\newcommand\blfootnote[1]{%
  \begingroup
  \renewcommand\thefootnote{}\footnote{#1}%
  \addtocounter{footnote}{-1}%
  \endgroup
}
\newcommand*{\rom}[1]{\expandafter\@slowromancap\romannumeral #1@}
\DeclareFontFamily{U}{tipa}{}
\DeclareFontShape{U}{tipa}{m}{n}{<->tipa10}{}
\newcommand{\arc@char}{{\usefont{U}{tipa}{m}{n}\symbol{62}}}%
\newcommand{\arc}[1]{\mathpalette\arc@arc{#1}}
\newcommand{\arc@arc}[2]{%
  \sbox0{$\m@th#1#2$}%
  \vbox{
    \hbox{\resizebox{\wd0}{\height}{\arc@char}}
    \nointerlineskip
    \box0
  }%
}
\newcommand{\R}{\mathbb{R}}
\newcommand{\Cc}{\mathcal{C}}
\newcommand{\Ss}{\mathcal{S}}
\newcommand{\dd}{\, \textrm{d}}
\newcommand{\loc}{\textrm{loc}}
\newcommand{\pa}{\partial}		
\newcommand{\dom}{\mathcal{D}}
\newcommand{\ub}{\bar{u}}
\theoremstyle{plain}
\newtheorem{theorem}{Theorem}
\theoremstyle{plain}
\newtheorem{intdef}{Definition}
\newenvironment{definition}[1]{\begin{intdef}[#1]\label{def:#1}}{\end{intdef}}
\theoremstyle{plain}
\newtheorem{prop}{Proposition}
 \newtheorem{corollary}{Corollary}
 \newtheorem{lemma}{Lemma}
\newtheorem*{prop*}{Proposition}
\newtheorem{remark}{Remark}
\newtheorem{example}{Example}
\numberwithin{equation}{section} 
\title{Partial H\"{o}lder Regularity for Solutions of a Class of Cross-Diffusion Systems with Entropy Structure}
\author{
Marcel~Braukhoff$^{a}$
\and
Claudia~Raithel$^{b,*}$
\and
Nicola~Zamponi$^{c}$
}
\date{\small{ 
    $^a$ Email: marcel.braukhoff$@$hhu.de\\
$^b$ Technische Universit\"at Wien, Wiedner Hauptstr. 8-10, 1040 Wien, Austria\\
Email: claudia.raithel$@$tuwien.ac.at; *corresponding author\\
$^c$ Technische Universit\"at Wien, Wiedner Hauptstr. 8-10, 1040 Wien, Austria\\
Email: nicola.zamponi$@$asc.tuwien.ac.at}}
\begin{document}

\maketitle

\selectlanguage{english}
\begin{abstract} In this article we show a $C^{0,\alpha}$-partial regularity result for solutions of a certain class of cross-diffusion systems with entropy structure. Under slightly more stringent conditions on the system, we are able to obtain a $C^{1,\alpha}$-partial regularity result. Amongst others, our results yield the partial $C^{1,\alpha}$-regularity of weak solutions of the Maxwell-Stefan system, as well as the partial $C^{1,\alpha}$-regularity of bounded weak solutions of the Shigesada-Kawasaki-Teramoto model. The classical partial regularity theory for nonlinear parabolic systems as developed by Giaquinta and Struwe in the 80s proceeds by Campanato iteration which relies on energy methods. Our analysis here centers around the insight that, in the Campanato iteration strategy, we can replace the use of energy estimates by ``entropy dissipation inequalities'' and the use of the squared $L^2$-distance to measure the distance between functions by the use of the ``relative entropy''.  In order for our strategy to work, it is necessary to regularize the entropy structure of the cross-diffusion system, thereby introducing a new technical tool, which we call the ``glued entropy''.
\end{abstract}

\vspace{.2cm}

\textit{Keywords:} Cross-diffusion systems, entropy methods, partial H\"{o}lder regularity, Maxwell-Stefan system, Shigesada-Kawasaki-Teramoto model\\

\textit{Declarations of interest:} None.


\blfootnote{$\dagger$ 2020 Mathematics Subject Classification: 35B65, 35K65, 35K59, 35Q92, 92D25} 

\blfootnote{$\ddagger$ The authors gratefully acknowledge support from the Austrian Science Fund (FWF), grants P30000, P33010, W1245, and F65. The third author would like to gratefully acknowledge the support of the Alexander von Humboldt Foundation (AvH) and the bilateral Croatian-Austrian Project of the Austrian Agency for International Cooperation in Education and Research (\"OAD), grant HR 19/2020. The authors would also like to gratefully acknowledge the support of the bilateral Czech-Austrian project of the \"OAD, grant CZ 15/2018.}


\section{Introduction}

In this paper we are interested in the partial H\"{o}lder regularity of weak solutions of cross-diffusion systems, which are reaction-diffusion systems with non-diagonal diffusion coefficients. Systems of this type find application in many areas, including in the modelling of gaseous or fluid mixtures \cite{M_1866,S_1871}, the dynamics of competing subpopulations \cite{SKT_79}, or in the study of tumour growth \cite{JB_2002}. Formally, cross-diffusion systems have the form
\begin{align}
\label{cross_diffusion_system}
\partial_t u_i - \sum_{j=1}^n\nabla \cdot A_{ij}(u) \nabla u_j & = f_i(u) \quad  \textrm{in} \quad \Omega \times (0,T);
\end{align}
the components $u_i$, for $i = 1, \ldots , n$, are interpreted as chemical or population densities, the interactions of which are governed by the diffusion coefficients $A_{ij}(u)$ and the reaction terms $f_i(u)$. We assume that $\Omega \subset  \mathbb{R}^d$, for $d \geq 2$, is a bounded smooth domain and $T >0$. In the sequel, we abbreviate \eqref{cross_diffusion_system} as 
\begin{align}
\label{cross_diffusion_system_2}
\partial_t u - \nabla \cdot A(u) \nabla u & = f(u) \quad  \textrm{in} \quad \Omega \times (0,T),
\end{align}
where $u = (u_1, ..., u_n)$ and $(A(u))_{ij} = A_{ij}(u)$.

As the diffusion matrix $A(u)$ is neither assumed to be symmetric nor positive definite, the issue of obtaining \textit{a priori} estimates for solutions of \eqref{cross_diffusion_system} can be rather delicate. In particular, without further insight, the standard energy methods that are classically used to obtain partial H\"older regularity results in the context of nonlinear parabolic (or elliptic) systems are not applicable in the setting of \eqref{cross_diffusion_system}. In order to overcome this difficulty, in this paper we restrict ourselves to the class of cross-diffusion systems with an \textit{entropy structure} --The simplest version of such an entropy structure is when there exists a convex domain $\dom \subseteq \R^n_+$, $\overline{\dom}$ containing the range of $u$, and a convex function $h: \dom \rightarrow \R$ such that $h^{\prime \prime} A : \dom \rightarrow \R^{n \times n}$ satisfies
\begin{align}
\label{main_entropy_condition_non}
\rho \cdot h^{\prime \prime} (y) A(y) \rho \geq \lambda |\rho|^2 \quad \textrm{for some } \lambda >0 \textrm{ and any }  y \in \dom \text{ and } \rho \in \R^n.
\end{align}
This function $h$ is called the \textit{entropy density} and $h^{\prime \prime}$ denotes the $n \times n$-dimensional Hessian.

The presence of an entropy structure is useful in the analysis of cross-diffusion systems because it gives one access to an \textit{entropy dissipation inequality}. In particular, defining the entropy as $\mathcal{H}(u) := \int_{\Omega} h(u) \dd x$, we then have that
\begin{align}
\label{entropy_estimate}
\begin{split}
\partial_t \mathcal{H}[u]  = \int_{\Omega} \partial_t u \cdot h^{\prime}(u) \dd x & = - \int_{\Omega} \nabla u : h^{\prime \prime} (u)A(u)\nabla u \dd x + \int_{\Omega} f(u) \cdot h^{\prime}(u) \dd x\\
& \leq -\lambda \int_{\Omega} |\nabla u|^2 \dd x + \int_{\Omega} f(u) \cdot h^{\prime}(u) \dd x.
\end{split}
\end{align}
The relation \eqref{entropy_estimate} implies that when $ \int_{\Omega} f(u)  \cdot  h^{\prime}(u) \leq 0$  and $ \mathcal{H}(u_0) <\infty$, then $\mathcal{H}$ is a Lyapunov functional for \eqref{cross_diffusion_system}.


In the analysis of cross-diffusion systems with entropy structure, the estimate \eqref{entropy_estimate} often plays a similar role to that of the standard energy estimate in the analysis of parabolic systems that satisfy a positive definiteness condition. This can be seen, e.g., in the existence theory for global weak solutions via the boundedness-by-entropy method. This method was first developed by Burger, Di Francesco, Pietschmann, and Schlake for a 2-species diffusion model with size-exclusion \cite{BFPS_2010} and then generalized to a more broad setting by J\"ungel in \cite{J_2014}. Here, the strategy for obtaining global weak solutions is to do a twofold regularization of \eqref{cross_diffusion_system}: first discretizing the time derivative with a first-order implicit Euler scheme and then adding vanishing viscosity and massive terms. As is seen in \cite[Lemma 5]{J_2014}, the regularized equations can then be solved using a Lax-Milgram argument --one then passes to the limit in the regularization via uniform estimates that are obtained from the entropy dissipation.

Similarly to the replacement of energy estimates by entropy estimates, whenever classical methods in the regularity theory for parabolic systems would call for an estimate on the squared $L^2$-distance between two functions, we instead opt to compare them via the \textit{relative entropy}. The relative entropy is obtained by seeking an affine functional of $u$, $\ell(u)$, such that the quantity $\mathcal{H}[u]  -  \mathcal{H}[v]  -\ell(u)$ is nonnegative and takes its minimum value of $0$ at $u = v$. This yields the following definition for the relative entropy: 
\begin{align}
\label{re_functional}
 \mathcal{H}[u | v ]  :=  \mathcal{H}[u] -  \mathcal{H}[v] -  \langle  \mathcal{H}^{\prime}[v] , u-v \rangle,
\end{align}
where we mention that the \textit{relative entropy density} $h (\cdot\, |  v)$ is related to the entropy density $h$ as 
\begin{align}
\label{relative_entropy_density}
h (u \, | \, v) = h ( u ) - h(v) - \langle h^{\prime}(v) , u-v \rangle.
\end{align}
The relative entropy is well-suited for obtaining estimates in our context because it satisfies an estimate similar to \eqref{entropy_estimate} --an observation that has already been exploited, e.g., in the uniqueness theory for solutions of cross-diffusion or reaction-diffusion systems \cite{CJ_2017,JZ_2016,JZ_2017, J_2017} or to obtain (exponential) convergence rates to equilibrium \cite{CJM_2001}. 

While cross-diffusion systems with entropy structure have been the topic of much study in recent years --see the contributions already listed above, the survey article \cite{J_overview_2017}, or the book \cite{J_book}-- outside of certain examples or under very restrictive conditions on $A(u)$ in \eqref{cross_diffusion_system}, not much is known about the regularity of weak solutions. As the ultimate goal in much of the work on cross-diffusion systems is the existence of global solutions, the importance of H\"{o}lder regularity results may, e.g., be highlighted by a result of Amann \cite[Thm. 15.3]{A_book}, which links the extendability of a local solution to uniform in time bounds for certain spatial H\"{o}lder norms. In the current contribution our main goal is to show that we can, under natural assumptions --which we expand on below--, adapt classical methods to obtain partial H\"older regularity for bounded weak solutions of \eqref{cross_diffusion_system}, when there is an entropy structure. The naive game-plan is to, within the arguments of Giaquinta and Struwe \cite{GS_82}, replace the use of energy methods by entropy methods --i.e., we replace energy estimates by entropy estimates and the use of the squared $L^2$-distance by that of the relative entropy. Throughout the course of executing this strategy, we find that it is necessary to regularize the entropy structure --we call this regularized entropy the ``glued entropy''. 

The assumptions that we place on \eqref{cross_diffusion_system} are tailored in order to include as many examples of cross-diffusion systems with entropy structure as possible. To give some examples of ``admissible'' cross-diffusion systems, we remark that within our framework we are able to treat both examples of volume-filling systems and also of non volume-filling systems: On the volume-filling side, our methods yield the partial $C^{1,\alpha}$-regularity of weak solutions of the Maxwell-Stefan model (Example \ref{MS_model}) and also of weak solutions of the multi-species diffusion model with size exclusion that was recently studied by Hopf and Burger \cite{Hopf_Burger_2021} (Example \ref{HB_model}). On the non volume-filling side, we obtain partial $C^{1,\alpha}$-regularity for bounded weak solutions of the two-component Shigesada-Kawasaki-Teramoto (SKT) model (Example \ref{SKT}), for bounded weak solutions of the two-component semiconductor model (with electron-hole scattering) derived by Reznik \cite{R_95} (Example \ref{semiconductor}), and, finally, for bounded weak solutions of the  regularized version of the Patlak-Keller-Segel model in two dimensions studied in \cite{HJ_2011} (Example \ref{HJ_model}). We remark that it is shown in \cite{HJ_2011} that the additional cross-diffusion term that is added into the Patlak--Keller-Segel model prevents blow-up in the parabolic-elliptic model. 

\subsection{Overview}
In Section \ref{main_results}, we give the main results of this article: In particular, in Theorems \ref{theorem_MS} - \ref{Theorem_examples_5} we state the application of our partial regularity theory to five examples of cross-diffusion systems (already described above). In Section \ref{strategy_section}, we introduce the regularization of the entropy structure which we require in order to emulate the methods of Giaquinta and Struwe \cite{GS_82} --this is the ``glued entropy''. The definition of the ``glued entropy'' is contained in Section \ref{motivation}, a heuristic construction is described in Section \ref{construction}, and sufficient conditions for making this construction rigorous are given in Section \ref{sufficient_conditions}. In Section \ref{main_partial}, we give the most general version of our partial regularity results --the application of which results in Theorems \ref{theorem_MS} - \ref{Theorem_examples_5}. The argument for this, most general, version of our results is outlined in Section \ref{campanato}. It follows, in particular, through a Campanato iteration that is infused with entropy methods, which are applied w.r.t.\,the ``glued entropy''. In Section \ref{proof_Prop_1}, we prove that, under the conditions given in Section \ref{sufficient_conditions}, the construction of Section \ref{construction} yields a ``glued entropy'', in the sense of Section \ref{motivation}. In Section \ref{proofs_main_sec}, we give the proofs of Theorems  \ref{theorem_MS} - \ref{Theorem_examples_5}. In Sections \ref{poincare} - \ref{theorem_7_proof}, we perform the Campanato iteration that is outlined in Section \ref{campanato} --i.e., these sections contain the argument for the most general version of our regularity result.

\subsection{Notation} 
We will use the notation
\begin{align*}
\Lambda: = \Omega \times (- T,0), \, \, \, \textrm{where }\,  \Omega \subset \R^d \, \textrm{ and } \, T>0.
\end{align*}
Furthermore, a point $z_0 \in \Lambda$ can be decomposed as $z_0 = (x_0, t_0)$ for $x_0 \in \Omega$ and $t_0 \in (-T,0)$. For $R>0$ and a point $z_0 \in \Lambda$, we then let 
\begin{align*}
\Gamma_R(t_0) := ( t_0 - R^2, t_0) \, \, \,  \textrm{and} \, \, \, B_R(x_0) := \big\{x \in \R^d \, \vert \,  |x - x_0| <R \big\}.
\end{align*}
The corresponding parabolic cylinder is
\begin{align}
\Cc_R(z_0):= B_R(x_0) \times \Gamma_R(t_0)
\end{align}
with the parabolic boundary $\partial^P\Cc_R(z_0)$ of $\Cc_R(z_0)$ given by 
\begin{align}
\partial^P\Cc_R(z_0):=( B_R(x_0) \times \{t_0 - R^2\}) \cup ( \partial B_R(x_0) \times [t_0 - R^2, t_0)).
\end{align}
We also use the notation $\R^n_+ := \{ \rho \in \R^n : \rho_i > 0 \text{ for all } i = 1, \ldots, n \}$.

We use two notions and corresponding notations for the average of a function on a parabolic cylinder. The first notion is the standard one and, for a function $u$, radius $R>0$, and point $z_0 \in \Lambda$, is given by 
\begin{align}
(u)_{z_0, R} := \fint_{\Cc_R(z_0)} u \dd z = \frac{1}{|\Cc_R(z_0)|}\int_{\Cc_R(z_0)} u \dd z. 
\end{align}
The second notion is a weighted average: For a point $x_0 \in \Omega$ we introduce a cut-off function $\chi_{x_0} \in C^{\infty}_0(B_2(x_0))$ such that $\chi_{x_0} \equiv 1$ on $B_{1}(x_0)$ and $|\nabla \chi_{x_0}| \leq 2$.  Rescaling, we then let 
\begin{align}
\label{cut_off}
\chi_{x_0,R} := \chi_{x_0}\Big(\frac{\cdot}{R}\Big) \, \, \,  \textrm{for } R>0,
\end{align}
where we notice that $\chi_{x_0,R}$ is supported in $B_{2R}(x_0)$. This allows us to define the time-dependent weighted average on balls
\begin{align}
\label{weighted_averages}
(\tilde{u})_{x_0,R} (t) :=\frac{ \int_{B_{2R}(x_0)} u(x,t) \chi^2_{x_0,R} \dd x }{ \int_{B_{2R}(x_0)}  \chi^2_{x_0,R} \dd x }. 
\end{align}
This notation and the use of the weighted average is taken from \cite{GS_82}.

We use the notation $`` f \lesssim g "$ to denote $``f \leq C(d, n, A, \epsilon) g"$. Here $\epsilon>0$ is determined by the availability of a glued entropy density $h_{\epsilon}$, which we introduce in Section \ref{strategy_section}.

Throughout this paper, we use \textit{parabolic} H\"{o}lder spaces; i.e., they are defined in terms of the \textit{parabolic metric}
\begin{align}
\label{p_metric}
\delta(z_0, z_1) = \max\big\{|x_0 - x_1|, |t_0 - t_1|^{\frac{1}{2}} \big\}. 
\end{align}

We let $H^{k}(\cdot)$ for $k\geq0$ denote the $k$-dimensional Hausdorff measure defined in terms of $\delta$ given in \eqref{p_metric}. In particular, for $\Lambda \subset \R^{d+1}$ we have that 
\begin{align}
\label{hausdorff_measure}
H^{k}(\Lambda) = \lim_{ \epsilon \rightarrow 0 } \inf \big\{ \sum_{i}\delta(\Lambda_i)^{k} \, \vert \,  \Lambda \subset \cup_i \Lambda_i  \textrm{ and } \delta(\Lambda_i) < \epsilon \big\} ,
\end{align}
where $\delta(\Lambda_i)$ denotes the diameter of the set $\Lambda_i$ w.r.t\,the metric $\delta$.

We make use of the following subspaces of $\mathbb{R}^n$:
\begin{align}
\label{defn_subspace}
\Xi_0 := \Big\{ \rho \in \R^n \, :\, \sum_{i=1}^n \rho_i = 0\Big\} \quad \text{and} \quad \Xi_1 := \Big\{ \rho \in \R^n \, :\, \sum_{i=1}^n \rho_i = 1\Big\}.
\end{align}

\section{Main results}
\label{main_results} 

We first give the definition of a weak solution of \eqref{cross_diffusion_system}. Since we only work with nonnegative solutions, we implicitly include this property in the definition. 

The notion of weak solution of \eqref{cross_diffusion_system} that we use is as follows:
\begin{definition}{Weak solution}
\label{weak_solution} 
A {\em weak solution} to \eqref{cross_diffusion_system}  is a 
function $u: \Omega\times [0, T) \to\overline{\mathcal{D}}$ with
$u\in L^2(0,T; H^1(\Omega; \R^n))\cap L^{\infty}(0,T; L^2(\Omega; \R^n))$ and  $\pa_t u\in L^2(0,T; H^1(\Omega;\R^n)')$ such that 
\begin{align}
\label{weak_formulation}
\int_0^T\langle\pa_t u,\phi\rangle\, dt + 
\int_0^T\int_\Omega\nabla\phi : A(u)\nabla u\, dx\, dt
= \int_0^T\int_\Omega f(u)\cdot\phi\, dx\,dt,
\end{align}
for any $\phi\in L^2(0,T; H^1(\Omega; \R^n))$.
Here, $\langle\cdot,\cdot \rangle$ denotes the dual pairing of $H^1(\Omega)'$ and $H^1(\Omega)$.
\end{definition}
\noindent We point out that $u\in C^{0}([0,T]; L^2(\Omega; \R^n))$ thanks to \cite[Prop. 23.23]{Z_90} and, therefore, the strong $L^2(\Omega)$-limit $\lim_{t\to 0}u(\cdot,t)$ exists.\\

We now list five examples of systems to which our partial regularity theory may be applied and give the corresponding results. For practitioners interested in a cross-diffusion system not listed in this section, we refer them to Theorem \ref{Theorem_1} --this contains the most general description of the cross-diffusion systems to which our methods apply.

\subsection{Results for volume-filling systems}
\label{vol_fill_examples}

As already mentioned above, we apply our methods to two examples of volume-filling systems --the first, the Maxwell-Stefan model, we call ``implicit'', since, to write the system in the form \eqref{cross_diffusion_system}, we must invert a flux-gradient relation. Our second example, which is taken from a recent paper by Hopf and Burger \cite{Hopf_Burger_2021}, we call ``explicit'', since it is already written in the form \eqref{cross_diffusion_system}.\\

For volume-filling systems, we will always use $\dom = (0,1)^n$.

\subsubsection{Maxwell-Stefan system} 

We consider the Maxwell-Stefan model, one of the most well-studied cross-diffusion systems.

\begin{example}[Maxwell-Stefan model] \label{MS_model} The Maxwell-Stefan model describes the diffusive evolution of a multicomponent mixture --e.g., a gaseous or fluid mixture \cite{S_1871,M_1866}. The model is given by 
\begin{align}
\label{Maxwell_Stefan}
\partial_t u_i + \nabla \cdot J_i = f_i(u)\quad  \textrm{in} \quad \Omega \times (0,T), 
\end{align}
for $i = 1, \ldots, n$ and $T>0$. The components of $u$ represent the molar concentrations of the different species, which are related to the fluxes $J: \Omega \times (0,T) \rightarrow \R^{n \times d}$ by the relation 
\begin{align}
\label{flux_gradient_condition}
 \nabla u_i = -\sum_{j=1}^n \frac{u_j J_i - u_i J_j}{D_{ij}} = -\sum_{j=1}^n (M_{\rm{MS}}(u) )_{\textit{ij}}J_j,
\end{align}
where, for $i \neq j$, the interspecies diffusion coefficients satisfy $D_{ij} = D_{ji}$ and $D_{ij}>0$. One, furthermore, imposes the condition that 
\begin{align}
\label{volume_filling_constraint}
\sum_{i=1}^n J_i(y) = 0,
\end{align}
for any $y \in \dom$. The model is considered with no-flux boundary conditions, i.e. we have
\begin{align}
\label{boundary_conditions_MS}
 \nu \cdot J_i = 0 \quad  \textrm{on } \partial \Omega \times  (0, T), \quad \text{for } i= 1, \ldots, n,
\end{align}
and with a measurable initial condition $u_0$ satisfying 
\begin{align}
\label{ic_volume_filling}
u_0 \geq 0 \quad \text{and} \quad  \sum_{i=1}^n u_{0,i} = 1. 
\end{align}
\end{example}
Notice that under the additional assumption that the reproduction rates $f_i \in C^0(\dom; \R)$ satisfy 
\begin{align}
\label{reproduction_rates}
\sum_{i=1}^n f_i(y) = 0, \quad \text{for any} \, \, \,  y \in \dom,
\end{align}
the constraint \eqref{volume_filling_constraint}, in conjunction with \eqref{Maxwell_Stefan}, yields that 
\begin{align}
\label{volume_perserving}
\sum_{i=1}^n u_i(\, \cdot \,, t) = 1, \quad \text{for any } t\in (0,T).
\end{align}
When the components of $u$ are nonnegative, the condition \eqref{volume_perserving} clearly implies that each component is bounded. We remark that it is shown in \cite[Section 6]{B_2010} that if the $f_i$ satisfy a quasi-positivity condition ($f_i(u) \geq 0$ whenever $u_i =0 $ and $u_j \geq 0$ for $j\neq i$), then classical solutions of \eqref{Maxwell_Stefan} remain nonnegative as long as they exist. The nonnegativity of the weak global solutions of \eqref{Maxwell_Stefan} that are constructed via the boundedness-by-entropy method is a natural consequence of the method \cite{JS_2012}.

The advantage of using the Maxwell-Stefan approach to model the diffusive dynamics of a multispecies mixture, as opposed to the use of a Fickian model, is that it readily captures ``uphill diffusion'' --observed experimentally, e.g., in the 60s by Duncan and Toor (see \cite{DT_1962} or, for a description of the experiment, \cite[Section 2]{BGS_2012}). Mathematically, the reason that the Maxwell-Stefan model captures this behavior is due to the nonlinearity of the flux-gradient relation \eqref{flux_gradient_condition}. For a derivation of the Maxwell-Stefan system using interspecies force balances see \cite{B_2010} or \cite[Appendix A]{JS_2012}; for a derivation of the Maxwell-Stefan system via kinetic theory see \cite{BGS_2015}. 

The Maxwell-Stefan system has an entropy structure in the following sense: Using the standard Boltzmann entropy, i.e. letting
\begin{align}
\label{B_entropy}
h(u) =  \sum_{i=1}^n h_i(u_i)=u_1 ( \log(u_1) -1) + \ldots + u_n ( \log(u_n) -1),
\end{align}
we see that there exist $\lambda>0$ and $\delta\geq 0$ such that, for any $y \in \dom \cap \Xi_1$ --$\Xi_1$ defined in \eqref{defn_subspace}-- and $\rho \in \R^n$, it holds that 
\begin{align}
\label{hypocoercive_MS}
\rho \cdot h^{\prime \prime}(y) M_{\rm{MS}}(y) \rho \geq \lambda |\rho|^2 - \delta\Big( \sum_{i=1}^n \rho_i \Big)^2.
\end{align}
The relation \eqref{hypocoercive_MS} is, in fact, quite easy to see --going by \eqref{flux_gradient_condition}, for $i,j = 1, \ldots, n$, we have that 
\begin{align}
\label{M_MS}
(M_{\rm{MS}}(y))_{ij} =
\begin{cases}
\sum_{k= 1, k \neq i }^n  \frac{y_k}{D_{ik}} & \text{if } i = j,\\
- \frac{y_i}{D_{ij}} & \text{if } i \neq j.
\end{cases}
\end{align} 
Letting $D:= \max_{i,j = 1, \ldots, n, i \neq j} D_{ij}$, we may then write 
\begin{align}
\label{calc_MS}
\begin{split}
\rho \cdot h^{\prime \prime}(y) M_{\rm{MS}}(y) \rho &= \sum_{i,j = 1}^n \rho_i h_i^{\prime \prime}(y_i) (M_{\rm{MS}}(y))_{ij} \rho_j\\
 & = \frac{1}{2} \sum_{i,j =1}^n \frac{y_i y_j}{D_{ij}} \Big( \frac{\rho_i}{y_i} - \frac{\rho_j}{y_j}\Big)^2\\
& \geq \frac{1}{2D} \sum_{i,j = 1}^n y_i y_j \Big( \frac{\rho_i}{y_i} - \frac{\rho_j}{y_j}\Big)^2 \geq \frac{1}{D} \Big[  \sum_{i = 1}^n \rho_i^2 -  \Big( \sum_{i=1}^n\rho_i \Big)^2 \Big].
\end{split}
\end{align}
Notice that \eqref{hypocoercive_MS}, unlike the entropy condition \eqref{main_entropy_condition_non}, is only a hypocoercivity condition. 

While use of the Maxwell-Stefan system has permeated its way into many applied fields (including pulmonology \cite{C_1980, BGG_2010}), due to the many mathematical challenges inherent to the model, rigorous treatments are quite recent. Making some assumptions on the structure of the nonlinearities and that the initial data is close to equilibrium, Giovangigli proved the existence of unique global solutions to the whole-space Maxwell-Stefan system \cite[Theorem 9.4.1]{G_1999}. For general initial data, the existence of unique local  solutions is shown in \cite{B_2010}. In \cite{B_2010}, in order to use the classical local well-posedness theory of Amann \cite{A_1989,A_1990}, it is necessary to establish the invertibility of the flux-gradient condition \eqref{flux_gradient_condition} --This is done by using Perron-Frobenius theory to analyze the spectrum of $M_{\rm{MS}}(u)$ for positive $u$ satisfying \eqref{volume_perserving}. For $n=3$ and $D_{12} = D_{13}$, the existence and long-time behavior of unique global solutions is addressed in \cite{BGS_2012}. In this case, the 3-component system reduces to two equations: a heat equation and a drift-diffusion type equation. Combining the techniques of \cite{B_2010} with the machinery of the boundedness-by-entropy method, J\"ungel and Stelzer proved the existence of global weak solutions for general initial data and $n>1$; they also proved exponential convergence rates to equilibrium \cite{JS_2012}. We remark that the existence and long-time behavior results in \cite{JS_2012} are contingent on the additional condition that $\sum_{i=1}^n f_i(u) \log(u_i) \leq  0$ (with respect to the existence result this corresponds to the condition $(H3)$ in \cite{J_2014}, which, in turn, is related to a quasi-positivity condition on the $f_i$).  Following \cite{JS_2012}, in \cite{CEM_2020} a finite volume scheme is given that preserves many of the properties of the continuum Maxwell-Stefan model (including a discrete entropy-entropy dissipation inequality).\\

Our main result for the Maxwell-Stefan system is:

\begin{theorem}
\label{theorem_MS} 
Let $\alpha \in (0,1)$ and $u$ be a weak solution of the Maxwell-Stefan system; i.e. $u$ solves \eqref{Maxwell_Stefan} with the flux-gradient relation \eqref{flux_gradient_condition} and the volume-filling constraint \eqref{volume_filling_constraint}. Furthermore, let the initial data satisfy \eqref{ic_volume_filling} and $f \in C^0(\left[0,1\right]^n; \R^n)$ satisfy \eqref{reproduction_rates}. Then, there exists $\Lambda_0 \subseteq \Omega \times (0,T)$ with full $d+2$-dimensional parabolic Hausdorff measure (see \eqref{hausdorff_measure}) such that $u \in C^{1,\alpha}(\Lambda_0)$. In fact, the singular set, $\Omega \times (0,T) \setminus \Lambda_0$, is even of parabolic Hausdorff dimension less than $d - \gamma$ for some $\gamma>0$.
\end{theorem}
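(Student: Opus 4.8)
The plan is to deduce the theorem from the general partial regularity result of Section~\ref{main_partial} --- Theorem~\ref{Theorem_1} --- by checking that the Maxwell--Stefan system, with $\dom = (0,1)^n$ and the Boltzmann entropy \eqref{B_entropy}, falls within its scope. First I would dispose of boundedness and of the mass constraint. Summing the equations \eqref{Maxwell_Stefan} over $i$ and using the volume-filling constraint $\sum_i J_i \equiv 0$ from \eqref{volume_filling_constraint} together with \eqref{reproduction_rates} shows that $\partial_t \sum_i u_i = 0$ in the sense of distributions; since $u \in C^0([0,T];L^2(\Omega;\R^n))$ (as noted after Definition~\ref{weak_solution}) and the initial datum satisfies \eqref{ic_volume_filling}, this yields \eqref{volume_perserving}, i.e.\ $u(x,t) \in \Xi_1$ for a.e.\ $(x,t)$. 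Since $u$ is by definition $\overline{\dom}$-valued and nonnegative, it in fact takes values in the compact simplex $\overline{\dom} \cap \Xi_1 = \{\, y \in [0,1]^n : \sum_i y_i = 1 \,\}$, so $u$ is bounded, and differentiating the constraint in $x$ gives $\nabla u \in \Xi_0$ a.e.

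Next I would rewrite the system in the divergence form \eqref{cross_diffusion_system_2}. Both the flux $J$ (by \eqref{volume_filling_constraint}) and $\nabla u$ (by the previous step) take values in $\Xi_0$, and the Perron--Frobenius analysis of $M_{\rm{MS}}$ carried out in \cite{B_2010} shows that the restriction $M_{\rm{MS}}(y)|_{\Xi_0} \colon \Xi_0 \to \Xi_0$ is a linear isomorphism for every $y$ in the interior of $\dom \cap \Xi_1$, depending smoothly on $y$. Defining $A(y)$ to be its inverse --- extended in any convenient way to a complement of $\Xi_0$, which is immaterial because the equation only ever sees $A(u)\nabla u$ with $\nabla u \in \Xi_0$ --- the Maxwell--Stefan system \eqref{Maxwell_Stefan}--\eqref{flux_gradient_condition} with the no-flux conditions \eqref{boundary_conditions_MS} becomes \eqref{cross_diffusion_system_2} with homogeneous Neumann boundary conditions, and a weak solution of the Maxwell--Stefan system is a weak solution in the sense of Definition~\ref{weak_solution}.

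It then remains to supply the ``entropy input'' demanded by Theorem~\ref{Theorem_1}. With $h$ the Boltzmann entropy \eqref{B_entropy} one has $h''(y) = \mathrm{diag}(1/y_i)$, and the computation \eqref{calc_MS} gives the hypocoercivity \eqref{hypocoercive_MS} on $\dom \cap \Xi_1$; since $A(y) = (M_{\rm{MS}}(y)|_{\Xi_0})^{-1}$ and $\nabla u \in \Xi_0$, the defect term $\delta(\sum_i \rho_i)^2$ is inactive along solutions, so the entropy dissipation estimate \eqref{entropy_estimate} holds with a genuine ellipticity constant --- one which, however, degenerates as $y$ approaches $\partial\dom$. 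The companion ingredient for the Campanato scheme of Section~\ref{campanato} --- comparison in relative entropy in place of the squared $L^2$-distance --- is furnished by the relative Boltzmann entropy density $h(u\,|\,v) = \sum_i (u_i \log(u_i/v_i) - u_i + v_i)$, together with the two-sided bound $h(u\,|\,v) \gtrsim |u-v|^2$ on the compact set $\overline{\dom} \cap \Xi_1$ (a Csisz\'ar--Kullback--Pinsker-type inequality). Because $h''$ blows up, and $M_{\rm{MS}}$ simultaneously loses ellipticity, as some $y_i \to 0$, the iteration cannot be carried out with $h$ itself; instead one runs it with the regularized ``glued entropy'' $h_\epsilon$ of Section~\ref{strategy_section}. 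Thus the last thing to check is that the Boltzmann entropy meets the sufficient conditions of Section~\ref{sufficient_conditions} --- which fixes some $\epsilon > 0$ --- as well as the slightly stronger hypotheses needed for the $C^{1,\alpha}$ (rather than merely $C^{0,\alpha}$) conclusion; the remaining requirement, $f \in C^0([0,1]^n;\R^n)$ satisfying \eqref{reproduction_rates}, is assumed. Granting these verifications, Theorem~\ref{Theorem_1} produces a set $\Lambda_0 \subseteq \Omega \times (0,T)$ of full $(d+2)$-dimensional parabolic Hausdorff measure with $u \in C^{1,\alpha}(\Lambda_0)$, and with $\dim_{\mathcal H}(\Omega \times (0,T) \setminus \Lambda_0) < d - \gamma$ for some $\gamma > 0$.

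The two genuinely Maxwell--Stefan-specific points --- the places where the argument is more than a black-box appeal to Theorem~\ref{Theorem_1} --- are, first, carrying the linear constraint $u \in \Xi_1$ through the Campanato iteration, since the frozen-coefficient comparison problems and the affine corrections used there need not preserve $\sum_i u_i = 1$, so that the passage from the hypocoercivity \eqref{hypocoercive_MS} to an honest ellipticity estimate must be done with care; and, second, verifying the glued-entropy sufficient conditions of Section~\ref{sufficient_conditions} near $\partial\dom$, where the blow-up of $h''$ and the degeneracy of $M_{\rm{MS}}$ occur together. I expect this second point --- showing that the Boltzmann entropy admits a glued entropy $h_\epsilon$ with all the quantitative properties required --- to be the main obstacle, and it is precisely what the glued-entropy construction of Section~\ref{strategy_section} is built to overcome.
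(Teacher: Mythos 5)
Your overall strategy is exactly the paper's: establish \eqref{volume_perserving}, invert the flux--gradient relation on $\Xi_0$ to put \eqref{Maxwell_Stefan} in the form \eqref{cross_diffusion_system_2} with $A_{\rm{MS}} = (\tilde{M}_{\rm{MS}})^{-1}$, and then feed the resulting ``explicit'' volume-filling system into Theorems \ref{Theorem_1} and \ref{higher_reg} via the glued entropy. The problem is that your write-up stops at the point where the paper's proof actually begins: everything after ``Granting these verifications'' is conditional on checks that you identify as the crux but never perform. Concretely, three things are missing. First, the sufficient condition \hyperlink{H3p}{(\textbf{H3}$^{\prime}$)}-type hypothesis must be verified for $M_{\rm{MS}}$, i.e.\ you must exhibit functions $m_i$ with $\inf_{\overline{\dom}} m_i > 0$ such that $|(M_{\rm{MS}})_{ij}(y) - m_i(y)\delta_{ij}|\,|h_i''(y_i)| \lesssim 1$ on $\overline{\dom}\cap\Xi_1$; the paper does this with an explicit choice of $m_i$ built from the products $\prod_{k\neq i,j} D_{ik}$, and this computation is the quantitative content behind your phrase ``the main obstacle.'' Second, the glued-entropy coercivity obtained this way is for $h_\epsilon'' M_{\rm{MS}}$, not for $h_\epsilon'' A_{\rm{MS}}$; passing from one to the other is not automatic (coercivity of $h''M$ does not formally imply coercivity of $h''M^{-1}$) and the paper devotes a separate proposition to it, substituting $\xi = A(y)\rho \in \Xi_0$ and using the uniform bound $\|M_{\rm{MS}}(y)\|_2 \lesssim 1$ from \hyperlink{H0}{(\textbf{H0})} to recover $|\rho|^2 \lesssim |\xi|^2$. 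Your remark that ``the defect term is inactive along solutions'' only handles the hypocoercivity defect for $M_{\rm{MS}}$ on $\Xi_0$; it does not address this inversion step. Third, the conclusion you are asked to prove is $C^{1,\alpha}$, which requires Theorem \ref{higher_reg} and hence $A_{\rm{MS}} \in C^{0,\sigma}(\overline{\dom})$; this in turn needs the uniform spectral bounds on the reduced matrix $M_0$ from \cite[Lemma 5]{JS_2012} and a Cramer's-rule argument, none of which appears in your proposal. Until these three verifications are carried out, the appeal to Theorems \ref{Theorem_1} and \ref{higher_reg} is an announcement of a proof rather than a proof.
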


\noindent This theorem may, e.g., by applied to the global weak solutions provided by J\"{u}ngel and Stelzer \cite{JS_2012}.

\subsubsection{Multi-species diffusion model with size exclusion} We consider the cross-diffusion model recently studied by Hopf and Burger \cite{Hopf_Burger_2021}.

\begin{example}[Multi-species diffusion with size exclusion] 
\label{HB_model}
This model is given by \eqref{cross_diffusion_system} with $f=0$ and the diffusion matrix 
\begin{align}
\label{A_HB}
(A_{\rm{HB}}(u))_{ij} =
\begin{cases}
\sum_{k=1,k\neq i}^n K_{ik}u_k &\text{if } i = j,\\
- K_{ij} u_i & \text{if } i \neq j,
\end{cases}
\end{align}
which --up to the replacement $K_{ij} = D_{ij}^{-1}$-- is the same as \eqref{M_MS}. It is assumed that $K_{ij}  \geq 0$, for $i,j =1, \ldots, n$, and that $K_{ij} =K_{ji}$. The model is considered with the no-flux boundary data 
\begin{align}
\label{no_flux_bd}
\nu \cdot A_{\rm{HB}}(u) \nabla u = 0 \, \, \, \textrm{on} \, \, \, \partial \Omega \times  (0, T),
\end{align}
with a measurable initial condition $u_0$ satisfying \eqref{ic_volume_filling}.
\end{example}

Notice that by adding the equations for the $u_i$ with \eqref{A_HB} and $f\equiv0$, the symmetry of the $K_{ij}$ implies that the model is mass-preserving (in the sense of \eqref{volume_perserving}).

Establishing the presence of an entropy structure similar to \eqref{hypocoercive_MS} requires further discussion concerning the non-degeneracy of the $K_{ij}$: In \cite{Hopf_Burger_2021} the existence of global weak solutions and their long-time behaviour is addressed under a rather weak assumption, called (H3), which postulates that at least one species interacts with all the others. The loosening of the standard non-degeneracy condition (``full interaction'': $K_{ij} >0$ for $i,j = 1, \ldots, n$) is a notable feature of their setting as the entropy condition \eqref{hypocoercive_MS} breaks down and the situation is much more delicate. Since in our work it is essential to have access to a condition of the form \eqref{hypocoercive_MS}, we cannot work with only the assumption (H3), and must be in the regime of ``full interaction''. In this case, the similarity between $M_{\rm{MS}}$ and $A_{\rm{HB}}$, shows there is an entropy structure in the sense of \eqref{hypocoercive_MS} with the entropy density \eqref{B_entropy}.  

 For a thorough literature review on systems of the above type we refer the reader to \cite[Section 1.2]{Hopf_Burger_2021}. Here, we mention only that local strong solutions were already provided by Amann \cite{A_1990} and that, under the assumption of ``full interaction'', the existence of global weak solutions has already been settled in \cite{J_2014}. Furthermore, in \cite{Hopf_Burger_2021} a weak-strong stability estimate is provided in the setting of ``full interaction''. Attempting to extend our partial regularity result to the admittedly more interesting setting of the assumption (H3) of Hopf and Burger, might be a interesting question for future investigation.\\

Our main result for the multi-species diffusion model with size exclusion from Example \ref{HB_model} is:

\begin{theorem}
\label{Theorem_examples_2} 
Let $\alpha \in (0,1)$ and $u$ be a weak solution of \eqref{cross_diffusion_system} with diffusion coefficients given by \eqref{A_HB} with $K_{ij} >0$, for $i,j = 1, \ldots, n$, and $f =  0$. Furthermore, assume that the initial data satisfies \eqref{ic_volume_filling}. Then, there exists $\Lambda_0 \subseteq \Omega \times (0,T)$ with full $d+2$-dimensional parabolic Hausdorff measure (see \eqref{hausdorff_measure}) such that $u \in C^{1,\alpha}(\Lambda_0)$.  In fact, the singular set, $\Omega \times (0,T) \setminus \Lambda_0$, is even of parabolic Hausdorff dimension less than $d - \gamma$ for some $\gamma>0$.
\end{theorem}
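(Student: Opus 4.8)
The plan is to deduce Theorem \ref{Theorem_examples_2} from the general partial regularity result (Theorem \ref{Theorem_1}), exactly as in the case of the Maxwell--Stefan system (Theorem \ref{theorem_MS}). The key observation, already recorded in the text, is that the diffusion matrix $A_{\rm{HB}}$ in \eqref{A_HB} coincides with $M_{\rm{MS}}$ in \eqref{M_MS} under the substitution $K_{ij} = D_{ij}^{-1}$, and hence in the regime of ``full interaction'' $K_{ij}>0$ the same entropy structure is available: with the Boltzmann entropy density $h$ from \eqref{B_entropy}, the computation \eqref{calc_MS} yields, for $y \in \dom \cap \Xi_1$ and $\rho \in \R^n$,
\begin{align}
\rho \cdot h^{\prime \prime}(y) A_{\rm{HB}}(y) \rho \;=\; \frac{1}{2} \sum_{i,j=1}^n K_{ij}\, y_i y_j \Big( \frac{\rho_i}{y_i} - \frac{\rho_j}{y_j}\Big)^2 \;\geq\; \lambda |\rho|^2 - \delta \Big( \sum_{i=1}^n \rho_i \Big)^2,
\end{align}
with $\lambda = (\min_{i\neq j} K_{ij})/n$ (say) and a suitable $\delta \geq 0$, i.e.\,the hypocoercivity condition \eqref{hypocoercive_MS} holds. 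So the first step is simply to transcribe this structural input.

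Second, I would check the remaining hypotheses of Theorem \ref{Theorem_1} for this example. The domain is $\dom = (0,1)^n$, which is convex; the relevant solutions are bounded since \eqref{ic_volume_filling} together with the mass-preservation identity (obtained, as noted, by summing the equations and using $K_{ij}=K_{ji}$, $f\equiv 0$) gives $\sum_i u_i(\cdot,t) = 1$ and hence $0 \leq u_i \leq 1$; the reaction term $f \equiv 0$ trivially satisfies the structural conditions (in particular $\sum_i f_i = 0$ and any sign condition such as $\sum_i f_i \log u_i \leq 0$), and it is smooth. One then needs a glued entropy density $h_\epsilon$ for this entropy structure; this is exactly the same construction as for Maxwell--Stefan, since the entropy density is the same Boltzmann entropy on the same domain $(0,1)^n$ and the matrix $A_{\rm{HB}}$ has the same algebraic form as $M_{\rm{MS}}$ — so the verification of the sufficient conditions of Section \ref{sufficient_conditions} carries over verbatim (indeed it may be carried out once and for all for all volume-filling examples). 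Feeding all of this into Theorem \ref{Theorem_1} produces the set $\Lambda_0 \subseteq \Omega\times(0,T)$ of full parabolic Hausdorff measure with $u \in C^{1,\alpha}(\Lambda_0)$, and the improved bound $\dim_{\mathcal H} (\Omega\times(0,T)\setminus\Lambda_0) < d-\gamma$.

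Third, there is a subtlety regarding the boundary condition: Example \ref{HB_model} is posed with the no-flux condition \eqref{no_flux_bd}, whereas the general result is (presumably) an interior statement, so the no-flux data plays no role in the interior partial regularity and can simply be disregarded — $\Lambda_0$ is a subset of the open cylinder. One should also note that $C^{1,\alpha}$ regularity here refers to the \emph{parabolic} Hölder space as fixed in the Notation section, and that the theorem holds for every $\alpha\in(0,1)$ because the general statement does.

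The main obstacle — to the extent there is one — is not analytic but bookkeeping: one must make sure that in the ``full interaction'' regime the glued-entropy construction of Section \ref{construction} genuinely satisfies the sufficient conditions of Section \ref{sufficient_conditions} with the Boltzmann entropy on $(0,1)^n$, i.e.\,that the hypocoercive degeneracy in the direction $\Xi_1^\perp = \operatorname{span}\{(1,\dots,1)\}$ is compatible with the machinery. But this is precisely the point that must already be established in order to prove Theorem \ref{theorem_MS}, so for Theorem \ref{Theorem_examples_2} it requires only the remark that replacing $D_{ij}^{-1}$ by $K_{ij}>0$ changes nothing, and the proof is complete.
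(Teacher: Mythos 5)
Your proposal is correct and follows essentially the same route as the paper: verify \hyperlink{H1}{(\textbf{H1})}, \hyperlink{H2p}{(\textbf{H2}$^{\prime}$)}, \hyperlink{H3p}{(\textbf{H3}$^{\prime}$)}, \hyperlink{H5}{(\textbf{H5})} and the volume-filling constraint \eqref{volume_perserving} by transferring the Maxwell--Stefan computations (since $A_{\rm{HB}}$ and $M_{\rm{MS}}$ agree up to renaming constants), and then apply the general partial regularity machinery. The only small imprecision is attributing the $C^{1,\alpha}$ conclusion to Theorem \ref{Theorem_1} alone: that theorem gives only partial $C^{0,\alpha}$-regularity, and the paper additionally invokes Theorem \ref{higher_reg} (which needs the, here trivial, observation that $A_{\rm{HB}}\in C^{0,\sigma}_{\rm{loc}}(\overline{\dom})$, as it is affine in $u$) together with Corollary \ref{Theorem_1_corollary} for the Hausdorff-dimension bound on the singular set.
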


\noindent We remark that it would be possible to replace the condition that $f = 0$ by $f$ satisfying \eqref{reproduction_rates} and $f \in C^0(\left[0,1\right]^n; \R^n)$, as in Theorem \ref{theorem_MS}.

\subsection{Results for non volume-filling systems}

We apply our methods to three examples of non volume-fillings systems: the 2-component SKT model for population dynamics, a semiconductor model with electron-hole scattering, and the Patlak-Keller-Segel model with an additional cross-diffusion term (in $d=2$). In general, the disadvantage in the application of our theory to non volume-filling systems is that the solutions are not naturally bounded; the partial regularity theory that we develop is only applicable to bounded weak solutions. \\

For non volume-filling systems the natural choice for $\dom$ is $\mathbb{R}^n_+$; since we only consider bounded solutions, for these we use $\dom = (0,d_1) \times \ldots \times (0,d_n)$ for some $d_1, \ldots, d_n >0$ (such that $\rm{Range}(\it{u}) \subseteq \overline{\dom}$).

\subsubsection{Shigesada-Kawasaki-Teramoto model}
 
We consider the 2-component SKT model for population dynamics --one of the prototypical examples of a cross-diffusion system. 
 
\begin{example}[Shigesada-Kawasaki-Teramoto model] \label{SKT}

This model is used to describe the evolution of interacting subpopulations --Here, we give it for $n=2$.  The model is given by \eqref{cross_diffusion_system} with the diffusion matrix
\begin{align}
\label{A_SKT}
   A_{\rm{SKT}}(u) = 
  \left[ {\begin{array}{cc}
   \alpha_{10} + 2\alpha_{11} u_1  + \alpha_{12} u_2&  \alpha_{12} u_1 \\
 \alpha_{21} u_2 &   \alpha_{20} + \alpha_{21} u_1  + 2\alpha_{22} u_2
  \end{array} } \right],
\end{align}
where we assume that each $\alpha_{ij}>0$. One usually considers this model with Lotka-Volterra source terms
\begin{align*}
f_i(u) = (\beta_{i0} - \beta_{i1} u_1 - \beta_{i2} u_2 )u_i \qquad \textrm{for} \quad   i = 1,2,
\end{align*}
where the $\beta_{ij} \geq 0$. The model is considered with no-flux boundary conditions (see \eqref{no_flux_bd}) and a nonnegative measurable initial condition. 
\end{example}

A calculation shows that the $2$-component SKT model has an entropy structure with 
\begin{align}
\label{entropy_SKT}
h(u) = \sum_{i=1}^2 h_i(u_i) = \frac{u_1}{ \alpha_{12} } (\log(u_1) - 1) + \frac{u_2}{\alpha_{21}} (\log(u_2) - 1)
\end{align}
and that there exists $\lambda>0$ such that 
\begin{align}
\label{entropy_condition_SKT}
\rho \cdot h^{\prime \prime}(y) A_{\rm{SKT}}(y) \rho \geq \lambda |\rho|^2,
\end{align}
for any $y  \in \mathbb{R}^2_+ $ and $\rho \in \R^2$. 


Local solutions of the SKT system were constructed by Amann \cite{A_1990}. Global solutions have only been constructed under rather restrictive assumptions on the form of $A_{\rm{SKT}}$ and the spatial dimension $d$. For an extensive literature review we point an interested reader to \cite{Yamada_2009}. One situation that has been well-studied is when $A_{\rm{SKT}}$ is upper-triangular --in particular, when $\alpha_{11} > 0$ and $\alpha_{21} =0$. This case has been treated, e.g., in \cite{LNW_1998, CLY_2003, CLY_2004, LNN_2003, P_2008} under additional restrictions on the spatial dimension. Restrictions on the spatial dimension were dropped by Hoang, Nguyen, and Phan in \cite{HNP_2015}.  In terms of weak solutions, in the case that $d=1$, these were constructed in  \cite{GGJ_2003}. This result was extended to arbitrary spatial dimension in \cite{CJ_2004,CJ_2006} --the second of these works considered the case without self-diffusion, i.e. assuming that $\alpha_{11} = \alpha_{22}  = 0$.  The SKT model also falls within those models treatable via the boundedness-by-entropy method (see \cite[Section 2.2]{J_2014} or \cite[Section 4.5]{J_book}). In \cite{JZ_2016} the third author and J\"{u}ngel derived sufficient conditions on the $\alpha_{ij}$ in order to obtain bounded weak solutions.

With respect to regularity results for the SKT model, we mention the work of Le and Le and Nguyen on the H\"older regularity of solutions to cross-diffusion systems. We first highlight \cite{D_2005} in which Le uses the classical methods of Giaquinta and Struwe \cite{GS_82} to obtain a partial regularity result for bounded weak solutions of the SKT system under the assumption of dominating self-diffusion. From our point of view, the reason that the results in \cite{D_2005} require dominating self-diffusion is that, to the best of our knowledge, the methods do not exploit the entropy structure of the model and rely rather on classical energy methods. Later, under similar assumptions on the $\alpha_{ij}$, the result in \cite{D_2005} was upgraded to everywhere H\"older regularity of bounded solutions in \cite{DN_2006}. Compared to \cite{D_2005}, in the current work we do not require any assumptions on the $\alpha_{ij}$ other than strict positivity.\\

Our main result for the SKT model is:

\begin{theorem}
\label{Theorem_examples_3} 
Let $\alpha \in (0,1)$. Furthermore, let $A_{\rm{SKT}}$ be given by  \eqref{A_SKT}, where we assume that $\alpha_{ij}>0$ for $i = 0,1,2$ and $j = 1,2$, and $f \in C^0(\left[0,1\right]^n; \R^n)$. For $u$, a bounded weak solution of the SKT model given by \eqref{cross_diffusion_system}, there then exists $\Lambda_0 \subseteq \Omega \times (0,T)$ with full $d+2$-dimensional parabolic Hausdorff measure (see \eqref{hausdorff_measure}) such that $u \in C^{1,\alpha}(\Lambda_0)$. In fact, the singular set, $\Omega \times (0,T) \setminus \Lambda_0$, is even of parabolic Hausdorff dimension less than $d - \gamma$ for some $\gamma>0$.
\end{theorem}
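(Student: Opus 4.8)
The plan is to deduce Theorem \ref{Theorem_examples_3} from the general partial regularity result, Theorem \ref{Theorem_1}, by checking that the two-component SKT system meets its hypotheses. Since $u$ is assumed to be a \emph{bounded} weak solution, we may fix $d_1,d_2>0$ with $\mathrm{Range}(u)\subseteq\overline{\dom}$ for $\dom:=(0,d_1)\times(0,d_2)$, which is a bounded convex subset of $\R^2_+$, as required for a non volume-filling example. The source term $f$ is continuous on $\overline{\dom}$ by hypothesis (and $A_{\rm{SKT}}$ is a polynomial in $u$, hence smooth), so the only genuinely structural input that must be supplied is the entropy structure together with a glued entropy.

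First I would record the entropy structure with $h$ as in \eqref{entropy_SKT}. One computes $h''(y)=\mathrm{diag}\!\big(\tfrac{1}{\alpha_{12}y_1},\tfrac{1}{\alpha_{21}y_2}\big)$, so $h$ is strictly convex on $\dom$, and, more notably, $h''(y)A_{\rm{SKT}}(y)$ turns out to be the \emph{symmetric} matrix
\[
\begin{pmatrix}
\dfrac{\alpha_{10}+2\alpha_{11}y_1+\alpha_{12}y_2}{\alpha_{12}y_1} & 1\\[3mm]
1 & \dfrac{\alpha_{20}+\alpha_{21}y_1+2\alpha_{22}y_2}{\alpha_{21}y_2}
\end{pmatrix}.
\]
To get the coercive entropy condition \eqref{entropy_condition_SKT} (i.e.\ \eqref{main_entropy_condition_non}) it then suffices to bound the smallest eigenvalue of this symmetric matrix from below on $\dom$. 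This is the short ``calculation'' alluded to in the text: expanding the product in the determinant, the cross term cancels the $-1$ exactly, leaving $\det(h''A_{\rm{SKT}})=\big(\text{sum of positive monomials in }y_1,y_2\big)/(\alpha_{12}\alpha_{21}y_1y_2)$, which is in fact bounded below by $4\alpha_{11}\alpha_{22}/(\alpha_{12}\alpha_{21})>0$; controlling this against the trace (which blows up like $1/y_i$ near the coordinate axes, matching the blow-up of the numerator) yields a uniform positive lower bound on $\lambda$. Here it is essential that \emph{all} $\alpha_{ij}>0$, which is exactly the standing assumption.

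The crux of the argument is to verify that the entropy density \eqref{entropy_SKT} admits a glued entropy $h_\epsilon$ in the sense of Section \ref{motivation}, i.e.\ that the sufficient conditions of Section \ref{sufficient_conditions} hold. One exploits that \eqref{entropy_SKT} is \emph{separable}, $h=h_1+h_2$, where each $h_i$ is a rescaled Boltzmann density on a bounded interval, with $h_i''(s)$ proportional to $1/s$: smooth and bounded away from $s=0$ but logarithmically degenerate as $s\to0^+$. This is precisely the situation the glued-entropy construction is designed to handle — one glues a regular (e.g.\ quadratic) profile of the second derivative onto the logarithmic one near $s=0$, retaining convexity, the monotonicity/structural properties demanded in Section \ref{sufficient_conditions}, and compatibility with the coercivity bound \eqref{entropy_condition_SKT}, while removing the degeneracy of $h''$ at the boundary. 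I expect checking these structural conditions to be the only nontrivial step; the boundedness of $u$ (so that $\overline{\dom}$ is a fixed compact box) makes the verification uniform, and everything else is bookkeeping.

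With the glued entropy in hand, Theorem \ref{Theorem_1} applies and delivers, for every $\alpha\in(0,1)$, a set $\Lambda_0\subseteq\Omega\times(0,T)$ of full $(d+2)$-dimensional parabolic Hausdorff measure with $u\in C^{1,\alpha}(\Lambda_0)$, together with the bound of $d-\gamma$ on the parabolic Hausdorff dimension of the singular set $\Omega\times(0,T)\setminus\Lambda_0$. Since positivity of the $\alpha_{ij}$ alone secured both \eqref{entropy_condition_SKT} and the glued-entropy conditions, no dominating-self-diffusion hypothesis (as in \cite{D_2005}) is needed, which is the point of the theorem.
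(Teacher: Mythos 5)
Your overall route is the same as the paper's: reduce Theorem \ref{Theorem_examples_3} to the general results by verifying the structural hypotheses for $A_{\rm{SKT}}$ with the entropy \eqref{entropy_SKT}. Your verification of the coercivity \eqref{entropy_condition_SKT} (condition (\textbf{H2})) via the symmetry of $h''A_{\rm{SKT}}$ and the determinant/trace bound is correct, and in fact more detailed than the paper, which simply quotes \eqref{entropy_condition_SKT}; (\textbf{H1}), (\textbf{H5}) and the H\"older continuity of $A_{\rm{SKT}}$ are indeed routine, as you say.

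The genuine gap is in the existence of the glued entropy. You assert that gluing a quadratic profile onto the Boltzmann one near $s=0$ ``retain[s] \ldots compatibility with the coercivity bound \eqref{entropy_condition_SKT}'' and leave the ``structural conditions'' of Section \ref{sufficient_conditions} as something you ``expect'' to check, but this compatibility is exactly the nontrivial point and it does not follow from (\textbf{H1}) and (\textbf{H2}) alone: once $h_{\epsilon,i}''$ is capped, the coercivity of $h_\epsilon''A$ must be re-derived, and the paper does this (Proposition \ref{glued_entropy}, part (i)) only under the additional hypothesis (\textbf{H3}), which requires functions $a_i\in C^0(\overline{\dom})$ with $\min_i\inf_{\overline{\dom}}a_i>0$ and $|A_{ij}(y)-a_i(y)\delta_{ij}|\,|h_i''(y_i)|\lesssim 1$, i.e.\ that near $\{y_i=0\}$ the $i$-th row of $A$ becomes diagonal, with strictly positive diagonal entry, at a rate matching the blow-up of $h_i''$. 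This is a condition on $A_{\rm{SKT}}$, not on $h$, and checking it is the one substantive step of the proof of Theorem \ref{Theorem_examples_3}: one takes $a_1(y)=\alpha_{10}+\alpha_{12}y_2$ and $a_2(y)=\alpha_{20}+\alpha_{21}y_1$, so that $\inf a_i\geq\alpha_{i0}>0$ by nonnegativity of the solution, while $|A_{11}-a_1|\,h_1''=2\alpha_{11}/\alpha_{12}$, $|A_{12}|\,h_1''=1$, and symmetrically for $i=2$, because the off-diagonal entries $\alpha_{12}y_1$ and $\alpha_{21}y_2$ vanish precisely at the rate at which $h_1''$, $h_2''$ blow up. Note that this is also where the positivity of $\alpha_{10},\alpha_{20}$ enters; in your write-up only $\alpha_{11},\alpha_{22}>0$ play a visible role (in the determinant bound). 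A minor additional point: Theorem \ref{Theorem_1} by itself only yields partial $C^{0,\alpha}$-regularity; the $C^{1,\alpha}$ conclusion and the bound $d-\gamma$ on the dimension of the singular set come from Theorem \ref{higher_reg} (using $A_{\rm{SKT}}\in C^{0,\sigma}_{\rm{loc}}$, which you did note) together with Corollary \ref{Theorem_1_corollary}.
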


\noindent Theorem \ref{Theorem_examples_3} yields, e.g., the partial $C^{1,\alpha}$-regularity of the solutions in \cite{JZ_2016}.

\subsubsection{Semiconductor model with electron-hole scattering}

We consider the semiconductor model derived by Reznik \cite{R_95}.

\begin{example}[Semiconductor model with electron-hole scattering] 
\label{semiconductor}  This is a model for the current flow through a semiconductor under the influence of strong electron-hole scattering (EHS). It is given by \eqref{cross_diffusion_system} with $n=2$ and 
\begin{align}
\label{A_semi}
   A_{\rm{SC}}(u) = \frac{1}{1 + \mu_2 u_1 + \mu_1 u_2} 
  \left[ {\begin{array}{cc}
  \mu_1 (1 - \mu_2 u_1)& \mu_1 \mu_2 u_1 \\
\mu_1 \mu_2 u_2 &   \mu_2(1 +\mu_1u_2)
 \end{array} } \right],
\end{align}
where $u_1$ and $u_2$ represent the electron and hole densities and $\mu_1, \mu_2>0$ are the mobility constants.  The model is considered with no-flux boundary conditions (see \eqref{no_flux_bd}) and a nonnegative measurable initial condition. 
\end{example}

For a physical discussion of the effect of strong EHS on the carrier transport in a semiconductor we refer the interested reader to, e.g., \cite{KS_1992, MRP_1987}. We remark that the above model has a kinetic derivation, starting from the semiconductor Boltzmann equation with a collision operator that incorporates strong EHS. The model was originally derived by Reznik in \cite{R_95}. For more details we refer the interested reader to \cite[Section 2.2]{J_2014}.

It can easily be checked that, for $h$ given by \eqref{B_entropy} (with $n=2$) and $\rho \in \R^2$, we have that 
\begin{align}
\label{entropy_structure_SC}
\rho \cdot h^{\prime \prime}(u) A_{\rm{SC}}(u) \rho = \frac{1}{1 + \mu_2 u_1 + \mu u_2} \Big( \frac{\mu_1}{u_1} \rho_1^2 + \frac{\mu_2}{u_2} \rho_2^2 + \mu_1 \mu_2 (\rho_1 + \rho_2)^2\Big)
\end{align}
Using this entropy structure, the existence of global nonnegative weak solutions has been shown in \cite{CJ_2007}.\\

Our main result for the semiconductor model is:

\begin{theorem}
\label{Theorem_examples_4} 
Let $\alpha \in (0,1)$ and $u$ be a bounded weak solution of \eqref{cross_diffusion_system} with diffusion coefficients given by \eqref{A_semi} and $f \in C^0(\left[0,1\right]^n; \R^n)$. Then, there exists $\Lambda_0 \subseteq \Omega \times (0,T)$ with full $d+2$-dimensional parabolic Hausdorff measure (see \eqref{hausdorff_measure}) such that $u \in C^{1,\alpha}(\Lambda_0)$.  In fact, the singular set, $\Omega \times (0,T) \setminus \Lambda_0$, is even of parabolic Hausdorff dimension less than $d - \gamma$ for some $\gamma>0$.
\end{theorem}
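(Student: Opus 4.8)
The plan is to deduce this theorem as a direct application of the general partial regularity result, Theorem~\ref{Theorem_1}, to the semiconductor system \eqref{A_semi}; all the work consists in checking that the hypotheses of that theorem are met, the only genuinely model-specific input being the explicit entropy identity \eqref{entropy_structure_SC} together with the assumed a~priori boundedness of the weak solution.

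First I would fix the entropy structure. Since $u$ is a bounded weak solution, choose $d_1,d_2>0$ with the range of $u$ contained in $\overline{\dom}$, where $\dom:=(0,d_1)\times(0,d_2)$, and take $h$ to be the Boltzmann entropy \eqref{B_entropy} with $n=2$, so that $h''(y)=\mathrm{diag}(y_1^{-1},y_2^{-1})$. The coercivity \eqref{main_entropy_condition_non} then follows at once from the identity \eqref{entropy_structure_SC}: for $y\in\dom$ the denominator obeys $1<1+\mu_2 y_1+\mu_1 y_2<1+\mu_2 d_1+\mu_1 d_2=:c_0^{-1}$, while $\mu_i/y_i\geq\mu_i/d_i$, and dropping the nonnegative term $\mu_1\mu_2(\rho_1+\rho_2)^2$ gives
\[
\rho\cdot h''(y)A_{\mathrm{SC}}(y)\rho \;\geq\; c_0\Big(\tfrac{\mu_1}{d_1}\rho_1^2+\tfrac{\mu_2}{d_2}\rho_2^2\Big)\;\geq\;\lambda|\rho|^2,\qquad \lambda:=c_0\min\big\{\tfrac{\mu_1}{d_1},\tfrac{\mu_2}{d_2}\big\}>0,
\]
for all $y\in\dom$ and $\rho\in\R^2$. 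In contrast to the Maxwell--Stefan case, no hypocoercivity correction as in \eqref{hypocoercive_MS} is needed: the boundedness of $u$ turns the pointwise-in-$y$ identity \eqref{entropy_structure_SC} into a uniform ellipticity bound on the bounded domain $\dom$.

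Next I would verify the remaining structural hypotheses of Theorem~\ref{Theorem_1}. The matrix $A_{\mathrm{SC}}$ is a rational function of $u$ whose denominator $1+\mu_2 u_1+\mu_1 u_2$ is bounded away from zero on $\overline{\dom}$, hence $A_{\mathrm{SC}}\in C^{\infty}(\overline{\dom};\R^{2\times 2})$ and is bounded there, while $f$ is continuous by assumption. It remains to exhibit a glued entropy density $h_\eps$ in the sense of Section~\ref{motivation}; this is precisely the step that regularizes the degeneracy $h''(y)\to\infty$ as $y_i\to 0$ of the Boltzmann entropy at $\partial\dom$. Since $h$ splits as a sum $h(y)=h_1(y_1)+h_2(y_2)$ of one-dimensional strictly convex functions with $h_i''(s)=1/s$, the heuristic construction of Section~\ref{construction} applies and the sufficient conditions of Section~\ref{sufficient_conditions} are checked exactly as for the other examples carrying the Boltzmann entropy; in particular $h$ and $A_{\mathrm{SC}}$ are smooth enough for the $C^{1,\alpha}$ — rather than merely $C^{0,\alpha}$ — version of the conclusion. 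Applying Theorem~\ref{Theorem_1} then produces a set $\Lambda_0\subseteq\Omega\times(0,T)$ of full $(d+2)$-dimensional parabolic Hausdorff measure on which $u\in C^{1,\alpha}$, together with the stated bound on the parabolic Hausdorff dimension of the singular set.

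I do not expect any obstacle within this section itself: the difficulty lies entirely in proving Theorem~\ref{Theorem_1} and in the construction of the glued entropy (Sections~\ref{strategy_section} and \ref{main_partial}). Once those are available, the present theorem is bookkeeping — the only model-specific steps are the one-line verification of the coercive entropy structure above and the observation that splitting $h$ into scalar pieces puts the semiconductor model into the framework for which the glued entropy construction is carried out.
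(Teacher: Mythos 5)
Your overall route is exactly the paper's: verify \hyperlink{H1}{(\textbf{H1})}, \hyperlink{H2}{(\textbf{H2})}, \hyperlink{H3}{(\textbf{H3})}, \hyperlink{H5}{(\textbf{H5})} and the local H\"older continuity of $A_{\rm{SC}}$ on $\overline{\dom}$, then invoke Theorem \ref{Theorem_1} together with Theorem \ref{higher_reg} and Corollary \ref{Theorem_1_corollary}. Your verification of \hyperlink{H2}{(\textbf{H2})} from \eqref{entropy_structure_SC} is correct, as is the observation that no hypocoercivity correction is needed here, and the smoothness of $A_{\rm{SC}}$ on $\overline{\dom}$ is handled properly.

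The one place where you substitute an assertion for an argument is the glued-entropy step. You write that the sufficient conditions of Section \ref{sufficient_conditions} ``are checked exactly as for the other examples carrying the Boltzmann entropy,'' but \hyperlink{H3}{(\textbf{H3})} is precisely the condition that depends on the specific matrix $A$, not just on the entropy: one must exhibit functions $a_i$ with $\inf_{\overline{\dom}} a_i>0$ such that $|A_{ij}(y)-a_i(y)\delta_{ij}|\,|h_i''(y_i)|\lesssim 1$, and the correct choice for the SKT matrix does not carry over to \eqref{A_semi}. For the semiconductor model the paper takes
\begin{align}
a_1(y)= \frac{\mu_1}{1+ \mu_2 y_1 + \mu_1 y_2},  \qquad a_2(y)= \frac{\mu_2}{1+ \mu_2 y_1 + \mu_1 y_2},
\end{align}
and the point is that every entry of the $i$-th row of $A_{\rm{SC}}(y)-\mathrm{diag}(a_1(y),a_2(y))$ carries an explicit factor of $y_i$ (e.g. $A_{11}-a_1 = -\mu_1\mu_2 y_1/(1+\mu_2y_1+\mu_1y_2)$ and $A_{12}=\mu_1\mu_2 y_1/(1+\mu_2y_1+\mu_1y_2)$), which exactly cancels the blow-up $h_i''(y_i)=y_i^{-1}$, leaving the bounded quantity $\mu_1\mu_2/(1+\mu_2y_1+\mu_1y_2)$. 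This is a one-line computation, so the gap is easily filled, but since it is the only genuinely model-specific input to the existence of the glued entropy (via Proposition \ref{glued_entropy}), it should appear explicitly rather than by analogy.
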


\subsubsection{Regularized Patlak-Keller-Segel model}

We consider the regularized version of the Patlak-Keller-Segel model that is studied in \cite{HJ_2011}.

\begin{example}[Patlak-Keller-Segel model with additional cross-diffusion term ($d=2$)] 
\label{HJ_model} The Patlak-Keller-Segel model is used to describe chemotaxis --the migration of cells due to chemical gradients \cite{P_53, KS_70}. In \cite{HJ_2011}, Hittmeier and J\"ungel add an additional cross-diffusion term, thereby changing the entropy structure of the system in a substantial way. The modified model considered in \cite{HJ_2011} is given  by 
\begin{align}
\label{PKS}
\begin{split}
\partial_t u_1 - (\nabla \cdot( A_{\rm{HJ}}(u))_{11} \nabla u_1 + \nabla \cdot (A_{\rm{HJ}}(u))_{12} \nabla u_2)&  = f_1(u)\\
\beta \partial_t u_2 - (\nabla \cdot (A_{\rm{HJ}}(u))_{21} \nabla u_1 + \nabla \cdot (A_{\rm{HJ}}(u))_{22} \nabla u_2)&  = f_2(u),
\end{split}
\end{align}
with 
\begin{align}
\label{A_HJ}
   A_{\rm{HJ}}(u) =
  \left[ {\begin{array}{cc}
  1& -u_1 \\
  \delta & 1   
 \end{array} } \right] \quad \text{and} \quad f(u) = \left( \begin{array}{c} 0 \\ \mu u_1 - u_2 \end{array} \right).
\end{align}
Here, $u_1$ represents the cell density and $u_2$ is the chemical signal concentration, $\beta \geq 0$ is a time parameter, $\mu>0$ is the production rate (of the chemical by the cells), and $\delta>0$. The model is considered for $\Omega \subset \R^2$.  The additional cross-diffusion term in \eqref{PKS} is $``\delta \Delta u_1"$ in the equation for $u_2$. When $\beta = 0$, \eqref{PKS} is called the ``parabolic-elliptic'' model. 

The model is considered with the no-flux boundary data 
\begin{align*}
\nu \cdot \nabla u = 0 \quad \text{on } \partial \Omega \times (0T),  
\end{align*}
with a nonnegative measurable initial condition. 
\end{example}

It is well-known (see, e.g., the review article \cite{H_2003}) that in the classical Keller-Segel model, i.e. when $\delta =0$ in \eqref{PKS}, if the initial cell density is large enough, then, for $d=2,3$, there is finite-time blow-up of the cell-density. For the case that $\Omega \subset \mathbb{R}^2$, it has been shown that there exist global solutions when $\int_{\Omega} u_1 < 4 \pi$ and that there is finite-time blow-up when  $\int_{\Omega} u_1 > 4 \pi$ \cite{NSY_1997}. The parabolic-elliptic model on the whole-space, $\mathbb{R}^2$, has been treated in \cite{BDP_2006,BCM_2008}. The situation that $\Omega \subset \mathbb{R}^3$ has been addressed, e.g., in \cite{CP_2006}, where the $L^{d/2}$-norm of the initial cell density plays the role of the mass from the $2$-dimensional case. 

In \cite{HJ_2011} the cross-diffusion term $``\delta \Delta u_1"$ is inserted into the equation for $u_2$ in order to prevent blow-up of the cell density. Prior to this, various other methods were introduced to prevent blow-up: changing the chemotactic sensitivity, modifying the form of the diffusive term in the equation for $u_1$, and having a model with non-vanishing birth-death --to avoid repetition, we refer the interested reader to \cite{HJ_2011} for a literature review of these techniques. In \cite{HJ_2011}, under some integrability assumptions on $u_0$, global weak solutions of \eqref{PKS} are constructed; in the parabolic-elliptic case these are bounded. In \cite{JLW_2019} the vanishing cross-diffusion limit of \eqref{PKS} is considered.

The methods in \cite{HJ_2011} rely on an entropy structure; i.e. that, for any $y\in \mathbb{R}^2_+$ and $\rho \in \R^2$, we have that 
\begin{align}
\label{entropy_cond_HJ}
\rho \cdot h^{\prime \prime}(y)  A_{\rm{HJ}}(y) \rho \geq \frac{ |\rho|^2}{\max(y_1, \delta)},
\end{align}
for the entropy density 
\begin{align}
\label{entropy_HJ}
h_{\rm{HJ}}(u) = u_1(\log(u_1) -1) +\frac{\beta}{2\delta} u_2^2.
\end{align}

Our main result for the Patlak-Keller-Segel model with additional cross-diffusion, \eqref{PKS}, is:

\begin{theorem}
\label{Theorem_examples_5} 
Let $\alpha \in (0,1)$ and $u$ be a bounded weak solution of \eqref{PKS}. Then, there exists $\Lambda_0 \subseteq \Omega \times (0,T) \subset \mathbb{R}^2 \times (0,T)$ with full $4$-dimensional parabolic Hausdorff measure (see \eqref{hausdorff_measure}) such that $u \in C^{1,\alpha}(\Lambda_0)$.  In fact, the singular set, $\Omega \times (0,T) \setminus \Lambda_0$, is even of parabolic Hausdorff dimension less than $2 - \gamma$ for some $\gamma>0$.
\end{theorem}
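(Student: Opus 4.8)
The plan is to deduce Theorem \ref{Theorem_examples_5} from the general partial regularity result announced as Theorem \ref{Theorem_1}, by verifying that the system \eqref{PKS} fits into the abstract framework developed for the Campanato iteration. The essential point is that \eqref{PKS}, despite the presence of the time-parameter $\beta$ on the second equation, can be rewritten in the form \eqref{cross_diffusion_system} (after dividing the second equation by $\beta$ when $\beta>0$, or treating the parabolic-elliptic case $\beta = 0$ separately), with diffusion matrix $A_{\rm{HJ}}(u)$ from \eqref{A_HJ} and source $f(u) = (0, \mu u_1 - u_2)$. First I would record the entropy structure: with $h_{\rm{HJ}}$ from \eqref{entropy_HJ}, the inequality \eqref{entropy_cond_HJ} shows that on $\dom = (0,d_1)\times(0,d_2)$ (with $d_1, d_2$ chosen so that $\mathrm{Range}(u)\subseteq\overline{\dom}$, which is legitimate since $u$ is assumed bounded) the matrix $h_{\rm{HJ}}''A_{\rm{HJ}}$ is coercive with constant $\lambda = (\max(d_1,\delta))^{-1} > 0$, i.e.\ the non-degenerate entropy condition \eqref{main_entropy_condition_non} holds. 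Thus the system is ``admissible'' in the sense required by Theorem \ref{Theorem_1}.

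The next step is to check the remaining structural hypotheses of Theorem \ref{Theorem_1} that are needed to run the glued-entropy construction of Section \ref{strategy_section}. This amounts to observing that $A_{\rm{HJ}}$ and $f$ are smooth (in fact polynomial) on $\overline{\dom}$, that $h_{\rm{HJ}}$ is $C^2$ and strictly convex on $\dom$ with the required growth/blow-up behaviour at the boundary $\{u_1 = 0\}$ (the $u_1\log u_1$ term supplies the needed logarithmic singularity, while the $u_2$-direction is uniformly convex), and that the sufficient conditions from Section \ref{sufficient_conditions} guaranteeing existence of a glued entropy density $h_\eps$ are met. Since the entropy here is of the separable form $h_1(u_1) + \frac{\beta}{2\delta}u_2^2$ with $h_1$ the Boltzmann density — exactly the type of entropy already handled for the Maxwell-Stefan and SKT examples — the verification should parallel those cases; the only genuinely new feature is the non-symmetric, $u$-dependent off-diagonal entry $-u_1$ in $A_{\rm{HJ}}$, but \eqref{entropy_cond_HJ} already absorbs this. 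One also notes that $f$ is bounded and Lipschitz on $\overline{\dom}$, so the reaction term satisfies whatever controlled-growth hypothesis Theorem \ref{Theorem_1} imposes.

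With admissibility established, Theorem \ref{Theorem_1} applies directly and yields a relatively closed set $\Lambda_0 \subseteq \Omega\times(0,T)$ of full $(d+2)$-dimensional parabolic Hausdorff measure — here $d = 2$, so full $4$-dimensional measure — on which $u \in C^{1,\alpha}$ for the given $\alpha\in(0,1)$, together with the dimension bound $\dim_{\mathcal H}(\Omega\times(0,T)\setminus\Lambda_0) < d - \gamma = 2 - \gamma$ for some $\gamma>0$. The parabolic-elliptic case $\beta = 0$ requires a brief separate remark: there the second equation is elliptic in $u_2$, so one either applies the version of the theory adapted to this degenerate time-structure or, more simply, uses that when $\beta = 0$ the solution is bounded (as recalled from \cite{HJ_2011}) and $u_2$ is then, for each fixed time, as regular as elliptic theory allows given $u_1$, feeding back into the parabolic estimate for $u_1$; I would phrase this as an application of the general result with the degenerate parabolic operator $\mathrm{diag}(1,\beta)\partial_t$, which the abstract framework is set up to accommodate.

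The main obstacle I anticipate is not in the regularity iteration itself — that is done once and for all in Theorem \ref{Theorem_1} — but in confirming that the \emph{glued entropy} construction of Sections \ref{motivation}--\ref{sufficient_conditions} goes through for $h_{\rm{HJ}}$, in particular that the regularization $h_\eps$ can be built while preserving a coercivity estimate of the form \eqref{main_entropy_condition_non} with a constant depending only on $\eps$, $\delta$, $d$, and the bound on $u$. The delicate interaction is between the logarithmic singularity of $h_1$ at $u_1 = 0$ and the $u_1$-dependence of the off-diagonal diffusion coefficient; one must check that the ``gluing'' near $\{u_1 = 0\}$ does not destroy \eqref{entropy_cond_HJ}. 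I expect this to be routine given the machinery of Section \ref{proof_Prop_1}, but it is the step that genuinely uses the specific structure of this example rather than a black-box citation.
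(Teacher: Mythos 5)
Your proposal follows the paper's own proof: one verifies (\textbf{H1}), (\textbf{H2}) (via \eqref{entropy_cond_HJ} together with the boundedness of $u$), (\textbf{H3}), (\textbf{H5}), and that $A_{\rm{HJ}}\in C^{0,\sigma}_{\rm{loc}}(\overline{\dom})$, and then cites the general theory; the gluing verification you defer as ``routine'' is indeed a one-liner, since $2\in\Ss_2$ (the quadratic $h_2$ needs no regularization) and, for $1\in\Ss_1$, the paper takes $a_1(y)=y_1+1$, so that $|A_{11}(y)-a_1(y)|\,h_1''(y_1)=y_1\cdot y_1^{-1}=1$ and $|A_{12}(y)|\,h_1''(y_1)=1$, which is exactly \eqref{hp.A}. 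The one correction: the $C^{1,\alpha}$ regularity and the bound on the parabolic Hausdorff dimension of the singular set come from Theorem \ref{higher_reg} and Corollary \ref{Theorem_1_corollary}, not from Theorem \ref{Theorem_1} alone (which yields only $C^{0,\alpha}$); the smoothness of $A_{\rm{HJ}}$ that you record is precisely the additional hypothesis those results require.
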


\noindent The main application of this theorem is to the solutions constructed in \cite{HJ_2011}. 



\section{Our strategy}
\label{strategy_section}

To obtain the results listed in Section \ref{main_results}, we inject the classical framework of Giaquinta and Struwe with entropy methods --in order to be successful, we find that we must ``regularize'' the entropy structure of the system. In particular, we replace the standard entropy density with a ``glued entropy density'', the point being that the glued entropy density still satisfies an entropy condition similar to \eqref{main_entropy_condition_non}, but also has a bounded Hessian. 

Given a cross-diffusion system of the form \eqref{cross_diffusion_system} with an entropy structure and a bounded weak solution, obtaining a partial regularity result via our methods has two steps: (1) Show that the entropy structure can be regularized (i.e., there exists a ``glued entropy''). And, (2) Apply a result of the following type:\\

\noindent \textbf{Heuristic result:} \textit{Let $u: \Omega \times (0,T) \rightarrow \overline{\dom}$ be a bounded weak solution of the cross-diffusion system \eqref{cross_diffusion_system}. Assuming that there exists a glued entropy, that $A$ is uniformly continuous, and that $f \in C^0(\overline{\dom} ; \R^n)$, the solution $u$ has partial $C^{0,\alpha}$-regularity for any $\alpha \in (0,1)$.}\\

\noindent This heuristic statement is formalized in Theorems \ref{Theorem_1} --under the assumption of the $C^{0,\sigma}$-regularity of $A$, we also obtain the partial $C^{1, \sigma}$-regularity of $u$ (see Theorem \ref{higher_reg}). 

In this section we focus on ``explicit'' volume-filling and non volume-filling systems, which are already in the form \eqref{cross_diffusion_system}. In order to handle ``implicit'' volume-filling systems with our methods, as we will see in Section \ref{implicit_systems}, we must assume that the flux-gradient relation is invertible (see \hyperlink{H0}{(\textbf{H0})} below), as is the case for the Maxwell-Stefan model (Example \ref{MS_model}).\\

\subsection{Derivation of the definition of the glued entropy}
\label{motivation}

Recall that the arguments of Giaquinta and Struwe in \cite{GS_82} rely on energy methods and that they require access to a Caccioppoli-type estimate for solutions of the nonlinear system, as well as for solutions of a corresponding ``frozen'' system. Here, we give the heuristics for obtaining a Caccioppoli-type estimate for a bounded weak solution $u$ of \eqref{cross_diffusion_system} assuming that there exists an entropy density such that \eqref{main_entropy_condition_non} holds (see, e.g., the non volume-filling Examples \ref{SKT}, \ref{semiconductor}, and \ref{HJ_model}) for $y \in \overline{\dom}$ and that $A \in L^{\infty}_{\text{loc}}(\mathbb{R}^n)$. 

\paragraph{Heuristic argument for Caccioppoli-type estimate satisfied by $u$:} Let $z_0 \in \Lambda$ and $R>0$ such that $\Cc_{2R}(z_0) \subset \Lambda$ and, for simplicity, assume that $f = 0$. The idea is to mimic the entropy estimate  
\eqref{entropy_estimate} applied to the relative entropy $\int_{\Omega} h(u \, | (\tilde{u})_{x_0, R}) \dd x$, but within the framework of the argument one usually uses to prove the standard Caccioppoli estimate for nonlinear parabolic systems (see, e.g., \cite[Lemma 2.1]{GS_82}). Notice that the weighted average $(\tilde{u})_{x_0, R}$ has been defined in \eqref{weighted_averages}. 

To mimic \eqref{entropy_estimate}, letting $\eta$ be a specific cut-off function for $\Cc_R(z_0)$ in $\Cc_{2R}(z_0)$, we take the time derivative of $\int_{\Omega} h(u \, |(\tilde{u})_{x_0, R})\eta^2 \dd x$. After some manipulations that are contained in the proof of Lemma \ref{nonlinear_cacc}, this yields that 
\begin{align}
\label{intermediate_cacc}
\begin{split}
 \int_{\Cc_{2R}(z_0)} \eta^2 |\nabla u|^2 \dd z & \lesssim \int_{\Cc_{2R}(z_0)} \eta^2 \nabla u : h^{\prime \prime}(u) A(u) \nabla u \dd z\\
 &\,\, \lesssim \frac{1}{R^2} \int_{\Cc_{2R}(z_0)} \Big(h(u \, |  (\tilde{u})_{x_0, R} ) +\sup_{y \in \overline{\dom}} | h^{\prime \prime}(y) |^2 |u-  (\tilde{u})_{x_0, R} |^2  \Big)\dd z.
 \end{split}
\end{align}
Here, we have used the boundedness of $u$ as $A(u) \lesssim 1$. To obtain a Caccioppoli-type estimate from \eqref{intermediate_cacc}, it would be helpful if 
\begin{align}
\label{conditions_for_glued}
h(u|b) \lesssim |u-b|^2 \textrm{ for any } b \in \overline{\dom}\qquad \textrm{and} \qquad \sup_{y \in \overline{\mathcal{D}}}h^{\prime \prime}(y) \lesssim 1.
\end{align}
Notice that when $h \in C^2(\overline{\dom})$, these two conditions are both satisfied via the definition \eqref{re_functional}. Of course, the conditions in \eqref{conditions_for_glued} do not hold for the Boltzmann entropy \eqref{B_entropy} --in particular, $h^{\prime \prime}(y)$ blows-up as $y \rightarrow 0$. 

\smallskip

\begin{remark}
Notice that in \eqref{intermediate_cacc} we have used that \eqref{main_entropy_condition_non} holds for $y \in \overline{\dom}$, whereas in, e.g., Examples \ref{SKT}, \ref{semiconductor}, and \ref{HJ_model} the entropy condition only holds for $y \in \dom$. This property is included in our definition of the glued entropy (below) and is satisfied due to the continuity properties of the construction provided in Section \ref{construction}.
\end{remark}

\smallskip

Motivated by the above discussion we introduce the following definition:

\begin{definition}{Glued entropy density} 
\label{glued_def}
For a cross-diffusion system of the form \eqref{cross_diffusion_system} and $\dom \subseteq \R_+^n$ convex, a nonnegative convex function $h_{\epsilon}: \overline{\dom} \rightarrow \R$ is a \textit{glued entropy density} if:\\

$\bullet$ \textit{(non volume-filling systems)} The following two conditions hold:

\medskip
\begin{itemize}[leftmargin=.7in]
\item[\hypertarget{C1}{\textbf{(C1)}}] 
There exists $\lambda>0$ such that, for any $\rho \in\mathbb R^n$ and $y \in \overline{\mathcal{D}}$, we have that 
 \begin{align*}
	\rho \cdot h_\epsilon''(y)A(y) \rho \geq \lambda |\rho|^2.
	\end{align*}

\item[\hypertarget{C2}{\textbf{(C2)}}]  There exists $\lambda^{\prime}>0$ such that, for any $\rho \in \R^n$ and $y \in \overline{\mathcal{D}}$, it holds that
 \begin{align}
 \label{positive_entries}
	\rho \cdot h_\epsilon''(y) \rho \geq \lambda^{\prime} |\rho|^2;
	\end{align}
furthermore, for $i,j = 1, \ldots, n$, $\lambda^{\prime} \leq |(h_{\epsilon}^{\prime \prime} (y))_{i,j} |\lesssim 1$. 
\end{itemize}

\medskip

$\bullet$  \textit{(``explicit'' volume-filling systems)} The condition \hyperlink{C2}{(\textbf{C2})} holds and 
\medskip
\begin{itemize}[leftmargin=.7in]
\item[\hypertarget{C1p}{\textbf{(C1$^{\prime}$)}}] 
There exists $\lambda>0$ such that, for any $\rho \in \Xi_0$ and $y \in \overline{\dom}\cap \Xi_1$ (see \eqref{defn_subspace}), we have that 
 \begin{align*}
	\rho \cdot h_\epsilon''(y)A(y) \rho \geq \lambda |\rho|^2.
\end{align*}
\end{itemize}

\end{definition}

\noindent We remark that no separate definition is given for the ``implicit'' volume-filling case, since under the assumption \hyperlink{H0}{(\textbf{H0})} in Section \ref{implicit_systems} we may, in fact, rewrite these as ``explicit'' systems.

\medskip

We explain the condition \hyperlink{C1p}{(\textbf{C1}$^{\prime}$)} for the case of a volume-filling system: Notice that, since in this case no entropy condition of the form \eqref{main_entropy_condition_non} holds, \textit{a priori} the computation \eqref{intermediate_cacc} is not applicable to these systems. In particular, in \eqref{intermediate_cacc}, we must replace the use of the coercivity condition \eqref{main_entropy_condition_non} by, e.g., the hypocoercivity condition \eqref{hypocoercive_MS} --This turns out to be easy, after making the observation that in \eqref{intermediate_cacc} we only use the entropy condition with vectors $\rho_i = (\partial_i u_1, \ldots, \partial_i u_n)$, for $i = 1, \ldots, n$, and that, thanks to \eqref{volume_perserving},  these $\rho_i$ are contained in $\Xi_0$. Furthermore, the entropy condition is applied for $y = u \in \overline{\dom} \cap \Xi_1$. For $y \in \overline{\dom} \cap \Xi_1$ and $\rho \in \Xi_0$, the conditions \eqref{main_entropy_condition_non} and \eqref{hypocoercive_MS} coincide.

\bigskip 

We remark that the condition \hyperlink{C2}{(\textbf{C2})} implies that, for $u,v \in L^2(0,T; H^1(\Omega; \overline{\dom}))$, the relation
\begin{align}
\label{compare}
|u - v|^2 \lesssim h_{\epsilon}(u \vert v) \lesssim  |u- v|^2
\end{align}
holds. Here, $h_{\epsilon}(u  \vert v)$ represents the relative entropy density induced by the glued entropy density $h_{\epsilon}$; i.e. 
\begin{align}
\label{relative_entropy_ep}
h_\epsilon(u\vert v ) := h_\epsilon(u) - h_\epsilon( v )
- \langle h_\epsilon'(v) ,  u - v\rangle .
\end{align}

\subsection{Construction of the glued entropy}
\label{construction}

For our construction, we make certain assumptions on the entropy density of the cross-diffusion system:

\begin{itemize}[leftmargin=.5in]
\item[\hypertarget{H1}{\textbf{(H1)}}] The entropy density $h:\mathcal D\to[0,\infty)$ has the form
	\begin{align}
	\label{entropy}
	h(y):=\sum_{i=1}^nh_i(y_i),
	\end{align}
for $y \in \mathcal{D}$ (with $y_i = y \cdot e_i$), and for $h_i \in C^{2}( (0, d_i) )$ nonnegative and convex. (Recall that $\dom =(0,1)^n$ in the volume-filling case and $\dom =(0, d_1) \times \ldots \times (0,d_n)$, for some $d_i >0$, $i = 1, \ldots, n$, in the non volume-filling case.)  

For $i = 1, \ldots, n$, we assume that either: (\textbf{Case 1}) $h_i^{\prime \prime} (y_i) \rightarrow \infty$ monotonically as $y_i \rightarrow 0$ in such a way that there exists $C \in \R$ for which $h_{i}^{\prime \prime} (\epsilon) \leq C h_{i}^{\prime \prime} (2 \epsilon)$ holds for any $\epsilon >0$, or (\textbf{Case 2}) $h_i''$ is bounded on $[0,d_i]$ in the sense of \hyperlink{C2}{(\textbf{C2})}. 
\end{itemize}

Clearly, under the structural assumptions on the entropy in \hyperlink{H1}{(\textbf{H1})} the two conditions in \hyperlink{C2}{(\textbf{C2})} are equivalent. Furthermore, we remark that in all of the examples in Section \ref{main_results}, except Example \ref{HJ_model}, the Boltzmann entropy is used, and, for $i = 1, \ldots, n$, $h_i''$ blows-up as required in Case 1 above. Case 2 is only included to handle the quadratic $h_2$ in \eqref{entropy_HJ}.

For convenience, we introduce the notations:
\begin{align}
\label{case_1_S}
\mathcal{S} = \left\{1, \ldots, n \right\}, \quad  \mathcal{S}_{1}  = \left\{ i \in \mathcal{S} \, : \, \text{Case 1 in \hyperlink{H1}{(\textbf{H1})} holds}  \right\}, \quad \text{and }  \mathcal{S}_{2} = \mathcal{S} \setminus \mathcal{S}_1.
\end{align}

\paragraph{Intuition behind our construction:} For simplicity, we consider the case of the SKT model (Example \ref{SKT}) with entropy density given by \eqref{entropy_condition_SKT}. Notice that if the components of the solution, the $u_i$, were bounded away from $0$, then the boundedness of the Hessian posited in \hyperlink{C2}{(\textbf{C2})} would hold. Therefore, we seek some additional structure to exploit in the case that the $u_i$ are ``small''. The observation that we make, and the backbone of our analysis, is that as $u \rightarrow 0$, the components $(A_{\rm{SKT}}(u))_{ij} \rightarrow \alpha_{i0} \delta_{ij}$ (see \eqref{A_SKT}). Our strategy for constructing the ``glued entropy density'' is then as follows: Whenever the components $u_i$ are ``small enough'', we view the cross-diffusion system \eqref{cross_diffusion_system} as a perturbation of $n$ decoupled heat equations (for $f = 0$) and, since any convex function is an entropy density of the heat equation, replace the entropy structure given by \eqref{entropy_condition_SKT} by a quadratic entropy density in this regime. This is helpful because the Hessian of the quadratic entropy density clearly satisfies \hyperlink{C2}{(\textbf{C2})}. In practice, we will glue the entropy given by \eqref{entropy_condition_SKT} (used when the $u_i$ are ``large enough'') to the quadratic entropy density (used when the $u_i$ are ``small enough'') --\hyperlink{C2}{(\textbf{C2})} is then easily seen to hold, it is harder to check that  \hyperlink{C1}{(\textbf{C1})} or \hyperlink{C1p}{(\textbf{C1}$^{\prime}$)} survives the gluing.\\

Throughout this article we use $\epsilon>0$ to denote the size of the $u_i$ at which we switch from considering \eqref{cross_diffusion_system} as a perturbation of uncoupled heat equations (for $f = 0$) to viewing it as a cross-diffusion system with entropy structure. We call the glued entropy density that is glued at $\epsilon>0$, $h_{\epsilon}$.\\

\paragraph{Construction:}  To construct the ``glued entropy density'', for each $h_i$ with $i=1, \ldots, n$, we consider the two cases included in \hyperlink{H1}{(\textbf{H1})}:\\

\noindent \textbf{Case 1:\,($i \in \mathcal{S}_1$)} In this case, in order to satisfy \hyperlink{C2}{(\textbf{C2})}, we have to modify the entropy density: Let $\epsilon>0$ be arbitrary and take a partition of unity subordinate to the cover of $\R$ given by $A_1 = (-\infty, 2 \epsilon)$ and $A_2 = (\epsilon, + \infty)$ --this partition of unity consists of $\eta^1_{\epsilon}$ and $\eta^2_{\epsilon}$ and we, furthermore, assume that each $|\nabla \eta^k_{\epsilon}| \lesssim 1/ \epsilon$ for $k = 1,2$. 
We define $h_{\epsilon,i}$ as 
\begin{align}
\label{defn_glued_i}
h_{\epsilon,i} (x):= \int_0^x \int_0^z h_i^{\prime \prime} \big( \epsilon \eta^1_{\epsilon}(y) +  y  \eta^2_{\epsilon}(y) \big) \, \dd y \dd z.
\end{align}

\noindent \textbf{Case 2:\,($i \in \mathcal{S}_2$)} In this case, we do not alter $h_i$ and define $h_{\epsilon,i}(y) := h_i(y)$ for any $y \in \overline{\dom}$.\\

\medskip

Having considered these two cases, we then define the \textit{glued entropy density} as
\begin{align}
\label{defn_glued}
h_\epsilon(u) := \sum_{i=1}^n h_{\epsilon,i}(u_i).
\end{align}
Since in the situation of Case 1 above, we have that 
\begin{align}
\label{hessian}
 h_{\epsilon,i}^{\prime \prime}(u_i) =  h_i^{\prime \prime} \big( \epsilon \eta^1_{\epsilon}(u_i) +  u_i  \eta^2_{\epsilon}( u_i) \big),
\end{align}
if $h_i^{\prime \prime}$ is bounded from above and below on $[\epsilon, d_i]$, then $h_{\epsilon}$ as defined above satisfies \hyperlink{C2}{(\textbf{C2})}.\\


\paragraph{Intuition behind Case 1 revisited:} Before moving on, let us revisit the intuition behind the treatment of Case 1 above --again using the SKT model (Example \ref{SKT}). For simplicity, set $\alpha_{21} = \alpha_{12}=1$,  by \eqref{entropy_SKT}  we then have that
$h_i^{\prime \prime}(u_i) = u_i^{-1}$, for $i = 1,2$. To make sure that $h_{\epsilon,i}^{\prime \prime} \lesssim 1$ on $[0,d_i]$ the most naive ansatz for $h_{\epsilon,i}$ would be 
\begin{align}
\label{naive_ansatz}
h_{\epsilon,i}(x) `` = "\int_0^x \int_0^z  h_i^{\prime \prime} \big( \max\{y\,,\epsilon\} \big) \dd y \dd z.
\end{align}
Choosing the quadratic entropy density $\tilde{h}_{\epsilon,i}(u_i) = (2\epsilon)^{-1} u_i^2$ of the heat equation, we notice that \eqref{naive_ansatz} corresponds to gluing $\tilde{h}^{\prime \prime}_{\epsilon,i}$ to $h_i^{\prime \prime}$ and integrating up the result. Going from the naive ansatz \eqref{naive_ansatz} to the actual definition \eqref{defn_glued_i} is a simple matter of replacing $``\max\{\cdot,\epsilon\}"$ by a smooth gluing so that $h_{\epsilon}\in C^2(\overline{\dom})$.

\vspace{.2cm}

\subsection{Sufficient conditions for the existence of a glued entropy}
\label{sufficient_conditions}

We now give sufficient conditions under which, for a cross-diffusion system of the form \eqref{cross_diffusion_system} with entropy density satisfying \hyperlink{H1}{(\textbf{H1})}, there exists $\epsilon>0$ such that $h_{\epsilon}$ defined in \eqref{defn_glued} satisfies \hyperlink{C1}{(\textbf{C1})} or \hyperlink{C1p}{(\textbf{C1$^{\prime}$})} and \hyperlink{C2}{(\textbf{C2})}.\\

We split the sufficient conditions into two cases: non volume-filling systems and ``explicit'' volume-filling systems. The special case of ``implicit'' volume-filling systems is handled in Section \ref{implicit_systems}.

\paragraph{Conditions for non volume-filling systems:}

\begin{itemize}[leftmargin=.5in]
\item[\hypertarget{H2}{\textbf{(H2)}}]
There exists $\lambda>0$ such that, for any $y \in \dom$ and $ \rho \in \R^n$, we have that
	\begin{align}\label{lb.d2hA}
	\rho \cdot h''(y)A(y) \rho \geq \lambda |\rho|^2.
		\end{align}
\item[\hypertarget{H3}{\textbf{(H3)}}] For $i \in \mathcal{S}_1$, there exist functions $a_i \in C^0(\overline{\dom})$ such that
	\begin{equation}\label{mua}
	\mu := \min_{i \in \mathcal{S}_1}\inf_{\overline{\dom}}a_i>0
	\end{equation}
and the relation 
	\begin{align}\label{hp.A}
	\max_{i \in \mathcal{S}_1, j = 1, \ldots ,n} |A_{ij}(y)- a_i(y)\delta_{ij}||h_i''(y_i)| \lesssim 1 
	\end{align}
holds for any $y \in \overline{\dom}$. Furthermore, for $i,j = 1, \dots, n$,  we assume that $A_{ij} \in C^0(\overline{\dom})$.

\end{itemize}

\medskip

\paragraph{Conditions for ``explicit'' volume-filling systems:} 

Letting $\Xi_1 \subset \mathbb{R}^n$ be defined as in \eqref{defn_subspace}, the conditions are given as:

\begin{itemize}[leftmargin=.5in]
\item[\hypertarget{H2p}{\textbf{(H2$^{\prime}$)}}] 
There exist $\lambda>0$ and $\delta \geq0$ such that, for any $y \in \dom \cap \Xi_1$ and $\rho \in \R^n$, it holds that 
\begin{align}
\label{hypocoercive_MS_conditions}
\rho \cdot h^{\prime \prime}(y) A (y) \rho \geq \lambda |\rho|^2 - \delta\Big( \sum_{i=1}^n \rho_i \Big)^2.
\end{align}

\item[\hypertarget{H3p}{\textbf{(H3$^{\prime}$})}] For $i \in \mathcal{S}_1$, there exist functions $a_i \in C^0(\overline{\dom})$ such that
	\begin{equation}\label{mua}
	\mu := \min_{i \in \mathcal{S}_1}\inf_{\overline{\dom}}a_i>0
	\end{equation}
and the relation 
	\begin{align}\label{hp.M}
	\max_{i \in \mathcal{S}_1, j = 1, \ldots ,n} |A_{ij}(y)- a_i(y)\delta_{ij}||h_i''(y_i)| \lesssim 1 
	\end{align}
holds for any $y \in \overline{\dom} \cap \Xi_1$. Furthermore, for $i,j = 1, \dots, n$, we assume that $A_{ij} \in C^0(\overline{\dom})$. 
\end{itemize}

\medskip

To see that the above conditions are indeed sufficient for the existence of a glued entropy, we must check that  \hyperlink{C1}{(\textbf{C1})} or \hyperlink{C1p}{(\textbf{C1$^{\prime}$})} and \hyperlink{C2}{(\textbf{C2})} are satisfied. First, notice that \hyperlink{C2}{(\textbf{C2})} is guaranteed by the definitions \eqref{defn_glued_i} and \eqref{defn_glued} in conjunction with \hyperlink{H1}{(\textbf{H1})}.

It then only remains to check that $\epsilon>0$ can be chosen in such a manner that \hyperlink{C1}{(\textbf{C1})} or \hyperlink{C1p}{(\textbf{C1$^{\prime}$})} is satisfied. In particular, we will show:

\begin{prop}[Verifying \hyperlink{C1}{(\textbf{C1})} or \hyperlink{C1p}{(\textbf{C1}$^{\prime}$)}]
\label{glued_entropy}
\qquad\\
\vspace{-.3cm}
\begin{enumerate}
\item (non volume-filling systems) Under the conditions \hyperlink{H1}{(\textbf{H1})}, \hyperlink{H2}{(\textbf{H2})}, and \hyperlink{H3}{(\textbf{H3})} and using the definitions \eqref{defn_glued_i} and \eqref{defn_glued} of $h_{\epsilon}$, there exist $\epsilon > 0$ and $\lambda>0$ such that 
	\begin{align}
	\label{prop_1_result}
 \rho \cdot h_\epsilon''(y)A(y) \rho \geq \lambda |\rho|^2,
	\end{align}
	for any $\rho \in \R^n$ and $y \in \overline{\dom}$.\\
\item (``explicit'' volume-filling systems) Under the conditions \hyperlink{H1}{(\textbf{H1})}, \hyperlink{H2p}{(\textbf{H2}$^{\prime}$)}, and \hyperlink{H3p}{(\textbf{H3}$^{\prime}$)} and using the definitions \eqref{defn_glued_i} and \eqref{defn_glued} of $h_{\epsilon}$ and \eqref{defn_subspace} for $\Xi_0$ and $\Xi_1 \subset \mathbb{R}^n$, there exist $\epsilon > 0$ and $\lambda>0$ such that \eqref{prop_1_result} holds for $\rho \in \Xi_0$ and $y \in \overline{\dom} \cap \Xi_1$.
\end{enumerate}
\end{prop}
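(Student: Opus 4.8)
The plan is to prove \eqref{prop_1_result} by a compactness/contradiction argument, treating the non volume-filling case (1) and the ``explicit'' volume-filling case (2) in parallel. Note first that \hyperlink{C2}{(\textbf{C2})} is automatic from \hyperlink{H1}{(\textbf{H1})} and the construction \eqref{defn_glued_i}--\eqref{defn_glued}, so only \eqref{prop_1_result} needs proof. Abbreviate $g_\epsilon(t):=\epsilon\eta^1_\epsilon(t)+t\eta^2_\epsilon(t)$, so that $h_{\epsilon,i}''(y_i)=h_i''(g_\epsilon(y_i))$ for $i\in\mathcal S_1$ by \eqref{hessian}, while $h_{\epsilon,i}''(y_i)=h_i''(y_i)$ for $i\in\mathcal S_2$; in particular $h_\epsilon''(y)$ is diagonal. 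From the support properties of $\eta^1_\epsilon,\eta^2_\epsilon$ we record that $g_\epsilon(t)=t$ for $t\geq 2\epsilon$, that $\epsilon\leq g_\epsilon(t)\leq 2\epsilon$ for $0\leq t<2\epsilon$, and that $|g_\epsilon(t)-t|\leq\epsilon$ for all $t\geq 0$.

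Suppose, for contradiction, that for every $\epsilon>0$ the estimate \eqref{prop_1_result} fails for all $\lambda>0$; by homogeneity in $\rho$ this means $\rho\cdot h_\epsilon''(y)A(y)\rho$ has non-positive infimum over the relevant set ($|\rho|=1$, $y\in\overline{\dom}$ in case (1); $|\rho|=1$, $\rho\in\Xi_0$, $y\in\overline{\dom}\cap\Xi_1$ in case (2)). Taking $\epsilon_k=1/k$ produces $\rho^{(k)},y^{(k)}$ in these sets, $|\rho^{(k)}|=1$, with $Q_k:=\rho^{(k)}\cdot h_{\epsilon_k}''(y^{(k)})A(y^{(k)})\rho^{(k)}<1/k$. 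After passing to a subsequence, $\rho^{(k)}\to\rho^*$ with $|\rho^*|=1$ (and $\rho^*\in\Xi_0$ in case (2), a closed condition) and $y^{(k)}\to y^*\in\overline{\dom}$ (resp. $\overline{\dom}\cap\Xi_1$); setting $\hat y^{(k)}_i:=g_{\epsilon_k}(y^{(k)}_i)$ for $i\in\mathcal S_1$ and $\hat y^{(k)}_i:=y^{(k)}_i$ otherwise, the bound $|g_\epsilon(t)-t|\leq\epsilon$ gives $\hat y^{(k)}\to y^*$ as well, and $Q_k=\sum_i h_i''(\hat y^{(k)}_i)\rho^{(k)}_i(A(y^{(k)})\rho^{(k)})_i$. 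Put $S:=\{i\in\mathcal S_1:y^*_i=0\}$ and split $Q_k=Q_k^S+Q_k^{S^c}$ according to whether $i\in S$ or $i\notin S$. Using $A\in C^0(\overline{\dom})$ and the continuity of $h_i''$ at the non-$S$ coordinates (where $\hat y^{(k)}_i\to y^*_i$, which is $>0$ for $i\in\mathcal S_1\setminus S$ and lies in $[0,d_i]$ with $h_i''$ bounded there for $i\in\mathcal S_2$), one gets $Q_k^{S^c}\to\sum_{i\notin S}h_i''(y^*_i)\rho^*_i(A(y^*)\rho^*)_i$, whereas for $i\in S$ we have $h_i''(\hat y^{(k)}_i)\to+\infty$ because $\hat y^{(k)}_i\to 0$ (Case 1 of \hyperlink{H1}{(\textbf{H1})}).

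For the $S$-part, write, for $i\in S$, $(A(y^{(k)})\rho^{(k)})_i=a_i(y^{(k)})\rho^{(k)}_i+r^{(k)}_i$ with $a_i$ as in \hyperlink{H3}{(\textbf{H3})}/\hyperlink{H3p}{(\textbf{H3}$^{\prime}$)}, so that $|r^{(k)}_i|\lesssim h_i''(y^{(k)}_i)^{-1}|\rho^{(k)}|$. Combined with $h_i''(\hat y^{(k)}_i)\leq C_0\,h_i''(y^{(k)}_i)$ --which follows from the recorded properties of $g_\epsilon$, the monotonicity of $h_i''$, and the doubling property $h_i''(\epsilon)\leq Ch_i''(2\epsilon)$ in Case 1-- this yields $Q_k^S=\sum_{i\in S}a_i(y^{(k)})h_i''(\hat y^{(k)}_i)(\rho^{(k)}_i)^2+\sum_{i\in S}\mathrm{err}^{(k)}_i$ with $|\mathrm{err}^{(k)}_i|\lesssim|\rho^{(k)}_i|$ and $a_i(y^{(k)})\geq\mu>0$. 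If $\rho^*_i\neq 0$ for some $i\in S$, the corresponding diagonal term blows up while the errors stay $\lesssim\sqrt n$ and $Q_k^{S^c}$ stays bounded, so $Q_k\to+\infty$, contradicting $Q_k<1/k$; hence $\rho^*_i=0$ for all $i\in S$. Then $\mathrm{err}^{(k)}_i\to 0$ for $i\in S$, the diagonal terms are nonnegative, so $\liminf_k Q_k^S\geq 0$. Finally, since $\rho^*$ vanishes on $S$, applying \hyperlink{H2}{(\textbf{H2})} (resp. \hyperlink{H2p}{(\textbf{H2}$^{\prime}$)}, whose term $\delta(\sum_i\rho^*_i)^2$ vanishes because $\rho^*\in\Xi_0$) along a family $y^{*,\tau}\in\dom$ (resp. $\dom\cap\Xi_1$) with $y^{*,\tau}\to y^*$ --for instance $y^{*,\tau}_i=\max\{\min\{y^*_i,d_i-\tau\},\tau\}$ in case (1) and $y^{*,\tau}=(1-\tau)y^*+\tfrac{\tau}{n}(1,\dots,1)$ in case (2)-- and letting $\tau\to 0$ (the $S$-terms drop out), we obtain $\lim_k Q_k^{S^c}=\sum_{i\notin S}h_i''(y^*_i)\rho^*_i(A(y^*)\rho^*)_i\geq\lambda|\rho^*|^2=\lambda>0$. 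Hence $\liminf_k Q_k\geq\liminf_k Q_k^S+\lim_k Q_k^{S^c}\geq\lambda>0$, contradicting $Q_k<1/k\to 0$. This establishes \eqref{prop_1_result}, i.e. \hyperlink{C1}{(\textbf{C1})} (resp. \hyperlink{C1p}{(\textbf{C1}$^{\prime}$)}), for a suitable fixed $\epsilon>0$.

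The step I expect to be the genuine obstacle is the analysis of $Q_k^S$: on the coordinates $i\in S$ that degenerate to $\partial\dom$ the Hessian entry $h_i''$ diverges, and it is only the diagonal dominance of the $i$-th row of $A$ at the sharp rate $1/h_i''$ supplied by \hyperlink{H3}{(\textbf{H3})}/\hyperlink{H3p}{(\textbf{H3}$^{\prime}$)}, together with the doubling property in \hyperlink{H1}{(\textbf{H1})} (needed so that $h_i''(\hat y^{(k)}_i)\lesssim h_i''(y^{(k)}_i)$ even when $y^{(k)}_i\in[\epsilon_k,2\epsilon_k)$), that renders this divergence harmless. A purely computational proof avoiding compactness is also possible --splitting according to whether a ``small'' coordinate $y_i<2\epsilon$ also carries a small $|\rho_i|$, and using the large factor $a_i h_i''(\hat y_i)\gtrsim h_i''(2\epsilon)$ to absorb the cross terms otherwise-- but it relies on exactly the same structural input and requires heavier bookkeeping.
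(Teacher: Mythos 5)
Your proposal is correct, and it takes a genuinely different route from the paper. The paper argues directly and quantitatively: for each fixed $y$ and $\rho$ it splits the test vector as $\rho=\hat\rho+(\rho-\hat\rho)$ according to whether $y_i\geq 2\epsilon$ (together with the $\Ss_2$-indices), bounds the degenerate block from below by $\mu\kappa|\rho-\hat\rho|^2$ with $\kappa\leq h_i''(2\epsilon)$ using \hyperlink{H3}{(\textbf{H3})} and the comparison $h_{\epsilon,i}''\lesssim h_i''$ (the same doubling/monotonicity fact you isolate), applies \hyperlink{H2}{(\textbf{H2})}/\hyperlink{H2p}{(\textbf{H2}$^{\prime}$)} to the $\hat\rho$-block, controls all cross terms by $Cn|\hat\rho|\,|\rho-\hat\rho|$, and absorbs them via Young's inequality by choosing $\epsilon$ so small that $\mu\kappa$ dominates $C(n,\lambda)$ --- i.e.\ the ``purely computational proof'' you sketch in your closing remark is essentially the paper's proof, so your compactness/contradiction argument is the distinct one. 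Your scheme (extract limits $\rho^*,y^*$ along $\epsilon_k=1/k$, split according to the coordinates where $y^*$ degenerates, use the blow-up of $h_i''$ together with the \hyperlink{H3}{(\textbf{H3})}-rate and the doubling property to force $\rho^*$ to vanish there, then invoke \hyperlink{H2}{(\textbf{H2})}/\hyperlink{H2p}{(\textbf{H2}$^{\prime}$)} at the limit, with the hypocoercive defect killed by $\rho^*\in\Xi_0$) is sound, and your identification of the $Q_k^S$-analysis as the crux matches exactly the role of \eqref{convergence.A} and \eqref{est.1} in the paper. What the paper's route buys is an explicit, in principle computable smallness condition on $\epsilon$ and a value of $\lambda$ in terms of the constants in \hyperlink{H1}{(\textbf{H1})}--\hyperlink{H3}{(\textbf{H3})}; what yours buys is lighter bookkeeping at the price of being non-constructive. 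Two caveats, both harmless here: your compactness step uses that $\overline{\dom}$ is bounded (true, since $\dom$ is a box by assumption), and the convergence of $Q_k^{S^c}$ and the limiting application of \hyperlink{H2}{(\textbf{H2})} require $h_i''$ to extend continuously to the closed interval (at $y_i=d_i$, and at $y_i=0$ for $i\in\Ss_2$), which \hyperlink{H1}{(\textbf{H1})} literally states only on the open interval; the paper relies on the same implicit regularity when it passes from $\dom$ to $\overline{\dom}$ via the continuity of $h_\epsilon''A$ on $\overline{\dom}$, so this is not a gap relative to the paper's own standard, but you may wish to state it explicitly.
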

\noindent The proof of Proposition \ref{glued_entropy} is contained in Section \ref{proof_Prop_1}, and takes advantage of the intuition which we have give in Section \ref{construction} (and which is encoded in \hyperlink{H3}{(\textbf{H3})} or \hyperlink{H3p}{(\textbf{H3}$^{\prime}$)}).

\subsection{Main partial regularity result}
\label{main_partial}

For the applicability of the partial regularity theory that we develop we require some additional assumptions on $A$ and $f$ in \eqref{cross_diffusion_system}:
\begin{itemize}[leftmargin=.5in]
 \item[\hypertarget{H4}{\textbf{(H4)}}]  $A \in C^0(\overline{\dom}; \R^{n \times n})$.
 \item[\hypertarget{H5}{\textbf{(H5)}}] $\displaystyle\max_{i=1, \ldots, n}\sup_{y \in \overline{\dom}} |f_i(y)| \lesssim 1$. 
\end{itemize}
Since we only consider bounded solutions, it suffices, e.g., if $f \in C^0(\overline{\dom}; \R^n)$. (Notice that \hyperlink{H4}{(\textbf{H4})} is actually already included in \hyperlink{H3}{(\textbf{H3})}.)

\medskip

Here is the most general form of our partial $C^{0,\alpha}$-regularity result:

\medskip

\begin{theorem}[Partial $C^{0,\alpha}$- regularity]
\label{Theorem_1} Let $u$ be a bounded weak solution of \eqref{cross_diffusion_system}. Assume that the conditions \hyperlink{H1}{(\textbf{H1})}, \hyperlink{H4}{(\textbf{H4})}, and \hyperlink{H5}{(\textbf{H5})} hold and, furthermore, that one of the following is satisfied:\\

$\bullet$ (non volume-filling systems) \hyperlink{H2}{(\textbf{H2})} and  \hyperlink{H3}{(\textbf{H3})} hold.\\

$\bullet$ (``explicit'' volume-filling systems) \hyperlink{H2p}{(\textbf{H2}$^{\prime}$)} and  \hyperlink{H3p}{(\textbf{H3}$^{\prime}$)} hold, and the volume-filling constraint \eqref{volume_perserving}.\\

Then, there exists an open set $\Lambda_0 \subseteq \Lambda$ such that $u \in C^{0,\alpha}_{\rm{loc}}(\Lambda_0)$ for any $ \alpha \in (0,1)$. Furthermore, there exist $\epsilon_0$ and $\epsilon_1>0$ such that 
\begin{align}
\label{set_condition_1}
\Lambda \setminus \Lambda_0 \subseteq  \Big\{ z_0 \in \Lambda \, \vert \, \displaystyle\liminf_{R \rightarrow 0} \fint_{\Cc_R(z_0)} | u - (u)_{z_0,R}  |^2 \, \textrm{d}z >\epsilon_0 \Big\}
\end{align}
and 
\begin{align}
\label{set_condition_2}
\Lambda \setminus \Lambda_0 \subseteq 
 \Big\{ z_0 \in \Lambda \, \vert \, \displaystyle\liminf_{R \rightarrow 0}   R^{-d} \int_{\Cc_R(z_0)} | \nabla u |^2 \, \textrm{d}z >\epsilon_1 \Big\}.
\end{align}
\end{theorem}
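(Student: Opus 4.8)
The plan is to follow the Giaquinta--Struwe strategy for partial regularity of nonlinear parabolic systems, but with energy estimates replaced by entropy-dissipation estimates and the $L^2$-distance replaced by the relative entropy $h_\epsilon(\cdot\,|\,\cdot)$ induced by the glued entropy density. First, since the conditions \hyperlink{H1}{(\textbf{H1})}, \hyperlink{H2}{(\textbf{H2})}/\hyperlink{H2p}{(\textbf{H2}$^{\prime}$)}, and \hyperlink{H3}{(\textbf{H3})}/\hyperlink{H3p}{(\textbf{H3}$^{\prime}$)} are assumed, Proposition \ref{glued_entropy} guarantees the existence of $\epsilon>0$ such that $h_\epsilon$ satisfies \hyperlink{C1}{(\textbf{C1})} (resp.\ \hyperlink{C1p}{(\textbf{C1}$^{\prime}$)}) and \hyperlink{C2}{(\textbf{C2})}; in particular, by \eqref{compare}, the relative entropy is comparable to the squared $L^2$-distance. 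With this tool in hand, I would establish the three pillars of a Campanato-iteration argument: (i) a Caccioppoli-type inequality for the nonlinear system $u$, proved by testing the weak formulation with a cutoff times $h_\epsilon'(u) - h_\epsilon'((\tilde u)_{x_0,R})$ and mimicking the computation \eqref{intermediate_cacc}; (ii) a comparison estimate showing that $u$ on a small cylinder is close (in relative entropy, hence in $L^2$) to the solution $w$ of the corresponding \emph{frozen} linear constant-coefficient parabolic system $\partial_t w - \nabla\cdot A(y_0)\nabla w = 0$ with the same parabolic boundary data, where $y_0$ is (close to) the weighted average of $u$; and (iii) the classical decay estimate for such linear constant-coefficient systems, which holds because \hyperlink{C1}{(\textbf{C1})}/\hyperlink{C1p}{(\textbf{C1}$^{\prime}$)} makes $h_\epsilon''(y_0)A(y_0)$ coercive (in the volume-filling case one restricts to the affine subspace $\Xi_1$ and increments in $\Xi_0$, using \eqref{volume_perserving}).

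Next, I would combine these ingredients in the standard way. Set the \emph{excess}
\begin{align*}
\Phi(z_0,R) := \fint_{\Cc_R(z_0)} |u - (u)_{z_0,R}|^2 \dd z.
\end{align*}
Using (ii) and (iii), together with the continuity of $A$ from \hyperlink{H4}{(\textbf{H4})} and the bound on $f$ from \hyperlink{H5}{(\textbf{H5})} (which contributes only lower-order terms, controllable because $\sup|f|\lesssim 1$ and the solution is bounded), one derives an excess-decay inequality of the form
\begin{align*}
\Phi(z_0,\theta R) \leq C\theta^2 \Phi(z_0,R) + C\,\omega\big(\Phi(z_0,R) + |(u)_{z_0,R}|\big)\,\Phi(z_0,R) + C R^{2\beta}
\end{align*}
for a suitable modulus of continuity $\omega$ coming from the uniform continuity of $A$, some small $\beta>0$, and any fixed $\theta\in(0,1)$. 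Fixing $\theta$ small so that $C\theta^2 \le \tfrac12\theta$, and then choosing $\epsilon_0>0$ small enough that $C\,\omega(\cdot)$ is negligible whenever $\Phi(z_0,R)\le\epsilon_0$ (this is where the smallness threshold in \eqref{set_condition_1} enters), a standard iteration yields that $\Phi(z_0,R)\lesssim R^{2\alpha}$ for all $z_0$ in a neighborhood of any point where the excess is once small at some scale, hence $u\in C^{0,\alpha}_{\rm loc}$ there by the parabolic Campanato embedding. The set $\Lambda_0$ of such points is open (the smallness of the excess at a fixed scale is an open condition, by continuity of the integral average in $z_0$), and by construction its complement is contained in the set in \eqref{set_condition_1}. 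The inclusion \eqref{set_condition_2} follows because the Caccioppoli inequality (i) bounds $R^{-d}\int_{\Cc_R}|\nabla u|^2$ by (a constant times) $R^{-d-2}\int_{\Cc_{2R}}|u-(\tilde u)_{x_0,2R}|^2 \sim \Phi(z_0,2R)$ plus lower-order terms, so if the left side of \eqref{set_condition_2} is below a suitable $\epsilon_1$, then the excess is below $\epsilon_0$ at some scale and the point lies in $\Lambda_0$; equivalently, points of $\Lambda\setminus\Lambda_0$ have non-small rescaled Dirichlet energy.

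The main obstacle I anticipate is step (i)–(ii): making the entropy-based Caccioppoli estimate and the comparison with the frozen system genuinely quantitative and uniform. Two subtleties stand out. First, the glued entropy's Hessian is bounded (by \hyperlink{C2}{(\textbf{C2})}) but the construction \eqref{defn_glued_i} is only $C^2$, and near $\{u_i\approx\epsilon\}$ one must track constants carefully — here the doubling hypothesis $h_i''(\epsilon)\le C h_i''(2\epsilon)$ in \hyperlink{H1}{(\textbf{H1})} and the comparability \eqref{compare} are what keep everything scale-invariant with constants depending only on $d,n,A,\epsilon$. Second, in the comparison step, the frozen coefficient $A(y_0)$ must be evaluated at a point $y_0$ near the (weighted) average of $u$, and controlling $|A(u) - A(y_0)|$ requires the uniform continuity of $A$ together with an a priori control of the oscillation of $u$, which is exactly the excess $\Phi$ one is trying to bound — so the argument is genuinely a fixed-point/iteration and the ``error'' terms must be absorbed with the correct power of $\theta$ and the smallness of $\epsilon_0$. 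In the volume-filling case there is the additional bookkeeping that all the linear-algebra estimates must be carried out on $\Xi_1$ with gradient increments lying in $\Xi_0$; the point, as explained after Definition \ref{glued_def}, is that the $i$-th-coordinate gradients $\rho_i = (\partial_i u_1,\dots,\partial_i u_n)$ satisfy $\sum_k \rho_{i,k} = \partial_i\big(\sum_k u_k\big) = 0$ by \eqref{volume_perserving}, so \hyperlink{C1p}{(\textbf{C1}$^{\prime}$)} is exactly the coercivity one needs, and everything else goes through as in the non volume-filling case.
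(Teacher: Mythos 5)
Your overall architecture (glued entropy via Proposition \ref{glued_entropy}, Caccioppoli for the nonlinear system, comparison with a frozen constant-coefficient problem, decay for the frozen problem, Campanato iteration on the excess, openness of $\Lambda_0$, and the equivalence of the two smallness conditions via Caccioppoli--Poincar\'e) matches the paper's strategy. However, there is a genuine gap at exactly the point you flag as ``step (ii)'': you never explain how the comparison error is actually absorbed, and the ingredient that makes this possible is missing from your list. Writing $\bar v = \bar u - u$ for the difference with the frozen solution, testing gives
\begin{align}
\int_{\Cc_{R/8}(z_0)} |\nabla \bar v|^2 \dd z \;\lesssim\; \int_{\Cc_{R/8}(z_0)} \big|A(u) - A\big((u)_{z_0,R}\big)\big|^2\, |\nabla u|^2 \dd z ,
\end{align}
and boundedness of $u$ plus uniform continuity of $A$ only gives $|A(u)-A((u)_{z_0,R})|\lesssim 1$ pointwise: smallness of the $L^2$-excess $\Phi(z_0,R)$ does \emph{not} control the pointwise oscillation of $u$, and $\nabla u$ may concentrate precisely on the set where $u$ deviates from its mean. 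So from $\nabla u \in L^2$ alone you cannot reach your claimed error term $C\,\omega(\Phi(z_0,R)+|(u)_{z_0,R}|)\,\Phi(z_0,R)$. The paper's route requires a higher-integrability statement, Proposition \ref{reverse_holder} (a reverse H\"older inequality giving $\nabla u\in L^p_{\rm loc}$ for some $p>2$), so that H\"older's inequality with exponents $p/2$ and $p/(p-2)$ decouples the two factors, after which Jensen's inequality for the concave modulus $\omega$ yields the small prefactor $\omega\big(\fint_{\Cc_R(z_0)}|u-(u)_{z_0,R}|^2\dd z\big)^{\frac{p-2}{p}}$ as in \eqref{modulus_estimate_1}. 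This reverse H\"older inequality is not a routine add-on: its proof occupies Section \ref{reverse_holder_section} and needs the entropy Caccioppoli estimate (Lemma \ref{nonlinear_cacc}), a sup-in-time Poincar\'e-type bound (Lemma \ref{time_reg}), and a parabolic Gehring lemma (Proposition \ref{technical}). (It is also what gives the exponent $\gamma=p-2>0$ in the Hausdorff-dimension bound of Corollary \ref{Theorem_1_corollary}, and the characterization \eqref{set_condition_2}.)

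Two smaller remarks. First, the paper runs the iteration at the level of the Dirichlet energy $R^{-d}\int_{\Cc_R}|\nabla u|^2$ and only converts to the $u$-excess at the end via Lemma \ref{nonlinear_cacc} and Lemma \ref{Poincare}; your iteration directly on $\Phi$ is an acceptable variant, but the conversion steps must then be built into each iteration and the missing higher-integrability issue is identical. Second, the paper keeps $f(u)$ in the frozen system \eqref{MS_system_frozen} and its solvability is settled by symmetrizing with $B=\sqrt{h_\epsilon''((u)_{z_0,R})}$; taking zero right-hand side as you propose is harmless (it only shifts the $R^{d+4}$-type lower-order terms), so the only substantive defect in your proposal is the absence of the reverse H\"older/Gehring step.
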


Using standard arguments, we obtain the following corollary:

\begin{corollary}
\label{Theorem_1_corollary}
Under the assumptions of Theorem \ref{Theorem_1}, the singular set $\Lambda \setminus \Lambda_0$ satisfies 
\begin{align}
\label{hausdorff_measure_result}
H^{d- \gamma}(\Lambda \setminus \Lambda_0 )  = 0,
\end{align}
for some $\gamma>0$. 
\end{corollary}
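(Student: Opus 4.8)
\textbf{Proof proposal for Corollary \ref{Theorem_1_corollary}.}

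The plan is to deduce the Hausdorff-dimension bound on the singular set $\Lambda \setminus \Lambda_0$ from the characterization \eqref{set_condition_2} of the complement of the regular set, combined with a standard density-type estimate for the parabolic Hausdorff measure. First I would observe that \eqref{set_condition_2} tells us that every singular point $z_0$ satisfies $\liminf_{R \to 0} R^{-d} \int_{\Cc_R(z_0)} |\nabla u|^2 \dd z > \epsilon_1 > 0$. Since $u \in L^2(0,T; H^1(\Omega;\R^n))$, the function $\mu(E) := \int_E |\nabla u|^2 \dd z$ defines a finite Borel measure on $\Lambda$, and \eqref{set_condition_2} says precisely that the singular set is contained in the set of points where the upper $d$-dimensional (parabolic) density of $\mu$ is bounded below by a positive constant. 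The point is that such a set must be small: by a Vitali-type covering argument (the parabolic analogue of the standard density estimates, see, e.g., the arguments in \cite{GS_82} or classical references on Hausdorff measure), if $\limsup_{R\to 0} \frac{\mu(\Cc_R(z_0))}{R^{d}} \geq \epsilon_1$ for all $z_0$ in a set $E$, then $H^{d}_{\mathrm{par}}(E) \leq C(d) \epsilon_1^{-1} \mu(\Lambda) < \infty$, hence in particular $H^{d}_{\mathrm{par}}(E)$ is $\sigma$-finite and $E$ has parabolic Hausdorff dimension at most $d$.

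To upgrade this from dimension $\leq d$ to dimension $\leq d - \gamma$ for some $\gamma > 0$, the key input is higher integrability of the gradient: a Gehring-type lemma (reverse Hölder inequality) applied to the Caccioppoli estimate established earlier --- specifically the nonlinear Caccioppoli-type estimate of Lemma \ref{nonlinear_cacc}, from which \eqref{intermediate_cacc} and hence \eqref{set_condition_2} are derived --- yields that $\nabla u \in L^{2+\delta_0}_{\mathrm{loc}}$ for some $\delta_0 > 0$. I would then run the same covering argument but with the measure $\tilde\mu(E) := \int_E |\nabla u|^{2+\delta_0} \dd z$ in place of $\mu$: Hölder's inequality converts the lower bound on $R^{-d}\int_{\Cc_R} |\nabla u|^2$ into a lower bound on $R^{-(d - \gamma)}\int_{\Cc_R} |\nabla u|^{2+\delta_0}$ for an appropriate $\gamma = \gamma(\delta_0, d) > 0$, and the same Vitali argument then gives $H^{d-\gamma}_{\mathrm{par}}(\Lambda \setminus \Lambda_0) = 0$.

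The main obstacle I anticipate is establishing the reverse Hölder / higher-integrability estimate for $\nabla u$ in a form uniform enough to feed into the covering argument: one needs a Caccioppoli inequality with the right scaling that self-improves under Gehring's lemma, and here the entropy structure (through the glued entropy and the bound \eqref{compare} relating relative entropy to squared distance) must be used carefully --- in particular one wants the Caccioppoli estimate to control $\fint_{\Cc_R} |\nabla u|^2$ by $\big(\fint_{\Cc_{2R}} |\nabla u|^{2q}\big)^{1/q}$ for some $q < 1$ (plus lower-order terms coming from $f$ via \hyperlink{H5}{(\textbf{H5})}), which is exactly the hypothesis of the parabolic Gehring lemma. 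Once this self-improvement is in hand, the passage to the Hausdorff measure bound is routine measure theory, so I would expect the corollary's proof itself to be short, deferring the technical weight to the Caccioppoli estimate already cited.
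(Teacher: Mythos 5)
Your proposal follows essentially the same route as the paper: the paper's proof simply invokes its already-established reverse H\"older inequality (Proposition \ref{reverse_holder}) to get $|\nabla u|^p\in L^1_{\loc}$ for some $p>2$, uses H\"older's inequality to convert the density bound \eqref{set_condition_2} at singular points into a positive upper density of $\rho^{-(d-(p-2))}\int_{\Cc_\rho}|\nabla u|^p$, and concludes $H^{d-\gamma}(\Lambda\setminus\Lambda_0)=0$ with $\gamma=p-2$ from the Giaquinta--Giusti density result (Proposition \ref{giusti}). The only caveats are that the "main obstacle'' you anticipate (higher integrability via Gehring) is already done in the paper as Proposition \ref{reverse_holder}, so no new work is needed there, and that a bare Vitali covering only yields finiteness of the Hausdorff measure -- the upgrade to $H^{d-\gamma}=0$ uses the absolute continuity of $z\mapsto|\nabla u|^p\dd z$ exactly as encapsulated in Proposition \ref{giusti}, which your appeal to the classical density estimates implicitly covers.
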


For a higher partial regularity result, we then show that under the additional assumption that the $A_{ij}$ are H\"{o}lder continuous with exponent $\sigma \in (0,1)$,  $\nabla u$ (for $u$ satisfying the assumptions of Theorem \ref{Theorem_1}) satisfies a partial H\"{o}lder continuity result also with exponent $\sigma$.

\begin{theorem}[Partial $C^{1, \sigma}$- regularity]
\label{higher_reg}
Let $i, j = 1, \ldots, n$ and $\sigma \in (0,1)$. We adopt the assumptions of Theorem \ref{Theorem_1} and additionally assume that $A_{ij} \in C_{\rm{loc}}^{0, \sigma}( \mathcal{D})$. Under these conditions, we find that $\nabla u \in C^{0,\sigma}_{\rm{loc}}(\Lambda_0)$, where $\Lambda_0$ is determined in Theorem \ref{Theorem_1}. 
\end{theorem}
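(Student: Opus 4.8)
The plan is to bootstrap from the $C^{0,\alpha}$-partial regularity of Theorem \ref{Theorem_1} to $C^{1,\sigma}$-regularity on the same open set $\Lambda_0$. The starting point is that on $\Lambda_0$ the solution $u$ is locally H\"older continuous; hence, fixing a point $z_0\in\Lambda_0$, on a small parabolic cylinder $\Cc_{R_0}(z_0)\subset\Lambda_0$ the coefficient $A(u(z))$ is itself locally $C^{0,\alpha\sigma}$ in $z$ (composition of the $C^{0,\sigma}_{\rm loc}(\mathcal D)$-map $y\mapsto A_{ij}(y)$ with the $C^{0,\alpha}$-map $z\mapsto u(z)$). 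So $u$ solves a linear parabolic system $\partial_t u - \nabla\cdot(\tilde A(z)\nabla u) = f(u)$ on $\Cc_{R_0}(z_0)$ with H\"older-continuous (in $z$) coefficients $\tilde A$ that are additionally uniformly elliptic in the sense coming from the glued-entropy coercivity \hyperlink{C1}{(\textbf{C1})}/\hyperlink{C1p}{(\textbf{C1}$^\prime$)} (more precisely, after the change of variables $w := h_\epsilon'(u)$, or working directly, one has the entropy-ellipticity $\rho\cdot h_\epsilon''(u)A(u)\rho\ge\lambda|\rho|^2$ which, together with the two-sided bound on $h_\epsilon''$ from \hyperlink{C2}{(\textbf{C2})}, gives genuine uniform ellipticity of the associated bilinear form). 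The right-hand side $f(u)$ is bounded by \hyperlink{H5}{(\textbf{H5})} and in fact continuous. At this stage the result should follow from the classical linear parabolic Schauder/Campanato theory for systems with continuous coefficients: $\nabla u \in C^{0,\beta}_{\rm loc}$ for every $\beta<1$, and then a second iteration improves the exponent to $\sigma$, the H\"older exponent of the coefficients themselves.

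Concretely I would run the Campanato iteration of Giaquinta--Struwe \cite{GS_82} one more time, now at the level of the gradient, exactly as is done in the classical theory once everywhere-continuity of $u$ is known. The key decay estimate to establish is an excess-decay of the form
\begin{align}
\label{grad_excess_decay_plan}
\fint_{\Cc_{\theta R}(z_0)} |\nabla u - (\nabla u)_{z_0,\theta R}|^2\,\dd z \;\lesssim\; \theta^{2}\fint_{\Cc_{R}(z_0)}|\nabla u - (\nabla u)_{z_0,R}|^2\,\dd z \;+\; \omega(R)^{2}\,\big(\text{lower order}\big),
\end{align}
where $\omega$ is the modulus of continuity of $z\mapsto \tilde A(z)$ on $\Cc_{R_0}(z_0)$. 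This is proved by the usual comparison argument: freeze the coefficient at $z_0$, let $v$ solve the constant-coefficient linear parabolic system with the same parabolic boundary data as $u$ on $\Cc_R(z_0)$, use the interior $C^{1,1}$-type a priori estimates for constant-coefficient systems to get the $\theta^2$-decay for $\nabla v$, and control $\nabla(u-v)$ by a Caccioppoli-type estimate in which the error term carries the oscillation $\omega(R)$ of the coefficients plus the contribution of $f(u)$. Here the Caccioppoli estimate for $u$ and for the frozen system is precisely the entropy-infused Caccioppoli estimate already developed for Theorem \ref{Theorem_1} (Lemma \ref{nonlinear_cacc}), now used around the vector-valued average $(\nabla u)_{z_0,R}$ rather than $(\tilde u)_{x_0,R}$ — since $u$ is now continuous, $A(u)$ is close to constant on small cylinders and no further regularization of the entropy is needed for this step. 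Once \eqref{grad_excess_decay_plan} is in hand, the Campanato iteration with $\omega(R)\lesssim R^{\alpha\sigma}$ gives $\nabla u\in C^{0,\min(\alpha\sigma,\,1^-)}_{\rm loc}$; feeding this improved regularity back in so that now $\omega(R)\lesssim R^{\sigma}$ (the coefficients are $C^{0,\sigma}$ in $y$ and $u$ is Lipschitz-close to $C^1$, in particular H\"older with exponent arbitrarily close to $1$), a final pass yields $\nabla u\in C^{0,\sigma}_{\rm loc}(\Lambda_0)$. The characterization of $\Lambda_0$ and the singular-set bounds are unchanged, since $\Lambda_0$ is exactly the set produced in Theorem \ref{Theorem_1}.

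The main obstacle I expect is purely technical rather than conceptual: making sure that the entropy-based Caccioppoli machinery, which was designed to measure distances via the relative glued entropy $h_\epsilon(u\mid v)$ and whose natural "frozen" comparison object is the solution of a linearized system with coefficients $A((\tilde u)_{x_0,R})$, also delivers the gradient-level comparison estimate one needs in \eqref{grad_excess_decay_plan}. In the classical parabolic Schauder theory this step is standard because one differentiates the equation; here $A$ is only $C^{0,\sigma}$ in $u$, so one cannot differentiate the equation directly, and one must instead obtain the gradient decay by the difference-quotient / comparison route, being careful that the entropy-ellipticity constants (which depend on $\epsilon$, via $\lesssim$) stay uniform along the iteration — this is fine because $\epsilon$ is fixed once and for all at the start (a glued entropy exists by Proposition \ref{glued_entropy}) and $u$ takes values in the fixed compact $\overline{\dom}$. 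A secondary point to handle carefully is the continuity, not just boundedness, of $f(u)$ as a function of $z$, which is what one needs for the lower-order term in \eqref{grad_excess_decay_plan} to be genuinely lower-order in the Campanato sense; this is immediate from $f\in C^0(\overline{\dom};\R^n)$ and the just-established continuity of $u$.
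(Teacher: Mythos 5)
Your proposal follows essentially the same route as the paper's proof: there, too, the Campanato iteration is rerun at the level of the gradient excess, comparing $u$ with the solution of the frozen constant-coefficient system and using the decay estimate \eqref{constant_coeff_reg_2}, with the error controlled through the H\"older continuity of $A$ composed with the already established $C^{0,\alpha}$-regularity of $u$, in a two-pass bootstrap (a first pass, with $\alpha$ close to $1$, giving local boundedness of $\nabla u$, and a second pass, with the coefficient oscillation measured in sup-norm as in \eqref{final_3_2}, giving the exponent $\sigma$). The only deviation is cosmetic: your intermediate exponent $\min(\alpha\sigma,1^-)$ from the first pass is too optimistic, since a priori one only has $\fint_{\Cc_R}|\nabla u|^2\lesssim R^{2\alpha-2}$ rather than $O(1)$, but this is harmless because all the second pass requires from the first is the local boundedness of $\nabla u$, which your first pass does deliver.
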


\noindent With the tools used to prove Theorem \ref{Theorem_1} at our disposal, the proof of Theorem \ref{higher_reg} is quite classical and a similar argument can be found in \cite[Theorem 3.2]{GS_82}.

\subsection{Strategy of the proof of Theorem \ref{Theorem_1}: A Campanato iteration}
\label{campanato}

Under the conditions of Theorem \ref{Theorem_1} we have access to a glued entropy density $h_{\epsilon}$, as defined in Definition \ref{glued_def}. Throughout our arguments we make use of this glued entropy structure without further notice.

The strategy that we pursue for proving Theorem \ref{Theorem_1} is to use a Campanato iteration. In particular, define the \textit{tilt excess} of $u$ as 
\begin{align}
\label{excess}
\phi(z_0; R) := \fint_{\Cc_R(z_0)} | u -  (u)_{z_0, R} |^2\dd z.
\end{align}
We then show the following: Fix $\alpha \in (0,1)$. There exists $\Lambda_0 \subseteq \Lambda$ that satisfies \eqref{set_condition_1} and \eqref{set_condition_2} such that, for any $z_0 \in \Lambda_0$ and $R_0>0$ sufficiently small, a neighborhood $U$ of $z_0$ exists with the property that
\begin{align}
\label{excess_decay}
\phi(z_0^{\prime}; r) \lesssim r^{2\alpha}
\end{align}
holds uniformly with respect to $z_0' \in U$ and for any $0<r < R_0$. Using the equivalence of Campanato and H\"{o}lder spaces (see, e.g., \cite[Proposition 1.1]{GS_82} or \cite[Theorem 3.1]{PS_65}) this implies the $C_{\loc}^{0,\alpha}(\Lambda_0)$-regularity of $u$.

The method for obtaining \eqref{excess_decay} for two sufficiently small radii $0<r\leq R$ is to view $u$ as a perturbation of the weak solution $\bar{u}$ of the frozen system 
\begin{align}
\label{MS_system_frozen}
\begin{split}
\partial_t \bar{u} - \nabla \cdot A( (u)_{z_0,R} ) \nabla \bar{u} & = f(u) \quad \quad \textrm{in}  \quad \Cc_{R/8}(z_0),\\
\bar{u}& = u \quad \quad \quad  \textrm{ on} \quad \partial^P \Cc_{R/8}(z_0).
\end{split}
\end{align}
We remark that the radius $``R/8"$ is used here for technical reasons that will become clear below. 

The issue of the solvability of \eqref{MS_system_frozen} can easily be settled using the (glued) entropy structure of \eqref{cross_diffusion_system}. In particular, let $B = \sqrt{h^{\prime\prime}_{\epsilon}((u)_{z_0,R})} \in \R^{n \times n}$ (constant positive definite matrix). 
We define the space $V$ as
\begin{align*}
V := \begin{cases}
\R^n & \mbox{if \hyperlink{C1}{(\textbf{C1})} holds}\\
\{ v\in\R^n~:~\sum_{i = 1}^n B_{ii}^{-1}v_i = 0 \}
& \mbox{if \hyperlink{C1p}{(\textbf{C1$^{\prime}$})} holds}.
\end{cases}
\end{align*}
Decomposing $\bar{u} = u + B^{-1}v$ and using the equations \eqref{cross_diffusion_system} and \eqref{MS_system_frozen} (both left-multiplied with $B$) we obtain a linear evolution equation for $v$:
\begin{align}
\label{MS_system_frozen_II}
\begin{split}
\partial_t v - \nabla \cdot \mathcal{A}\nabla v & = 
F(u,\nabla u) \quad \quad \textrm{in}  \quad \Cc_{R/8}(z_0),\\
v& = 0 \quad \quad \quad  \textrm{ on} \quad \partial^P \Cc_{R/8}(z_0),
\end{split}
\end{align}
where $F(u,\nabla u) = B \left[ \nabla \cdot ( A(u) - A( (u)_{z_0,R} )) \nabla u  \right]$ and 
$\mathcal{A} =B A( (u)_{z_0,R} )B^{-1}$.
Thanks to Definition~\ref{weak_solution} and \hyperlink{H4}{(\textbf{H4})}, $F(u,\nabla u)\in L^2(0,T; H^{-1}(\Cc_{R/8}(z_0);V))$, while the
(constant) matrix $\mathcal{A}$ is positive definite on $V$ due to either \hyperlink{C1}{(\textbf{C1})} or \hyperlink{C1p}{(\textbf{C1$^{\prime}$})}. At this point \cite[Thr.~23.A]{Z_90}
yields the existence of a unique weak solution $v\in L^2(0,T; H^1_0(\Cc_{R/8}(z_0);V))$ to \eqref{MS_system_frozen_II}, meaning that $\bar{u} = u + B^{-1}v\in L^2(0,T; H^1(\Cc_{R/8}(z_0); \R^n))$ is the unique solution to \eqref{MS_system_frozen}. 

In order to transfer regularity from $\bar{u}$ onto $u$, we must first show that $\bar{u}$ is sufficiently regular. To see this it is necessary to show that $\bar{u}$ satisfies a Caccioppoli inequality. In particular, we show that:

\begin{lemma}[Caccioppoli inequality for solutions of (\ref{MS_system_frozen})] 
\label{linear_cacc} We adopt the assumptions of Theorem \ref{Theorem_1}. For a point $z_0 \in \Lambda$, let  $\bar{u}$ be the weak solution of the frozen system \eqref{MS_system_frozen} on $\Cc_{R/8}(z_0)$. Then, for $z_0^{\prime} \in \Cc_{R/8}(z_0)$ and $ r>0 $ such that $\Cc_{2 r }(z_0^{\prime}) \subset  \Cc_{R/8}(z_0)$, we have that
\begin{align}
\label{linear_cacc_eq}
 \int_{\Cc_r(z^{\prime}_0)}  |\nabla \bar{u} |^2 \, \textrm{d}z
 \lesssim  \frac{1}{r^2} \int_{\Cc_{2r}(z^{\prime}_0) } |\bar{u} - b|^2 \, \textrm{d}z + r^{d+4} \|f (u)\|_{L^{\infty}(\Cc_{2r}(z^{\prime}_0))}^2
\end{align}
for any $b \in \mathbb{R}^n$.

\end{lemma}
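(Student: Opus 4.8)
The plan is to prove the Caccioppoli inequality \eqref{linear_cacc_eq} for the frozen system \eqref{MS_system_frozen} by running the standard energy-method argument, but carried out \emph{with respect to the glued entropy} $h_\epsilon$ rather than the raw $L^2$-norm — this is the exact place where the weighted average $(\tilde{u})_{x_0,R}$ and the comparison \eqref{compare} play their role. First I would reduce to the case where the test function is localized by a cut-off: fix $z_0' = (x_0',t_0')$ and $r$ with $\Cc_{2r}(z_0') \subset \Cc_{R/8}(z_0)$, and choose $\eta \in C_0^\infty(\Cc_{2r}(z_0'))$ with $\eta \equiv 1$ on $\Cc_r(z_0')$, $|\nabla\eta| \lesssim 1/r$, $|\partial_t\eta| \lesssim 1/r^2$. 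The natural quantity to differentiate in time is $\int_{B_{2r}} h_\epsilon(\bar u \,|\, b)\,\eta^2\,\textrm{d}x$ for an arbitrary constant $b \in \R^n$ (one may take $b = (\tilde{\bar u})_{x_0',r}(t)$ or just a fixed vector, as in the statement). Testing \eqref{MS_system_frozen} — or equivalently working with the $v$-equation \eqref{MS_system_frozen_II} — with $\phi = h_\epsilon'(\bar u) - h_\epsilon'(b) $ times $\eta^2$ (this is admissible since $h_\epsilon \in C^2(\overline{\dom})$ with bounded Hessian by \hyperlink{C2}{(\textbf{C2})}, so $\phi \in L^2(0,T;H^1)$), the parabolic term produces $\partial_t \int h_\epsilon(\bar u\,|\,b)\eta^2$ up to the $\partial_t\eta^2$ error term, and the elliptic term produces the good term $\int \eta^2 \nabla\bar u : h_\epsilon''(\bar u) A((u)_{z_0,R}) \nabla\bar u$.

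The heart of the estimate is then: the good term is bounded below by $\lambda \int \eta^2 |\nabla\bar u|^2$ using \hyperlink{C1}{(\textbf{C1})} (in the non volume-filling case) or \hyperlink{C1p}{(\textbf{C1}$^\prime$)} together with the fact that the spatial derivatives of $\bar u$ lie in $\Xi_0$ (in the volume-filling case — here one uses that the frozen system preserves the constraint, since the initial/boundary data $u$ satisfies \eqref{volume_perserving} and the operator $\nabla\cdot A((u)_{z_0,R})\nabla$ together with $f$ satisfying \eqref{reproduction_rates} respects $\Xi_1$). The cross terms coming from $\nabla(\eta^2) = 2\eta\nabla\eta$ are of the form $\int \eta |\nabla\eta|\, |\nabla\bar u|\, |h_\epsilon''(\bar u) A((u)_{z_0,R})(\bar u - b)|$, which using the boundedness of $h_\epsilon''$ (C2), the boundedness of $A$ on the bounded range of $u$ (H4), and Young's inequality are absorbed into $\tfrac{\lambda}{2}\int\eta^2|\nabla\bar u|^2$ at the cost of $\tfrac{C}{r^2}\int_{\Cc_{2r}}|\bar u - b|^2$. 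The forcing term contributes $\int \eta^2 |f(u)||h_\epsilon'(\bar u) - h_\epsilon'(b)| \lesssim \|f(u)\|_{L^\infty} \int_{\Cc_{2r}} (|\bar u - b| + \ldots)$, which by boundedness of $\bar u$ and Young's inequality is controlled by $r^{d+2}\|f(u)\|_{L^\infty(\Cc_{2r})}^2 + C\int_{\Cc_{2r}}|\bar u - b|^2 / r^2$ (the power $r^{d+4}$ in the statement appears after integrating in time over the interval of length $(2r)^2$ and accounting for the volume $|\Cc_{2r}| \sim r^{d+2}$; I would keep careful track of the scaling here). Finally I integrate the differential inequality in time from $t_0' - (2r)^2$ to a generic $t \le t_0'$; since $\eta$ vanishes on the parabolic boundary the boundary contribution drops, and using \hyperlink{C2}{(\textbf{C2})} in the form \eqref{compare} to pass between $h_\epsilon(\bar u\,|\,b)$ and $|\bar u - b|^2$ both in the supremum-in-time term and in the space-time integrals yields \eqref{linear_cacc_eq}.

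There is one genuine technical point: the test function $\phi = (h_\epsilon'(\bar u) - h_\epsilon'(b))\eta^2$ is not quite admissible as-is for the weak formulation because $\partial_t\bar u$ only lies in $L^2(0,T;H^{-1})$, so pairing it with $h_\epsilon'(\bar u)$ to get $\partial_t \int h_\epsilon(\bar u)$ requires the chain rule for the time derivative in Bochner spaces. This is standard (e.g.\ via Steklov averaging or via \cite[Prop.~23.23]{Z_90}, already cited for the continuity in time of weak solutions), but should be invoked carefully; alternatively one works throughout with the linear $v$-equation \eqref{MS_system_frozen_II}, where $\mathcal A$ is a constant positive-definite matrix on $V$, tests with $v\eta^2$ in the genuine $L^2$-norm, obtains the classical Caccioppoli inequality for $v$, and then transfers it back to $\bar u = u + B^{-1}v$ using that $B = \sqrt{h_\epsilon''((u)_{z_0,R})}$ is a fixed invertible matrix with norm comparable to $1$ by \hyperlink{C2}{(\textbf{C2})}. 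I expect the latter route to be the cleanest and to be the one the authors take; the main obstacle is not any single estimate but bookkeeping the scaling in $r$ correctly and making sure, in the volume-filling case, that the constraint $\nabla\bar u \in \Xi_0$ really propagates so that \hyperlink{C1p}{(\textbf{C1}$^\prime$)} can be applied.
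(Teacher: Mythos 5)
Your main route has a genuine gap at the coercivity step. Testing the frozen system with $(h_\epsilon'(\bar u)-h_\epsilon'(b))\eta^2$, the ``good'' term you produce is $\int\eta^2\,\nabla\bar u: h_\epsilon''(\bar u)\,A((u)_{z_0,R})\,\nabla\bar u$, in which the Hessian is evaluated at $\bar u$ but the diffusion matrix at the frozen state $(u)_{z_0,R}$. The conditions \hyperlink{C1}{(\textbf{C1})} and \hyperlink{C1p}{(\textbf{C1}$^{\prime}$)} only assert positivity of $\rho\cdot h_\epsilon''(y)A(y)\rho$ with both factors evaluated at the \emph{same} point $y$; since $A$ is neither symmetric nor positive definite, the mismatched product $h_\epsilon''(\bar u)A((u)_{z_0,R})$ has no sign in general, and there is no smallness of $\bar u-(u)_{z_0,R}$ available at this stage (the Caccioppoli inequality must hold unconditionally), so the discrepancy $h_\epsilon''(\bar u)\big(A((u)_{z_0,R})-A(\bar u)\big)$ cannot be absorbed either. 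This is exactly why the paper does \emph{not} use the relative entropy here: it replaces $h_\epsilon$ by its second-order Taylor polynomial at the frozen state and differentiates the quadratic form $(\bar u-b)\cdot h_\epsilon''((u)_{z_0,R})(\bar u-b)\,\eta^2$ -- equivalently, it left-multiplies the (already linear, constant-coefficient) equation \eqref{MS_system_frozen} by the constant matrix $h_\epsilon''((u)_{z_0,R})$ and tests with $(\bar u-b)\eta^2$. Then the elliptic term is $\int\eta^2\,\nabla\bar u: h_\epsilon''((u)_{z_0,R})A((u)_{z_0,R})\nabla\bar u$, to which \hyperlink{C1}{(\textbf{C1})} or \hyperlink{C1p}{(\textbf{C1}$^{\prime}$)} applies directly (in the volume-filling case with $(\partial_i\bar u_1,\ldots,\partial_i\bar u_n)\in\Xi_0$ and $(u)_{z_0,R}\in\Xi_1$, as you correctly anticipate); the cut-off, forcing and $\partial_t\eta$ terms are then handled exactly as in Lemma \ref{nonlinear_cacc}, and the chain-rule/duality issue you worry about disappears because the Hessian is a constant matrix.

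Your fallback route via the $v$-equation \eqref{MS_system_frozen_II} is not the authors' route and does not yield \eqref{linear_cacc_eq} as stated: that equation carries the source $F(u,\nabla u)=B\big[\nabla\cdot(A(u)-A((u)_{z_0,R}))\nabla u\big]$, so a Caccioppoli inequality for $v$ unavoidably contains $\nabla u$, and the transfer $\bar u=u+B^{-1}v$ reintroduces $u$ on both sides, whereas \eqref{linear_cacc_eq} controls $\nabla\bar u$ by $|\bar u-b|^2$ and $f(u)$ alone. The correct ``linear'' shortcut is the one described above: work with $\bar u$ itself, which already solves a constant-coefficient system with right-hand side $f(u)$, and symmetrize with the constant frozen Hessian $h_\epsilon''((u)_{z_0,R})$.
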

\noindent We remark that in the proof of Lemma \ref{linear_cacc}, we replace the role of the energy estimate in the proof of the classical Caccioppoli estimate for parabolic systems by a ``frozen-in'' entropy estimate.

Using Lemma \ref{linear_cacc}, we can then derive the required interior regularity estimates for $\bar{u}$, which we give in the below corollary. 

\begin{corollary}[Interior regularity estimates for solutions of (\ref{MS_system_frozen})] 
\label{constant_coeff}
We adopt the assumptions of Theorem \ref{Theorem_1}. Let $z_0 \in \Lambda$ and $\bar{u}$ be the weak solution of the frozen system \eqref{MS_system_frozen} on $\Cc_{R/8}(z_0)$. Then, for any point $z_0^{\prime} \in \Cc_{R/8} (z_0)$ and radii $0 < r < \tilde{R}<1$ such that $\Cc_{\tilde{R}}(z^{\prime}_0) \subset \Cc_{R/8}(z_0)$, we find that 
\begin{align}
\label{constant_coeff_reg_1}
\int_{\Cc_r(z^{\prime}_0)}|\nabla \bar{u}|^2 \dd z 
 \lesssim   \Big(\frac{r}{\tilde{R}}\Big)^{d+2}  \int_{\Cc_{\tilde{R}}(z^{\prime}_0)} |\nabla \bar{u}|^2 \, \dd z + \tilde{R}^{d+4}
\end{align}
and 
\begin{align}
\label{constant_coeff_reg_2}
\int_{\Cc_r(z^{\prime}_0)} \Big|\nabla \bar{u} - (\nabla \bar{u})_{z_0^{\prime}, r}  \Big|^2 \dd z
 \lesssim   \Big(\frac{r}{\tilde{R}}\Big)^{d+4}  \int_{\Cc_{\tilde{R}}(z^{\prime}_0)} \Big| \nabla \bar{u}-(\nabla \bar{u})_{z^{\prime}_0, \tilde{R}} \Big|^2 \dd z + \tilde{R}^{d+4}.
\end{align}
\end{corollary}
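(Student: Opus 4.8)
The plan is to treat \eqref{MS_system_frozen} as a constant-coefficient linear parabolic system with the bounded inhomogeneity $f(u)$ and to establish both estimates by the classical decomposition into a smooth homogeneous part and a lower-order correction carrying the right-hand side. Recalling the discussion around \eqref{MS_system_frozen_II}, after left-multiplication by $B=\sqrt{h_\epsilon''((u)_{z_0,R})}$ the coefficient matrix of the frozen system becomes a fixed matrix that is positive definite on a fixed subspace ($\R^n$ under \hyperlink{C1}{(\textbf{C1})}, the hyperplane $V$ under \hyperlink{C1p}{(\textbf{C1}$^{\prime}$)}), so I would from the start argue as if \eqref{MS_system_frozen} were genuinely uniformly elliptic with constant coefficients and deduce the estimates for $\bar u=u+B^{-1}v$ from those for $v$. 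Fixing $z_0'$ and $0<r<\tilde R<1$ as in the statement, the first step is to decompose $\bar u=v+w$ on $\Cc_{\tilde R}(z_0')$, where $w$ solves the frozen system with right-hand side $f(u)$ and vanishing parabolic boundary data on $\partial^P\Cc_{\tilde R}(z_0')$, and $v:=\bar u-w$ solves the homogeneous frozen system.

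For the correction $w$, I would test its equation with $w$ and combine coercivity of the frozen matrix, the Poincar\'e inequality (available since $w$ vanishes on the lateral boundary), the bound $\|f(u)\|_{L^\infty}\lesssim1$ from \hyperlink{H5}{(\textbf{H5})}, and $|\Cc_{\tilde R}(z_0')|\simeq\tilde R^{d+2}$ to get $\int_{\Cc_{\tilde R}(z_0')}|\nabla w|^2\dd z\lesssim\tilde R^{d+4}$; this is the source of the additive $\tilde R^{d+4}$ in \eqref{constant_coeff_reg_1} and \eqref{constant_coeff_reg_2}.

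The core step concerns the homogeneous part $v$. Since the coefficients are constant, every spatial difference quotient $D^h_k v$ again solves the homogeneous frozen system, so Lemma \ref{linear_cacc} with $f\equiv0$, iterated over difference quotients and then combined with the equation to trade spatial for temporal derivatives, should show that $v$ is smooth in the interior of $\Cc_{\tilde R}(z_0')$ with the scale-invariant bounds $\sup_{\Cc_{\tilde R/2}(z_0')}|\nabla v|^2\lesssim\tilde R^{-(d+2)}\int_{\Cc_{\tilde R}(z_0')}|\nabla v|^2\dd z$ and, applying the same bound to $\nabla v$ minus a constant (which again solves the homogeneous system), $\sup_{\Cc_{\tilde R/2}(z_0')}\big(|\nabla^2 v|^2+\tilde R^2|\partial_t\nabla v|^2\big)\lesssim\tilde R^{-(d+4)}\int_{\Cc_{\tilde R}(z_0')}|\nabla v-(\nabla v)_{z_0',\tilde R}|^2\dd z$. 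A parabolic Taylor expansion of $\nabla v$ about $z_0'$ at scale $r\le\tilde R/2$, integrated over $\Cc_r(z_0')$ (of volume $\simeq r^{d+2}$), then yields $\int_{\Cc_r(z_0')}|\nabla v|^2\dd z\lesssim(r/\tilde R)^{d+2}\int_{\Cc_{\tilde R}(z_0')}|\nabla v|^2\dd z$ and $\int_{\Cc_r(z_0')}|\nabla v-(\nabla v)_{z_0',r}|^2\dd z\lesssim(r/\tilde R)^{d+4}\int_{\Cc_{\tilde R}(z_0')}|\nabla v-(\nabla v)_{z_0',\tilde R}|^2\dd z$. Finally I would recombine $\nabla\bar u=\nabla v+\nabla w$ via the triangle inequality, using $\int_{\Cc_{\tilde R}(z_0')}|\nabla v|^2\lesssim\int_{\Cc_{\tilde R}(z_0')}|\nabla\bar u|^2+\tilde R^{d+4}$ for \eqref{constant_coeff_reg_1}; for \eqref{constant_coeff_reg_2} I would additionally exploit the minimality of the mean to replace $(\nabla\bar u)_{z_0',r}$ by $(\nabla v)_{z_0',r}$ and $(\nabla v)_{z_0',\tilde R}$ by $(\nabla\bar u)_{z_0',\tilde R}$, absorbing all $w$-contributions into the $\tilde R^{d+4}$-error. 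The range $\tilde R/2<r<\tilde R$ is trivial after enlarging the constant.

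The main obstacle is the interior-smoothness estimate for $v$: one has to run the difference-quotient/Caccioppoli bootstrap for the homogeneous constant-coefficient system while carefully tracking the powers of $\tilde R$ in the parabolic scaling, and --- in the volume-filling case --- carry out this argument within the fixed subspace $V$ on which the frozen matrix is coercive rather than on all of $\R^n$. All of this is classical, being essentially the constant-coefficient input underlying \cite{GS_82}; the one genuinely non-standard ingredient, the replacement of the energy estimate by a frozen-in entropy estimate, has already been isolated in Lemma \ref{linear_cacc}.
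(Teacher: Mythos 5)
Your proposal is correct and follows essentially the same route as the paper's proof: the same splitting of $\bar u$ into a homogeneous frozen part plus a correction with right-hand side $f(u)$ and zero parabolic boundary data, the same entropy-weighted testing (left-multiplication by $B$, coercivity of the glued entropy, Poincar\'e) to bound the correction by $\tilde R^{d+4}$, and the same bootstrap of Lemma \ref{linear_cacc} with $f\equiv 0$ to interior smoothness of the homogeneous part, followed by recombination via the triangle inequality and minimality of the mean. The only differences are cosmetic (working directly at scale $\tilde R$ instead of normalizing $\tilde R=1$ and rescaling, and phrasing the oscillation decay via a Taylor expansion rather than via Lemma \ref{Poincare} combined with the sup bound on $\nabla^2$ of the homogeneous part).
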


We now indicate how to transfer the regularity from $\bar{u}$ onto $u$ in order to obtain \eqref{excess_decay}. First, notice that we may assume that $r< R/16$. Then, the triangle inequality allows us to write
\begin{align}
\label{triangle}
\int_{\Cc_{2r}(z_0)} |\nabla u |^2 \dd z \lesssim \int_{\Cc_{2r}(z_0)} |\nabla \bar{u}|^2 \dd z + \int_{\Cc_{R/8}(z_0)} |\nabla \bar{v}|^2 \dd z,
\end{align}
where we have introduced the error $\bar{v}:= \bar{u} - u$. Using \eqref{constant_coeff_reg_1} from Corollary \ref{constant_coeff} and the additional observation that 
\begin{align}
\label{energy_estimate_use}
\int_{\Cc_{R/8}(z_0)} |\nabla \bar{u}|^2 \dd z \lesssim  \int_{\Cc_{R/8}(z_0)} |\nabla u|^2 \dd z,
\end{align}
which is shown in Section \ref{Proof_Theorem_1}, we find that \eqref{triangle} becomes 
\begin{align}
\label{triangle_2}
\int_{\Cc_{2r}(z_0)} |\nabla u |^2 \dd z \lesssim   \Big( \frac{r}{R} \Big)^{d+2}\int_{\Cc_{R/8}(z_0)} |\nabla u|^2 \dd z + R^{d+4} + \int_{\Cc_{R/8}(z_0)} |\nabla \bar{v}|^2 \dd z.
\end{align}

Let us now treat the term in \eqref{triangle_2} involving $\overline{v}$. To do this, we notice that $\bar{v}$ is a weak solution of 
\begin{align}
\label{MS_system_frozen_aux}
\begin{split}
\partial_t B \bar{v} - \nabla \cdot B A( (u)_{z_0,R} ) \nabla \bar{v} & = \nabla \cdot  B (A(  (u)_{z_0,R}) - A(u)) \nabla  u  \qquad  \qquad \textrm{ in}  \quad \Cc_{R/8}(z_0),\\
\bar{v}& = 0 \quad \quad \quad  \quad \qquad \quad \quad \quad  \quad \quad \quad \quad \quad  \quad \, \, \quad\textrm{on} \quad \partial^P \Cc_{R/8}(z_0),
\end{split}
\end{align}
where we again use $B = \sqrt{h^{\prime\prime}_{\epsilon}((u)_{z_0,R})} \in \R^{n \times n}$. To obtain the desired estimate for $\bar{v}$ we then test \eqref{MS_system_frozen_aux} with $B \bar{v}$ and use the properties of the glued entropy, to write
\begin{align}
\label{error_estimate_frozen}
\begin{split}
\int_{\Cc_{R/8}(z_0)} |\nabla \bar{v} |^2 \dd z  \lesssim  \Big( \int_{\Cc_{R/8}(z_0)}   |\nabla  u|^p  \dd z  \Big)^{\frac{2}{p}} \Big(  \int_{\Cc_{R/8}(z_0)}  |A((u)_{z_0,R}) - A(u)|^{\frac{2p}{p-2}} \Big)^{\frac{p-2}{p}},
\end{split}
\end{align}
for $p>2$. 

The right-hand side of \eqref{error_estimate_frozen} is exactly analogous to the classical setting --see \cite{GS_82}.  And, just as in the classical setting, to handle \eqref{error_estimate_frozen} we now rely on the solution $u$ of \eqref{cross_diffusion_system} satisfying a reverse H\"{o}lder inequality. In particular, in Section \ref{reverse_holder_section} we show that:

\begin{prop}[Reverse H\"{o}lder inequality for solutions of (\ref{cross_diffusion_system})]
\label{reverse_holder}
We adopt the assumptions of Theorem \ref{Theorem_1}. Fix $ z_0 \in \Lambda$ and $R>0$ such that $\Cc_{4R}(z_0) \subset \Lambda$. Then, there exists $p >2$ such that $\nabla u \in L^p(\Cc_R(z_0))$ and
\begin{align}
\label{reverse_holder_relation}
\left( \fint_{\Cc_R(z_0)} |\nabla u|^p \dd z \right)^{\frac{1}{p}} \lesssim \left( \fint_{\Cc_{4R}(z_0)} |\nabla u|^2 \dd z \right)^{\frac{1}{2}} + R.
\end{align}
\end{prop}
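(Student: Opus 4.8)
The plan is to prove Proposition \ref{reverse_holder} by the classical route: combine the nonlinear Caccioppoli inequality (Lemma \ref{nonlinear_cacc}) with a Poincar\'e--Sobolev inequality to produce a \emph{reverse doubling} (Gehring-type) inequality on parabolic cylinders, and then invoke the Gehring lemma in its parabolic form to self-improve the integrability exponent from $2$ to some $p>2$. Concretely, I would first record the Caccioppoli estimate: for any $z_1 \in \Lambda$ and $r>0$ with $\Cc_{2r}(z_1)\subset\Lambda$,
\begin{align}
\label{cacc_for_RH}
\int_{\Cc_r(z_1)}|\nabla u|^2\dd z \lesssim \frac{1}{r^2}\int_{\Cc_{2r}(z_1)}|u-(\tilde u)_{x_1,r}|^2\dd z + r^{d+2}\|f(u)\|_{L^\infty}^2,
\end{align}
which follows from \eqref{intermediate_cacc} together with the bound \eqref{compare} coming from \hyperlink{C2}{(\textbf{C2})} (the relative glued entropy $h_\epsilon(u|b)$ is comparable to $|u-b|^2$) and from the boundedness of $u$, $A$, and $f$ via \hyperlink{H4}{(\textbf{H4})}, \hyperlink{H5}{(\textbf{H5})}. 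The weighted average appearing here is handled exactly as in \cite{GS_82}; one may freely replace $(\tilde u)_{x_1,r}$ by $(u)_{z_1,r}$ up to a harmless constant.

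The second ingredient is a parabolic Poincar\'e--Sobolev inequality of the form
\begin{align}
\label{poincare_sobolev_RH}
\left(\fint_{\Cc_{2r}(z_1)}\Big|\frac{u-(\tilde u)_{x_1,r}}{2r}\Big|^2\dd z\right)^{1/2} \lesssim \left(\fint_{\Cc_{4r}(z_1)}|\nabla u|^{2\theta}\dd z\right)^{1/(2\theta)}
\end{align}
for some $\theta = \theta(d)\in(0,1)$ (e.g. $2\theta = \frac{2(d+2)}{d+4}$), where the presence of the time derivative $\partial_t u\in L^2(0,T;H^1(\Omega)')$ in Definition \ref{weak_solution} is what controls oscillation in time. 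Such a Poincar\'e inequality for functions with a distributional time derivative is standard in parabolic regularity theory (it can be proved via the weak formulation tested against a suitable function, or cited from a parabolic-Sobolev reference); I would state it as a lemma, perhaps deferring to Section \ref{poincare}. Chaining \eqref{cacc_for_RH} and \eqref{poincare_sobolev_RH} and normalizing by volumes gives, after absorbing the $f$-term (which contributes an additive $R$ on the right, accounting for the $``+R"$ in \eqref{reverse_holder_relation}),
\begin{align}
\label{reverse_doubling_RH}
\fint_{\Cc_r(z_1)}|\nabla u|^2\dd z \lesssim \left(\fint_{\Cc_{4r}(z_1)}|\nabla u|^{2\theta}\dd z\right)^{1/\theta} + 1,
\end{align}
uniformly for all small cylinders with $\Cc_{4r}(z_1)\subset\Cc_{4R}(z_0)$.

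Finally, \eqref{reverse_doubling_RH} is precisely the hypothesis of the Gehring--Giaquinta--Modica lemma in its parabolic-cylinder version (see, e.g., \cite[Ch. V]{GS_82} or the standard references on higher integrability): there exists $p>2$, depending only on $d$, $n$, the ellipticity/continuity data, and $\epsilon$, such that $\nabla u\in L^p_{\loc}$ with the quantitative bound \eqref{reverse_holder_relation}. The main obstacle I anticipate is not the Gehring machinery, which is black-boxed, but rather two bookkeeping points: (i) making the parabolic Poincar\'e inequality \eqref{poincare_sobolev_RH} genuinely rigorous given only $\partial_t u\in L^2(0,T;H^1(\Omega)')$ rather than a pointwise-in-time bound --- one must test the equation carefully and use the weighted cut-offs $\chi_{x_0,R}$ to localize --- and (ii) tracking the scaling of the $f$-term and the geometry of the nested cylinders ($R/8$, $2r$, $4r$, $4R$) so that the constants remain uniform and the additive error genuinely produces the harmless $``+R"$ rather than a term that degrades as $R\to 0$. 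Since $u$ is assumed bounded and $f$ is bounded, these error terms are all lower-order and the argument closes.
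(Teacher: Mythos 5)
Your overall architecture (Caccioppoli $+$ Poincar\'e--Sobolev $+$ Gehring) is the right family of ideas and is indeed what the paper does, but the step you black-box as ``standard'' --- the parabolic Poincar\'e--Sobolev inequality \eqref{poincare_sobolev_RH} with a sub-quadratic gradient exponent $2\theta<2$ on the right --- is precisely the crux, and as stated it does not follow from the ingredients you cite. The information $\pa_t u\in L^2(0,T;H^1(\Omega)')$, or equivalently testing the equation with the cut-offs $\chi_{x_0,R}$, only controls the oscillation in time of the \emph{weighted spatial averages} $(\tilde u)_{x_0,r}(t)$ (this is Lemma \ref{Poincare} / \eqref{P_april_1}, and it does come with an $L^1$-in-gradient bound, so that part is fine). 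The problem is the slice-wise spatial part: spatial Sobolev--Poincar\'e at fixed $t$ gives $\big(\int_{B}|u-(\tilde u)(t)|^{2}\dd x\big)^{1/2}\lesssim r\big(\int_B|\nabla u(t)|^{2\theta}\dd x\big)^{1/(2\theta)}$, but when you integrate this in time the exponent $2/(2\theta)>1$ sits \emph{outside} the time integral, and Jensen runs the wrong way; you cannot conclude \eqref{poincare_sobolev_RH}. This is exactly the known parabolic obstruction that Giaquinta--Struwe address, and it is why no gain-type parabolic Poincar\'e inequality with the same homogeneity as the elliptic one is available here.

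The paper's proof repairs this with two ingredients you do not have: (i) Lemma \ref{time_reg}, a sup-in-time estimate $\sup_{t}\int_{B_R}|u-(\tilde u)_{x_0,R}(t)|^2\dd x\lesssim\int_{\Cc_{2R}}|\nabla u|^2\dd z+\dots$, which is proved with the glued-entropy machinery (not merely from the distributional time derivative); and (ii) the interpolation argument \eqref{May_13_1}--\eqref{May_13_4}, which splits $\int|u-\tilde u|^2$ as (sup-in-time factor)$\times$(time integral of a spatially interpolated factor) and then uses Young's inequality. The price is that the resulting reverse-H\"older hypothesis \eqref{intermediate_0} unavoidably carries an extra term $\gamma\fint_{\Cc_{4R}}|\nabla u|^2$ on the right, so the plain Gehring lemma you invoke is not applicable; one needs the Giaquinta--Modica variant (Proposition \ref{technical}) with the smallness condition $\gamma<\gamma_0$. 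Your inequality \eqref{reverse_doubling_RH}, written without that $\gamma$-term, hides exactly the part of the argument that requires work. (Minor bookkeeping: your Caccioppoli \eqref{cacc_for_RH} should carry $r^{d+4}\|f(u)\|^2_{L^\infty}$, as in Lemma \ref{nonlinear_cacc}; with your $r^{d+2}$ normalization the $f$-contribution would not scale down to the harmless $+R$ after rescaling.) If you replace your Poincar\'e step by the sup-in-time bound plus interpolation and switch to the $\gamma$-tolerant Gehring lemma, your outline becomes the paper's proof.
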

\noindent Thereby, by choosing the appropriate $p>2$ in \eqref{error_estimate_frozen} we find that 
\begin{align}
\label{error_estimate_frozen_3}
 \int_{\Cc_{R/8}(z_0)} |\nabla \bar{v} |^2 \dd z \lesssim   \Big( \int_{\Cc_{R/2}(z_0)}   |\nabla  u|^2  \dd z  + R^{d+4} \Big) \Big(  \fint_{\Cc_{R/8}(z_0)}  |A((u)_{z_0,R}) - A(u)|^{\frac{2p}{p-2}} \Big)^{\frac{p-2}{p}} 
\end{align}

As we will justify in Section \ref{Proof_Theorem_1}, by combining \eqref{triangle_2} and \eqref{error_estimate_frozen_3} and using that $z_0 \in \Lambda_0$ with the characterization \eqref{set_condition_1}, we obtain \eqref{excess_decay} with $z_0^{\prime} = z_0$ for $R_0>0$ small enough. We then argue that \eqref{excess_decay} holds uniformly in a neighborhood of $z_0$.

\section{Proof of Proposition \ref{glued_entropy}: Entropy condition for the glued entropy}
\label{proof_Prop_1}

Our argument for Proposition \ref{glued_entropy} is where we formally capitalize on the intuition that we have given in Section \ref{construction} (and have encoded in the conditions \hyperlink{H3}{(\textbf{H3})} and \hyperlink{H3p}{(\textbf{H3$^{\prime}$})}). Here is the argument:

\begin{proof}
We first prove $(i)$ and assume that \hyperlink{H1}{(\textbf{H1})}, \hyperlink{H2}{(\textbf{H2})}, and \hyperlink{H3}{(\textbf{H3})} are satisfied. Let $y\in \dom$ be arbitrary.  For $\epsilon>0$ we define the sets:
\begin{align*}
\Ss_{\epsilon}(y) := \left\{i\in \Ss_1 \, : \, y_i \geq 2 \epsilon \right\}  \cup \Ss_2
\qquad \textrm{and} \qquad 
\Ss_{\epsilon}^c(y) := \left\{i\in \Ss_1 \, : \, y_i < 2 \epsilon \right\},
\end{align*}
where we have used the convention \eqref{case_1_S}. By the assumption \hyperlink{H2}{(\textbf{H2})} and the definitions \eqref{defn_glued_i} and \eqref{defn_glued} of $h_{\epsilon}$, the statement of the proposition is trivially true for all $y\in\mathcal D$ such that $\Ss_\epsilon(y)=\Ss$. 
Therefore, throughout this argument we will assume that $\Ss_\epsilon^c(y)\neq\emptyset$.

Fixing an arbitrary $\kappa >0$, we notice that by \hyperlink{H1}{(\textbf{H1})} there exists $\beta_\kappa>0$ such that, for any $i \in \Ss_1$, 
\begin{align}
\inf_{y_i \in (0,\beta_{\kappa}]} |h_i''(y_i)|\geq \kappa.
\label{lb.d2s}
\end{align}
Also, notice that \eqref{hp.A} of \hyperlink{H3}{(\textbf{H3})} implies
	\begin{align}\label{convergence.A}
\max_{i \in \Ss_1,j = 1, \ldots, n} |A_{ij}(y)-a_i(y)\delta_{ij}|  |h^{\prime \prime}_{\epsilon,i} (y)| \lesssim 1,
	\end{align}
where we have used that $h^{\prime \prime}_{\epsilon,i}(y) \lesssim h^{\prime \prime}_i(y)$. The latter observation follows from \eqref{hessian} in conjunction with the polynomial blow-up of each $h_i^{\prime \prime}$ that we have assumed in Case 1 of \hyperlink{H1}{(\textbf{H1})}.

Now, fix $\epsilon< \beta_{\kappa}/2$ and for $\rho\in \mathbb R^n$ define $\hat \rho \in\R^n$ as
	\[
	\hat \rho = (\hat{\rho}_1,\ldots,\hat{\rho}_n) \qquad \textrm{with} \qquad 
	\hat \rho_i := \begin{cases}
	\rho_i,& i\in \Ss_{\epsilon}(y),\\ 0, & i\in \Ss_{\epsilon}^c(y).
	\end{cases}\]
To show \eqref{prop_1_result}, we use the decomposition
\begin{align}
\label{decompose}
\rho \cdot h_\epsilon''(y)A(y)\rho  = (\rho-\hat \rho)\cdot h_\epsilon''(y)A(y)\rho + \hat \rho \cdot h_\epsilon''(y)A(y)\rho
\end{align}
and bound the two terms on the right-hand side separately. 

Starting with the first term of \eqref{decompose}, we write
	\begin{align}
	\label{calc_1}
	\begin{split}
	(\rho-\hat \rho)\cdot h_\epsilon''(y)A(y)\rho
	&=\sum_{i\in\Ss_{\epsilon}^c(y)}\sum_{j=1}^n \rho_i h''_{\epsilon,i}(y_i)A_{ij}(y)\rho_j	\\
	& = \sum_{i\in\Ss_{\epsilon}^c(y)}\rho_i h_{\epsilon,i}''(y_i)a_i(y)\rho_i+\sum_{i\in\Ss_{\epsilon}^c(y)}\sum_{j=1}^n \rho_i h_{\epsilon,i}''(y_i)(A_{ij}(y)-a_i(y)\delta_{ij})\rho_j.
	\end{split}
	\end{align}
By \eqref{hessian} and \hyperlink{H1}{(\textbf{H1})}, we have that $h_{\epsilon,i}''(y_i) \geq h_i''(2 \epsilon) \geq \kappa$ for $i\in\Ss_\epsilon^c(y)$. Using \eqref{mua} of  \hyperlink{H3}{(\textbf{H3})} and \eqref{convergence.A} it follows that
	\begin{align}
	\label{est.1}
	\begin{split}
	(\rho-\hat \rho)\cdot h_\epsilon''(y)A(y)\rho&\geq 
	\mu \kappa \sum_{i\in\Ss_{\epsilon}^c(y)}|\rho_i|^2 
	-C\Big(\sum_{i\in\Ss_{\epsilon}^c(y)}  |\rho_i| \Big) \Big(\sum_{j=1}^n |\rho_j| \Big)
	\\  &\geq \mu \kappa |\rho-\hat \rho|^2-  C n |\rho||\rho-\hat \rho|
	\\ &\geq (\mu \kappa- Cn) |\rho-\hat \rho|^2
	- Cn|\hat \rho||\rho-\hat \rho|.
	\end{split}
	\end{align}

We then treat the second term on the right-hand side of \eqref{decompose}. Using \eqref{entropy} of \hyperlink{H1}{(\textbf{H1})}, we write	
\begin{align*}
\begin{split}
  \hat \rho \cdot h''(y)A(y) \rho  = & \hat \rho \cdot h''(y)A(y)\hat \rho
+ \sum_{i \in \Ss_2} \sum_{j\in \Ss^c_{\epsilon}(y)} \rho_i h_i''(y)A_{ij}(y) \rho_j \\
& + \sum_{i\in\Ss_{\epsilon} (y) \setminus \Ss_2}\sum_{j\in \Ss^c_{\epsilon}(y)} \rho_i h_i''(y)(A_{ij}(y) - a_i(y)\delta_{ij})\rho_j.
\end{split}
	\end{align*}
Using \eqref{lb.d2hA} of \hyperlink{H2}{(\textbf{H2})} for the first term, $h''_i$ being bounded on $\overline{\dom}$ for $i \in \Ss_2$ for the second, and \eqref{hp.A} of \hyperlink{H3}{(\textbf{H3})} for the last summand above, we deduce
\begin{align}\label{est.2}
\hat \rho \cdot h''(y)A(y) \rho
\geq \lambda |\hat \rho|^2 -Cn|\rho-\hat \rho||\hat \rho|.
\end{align}
By \eqref{hessian} and the definition of $\Ss_{\epsilon}(y)$, we conclude that 
	\begin{align}
	\label{est.3}
	\hat \rho \cdot h_\epsilon''(y)A(y)\rho 
	=\hat \rho \cdot h''(y)A(y)\rho \geq 
	 \lambda |\hat \rho|^2 -Cn |\hat \rho| |\rho-\hat \rho| .
	\end{align}
	
Combining \eqref{decompose}, \eqref{est.1}, and \eqref{est.3} and using Young's inequality leads to
	\begin{align*}
	\rho \cdot h_\epsilon''(y)A(y) \rho \geq \frac{\lambda}{2} |\hat \rho|^2+\left(\mu \kappa- C(n, \lambda)\right)|\rho-\hat \rho|^2.
	\end{align*}
Choosing $\kappa>0$ sufficiently large yields \eqref{prop_1_result} for $y \in \dom$ --\eqref{prop_1_result} holds for $y \in \overline{\dom}$ since $h_{\epsilon}^{\prime\prime} A \in C^0(\overline{\dom})$.

\medskip

We now move-on to the ``explicit'' volume-filling case: The argument for $(ii)$ is almost the same as for $(i)$. The only minor difference is the assumption \hyperlink{H2p}{(\textbf{H2}$^{\prime}$)} (as opposed to \hyperlink{H2}{(\textbf{H2})}), which results in the analogue of \eqref{est.2} being 
\begin{align}
\label{case_2_1}
\begin{split}
\hat \rho \cdot h''(y)A(y) \rho
&\geq \lambda |\hat{\rho}|^2 - \delta\Big( \sum_{i=1}^n  \hat{\rho}_i \Big)^2 -Cn|\rho-\hat \rho||\hat \rho|\\
& = \lambda |\hat{\rho}|^2 - \delta \Big(\sum_{i=1}^n  \hat{\rho}_i \Big) \Big(\sum_{j=1}^n (\hat{\rho}_j - \rho_j ) \Big) -Cn|\rho-\hat \rho||\hat \rho|\\
& \geq \lambda |\hat{\rho}|^2  -(C+ \delta)n |\rho-\hat \rho||\hat \rho|,
\end{split}
\end{align}
where we have used that $\rho \in \Xi_0$ and $y \in \dom \cap \Xi_1$. Using \eqref{case_2_1} within the argument from part $(i)$ finishes the proof.  

\end{proof}

\section{Proofs of the Main Results: Theorems \ref{theorem_MS} -- \ref{Theorem_examples_5}}
\label{proofs_main_sec}

\subsection{``Implicit'' volume-filling systems: Proof of Theorem \ref{theorem_MS}}
\label{implicit_systems}

By ``implicit'' volume-filling system, we mean a system of the form \eqref{Maxwell_Stefan}, which requires the  inversion of a flux-gradient relation in order to be written in the form of \eqref{cross_diffusion_system}. For our partial regularity theory to be applicable it is necessary that this inversion be possible --in particular, we are able to treat systems of the form \eqref{Maxwell_Stefan} with a flux-gradient relation \eqref{flux_gradient_condition}, satisfying \eqref{volume_filling_constraint}, \eqref{ic_volume_filling}, and \eqref{reproduction_rates}, if the following condition is satisfied:

\begin{itemize}[leftmargin=.5in]
\item[\hyperlink{H0}{(\textbf{H0})}]  For any $y \in \dom$, $M(y) |_{\Xi_0} =: \tilde{M}(y)$ ($\Xi_0$ defined in \eqref{defn_subspace}) is invertible and $\| M(y) \|_2 \lesssim 1$ uniformly for $y \in \overline{\dom}$. Here, $\| \cdot \|_2$ denotes the largest singular value.
\end{itemize}

\noindent Contingent to \hyperlink{H0}{(\textbf{H0})}, an ``implicit'' system \eqref{Maxwell_Stefan} with flux-gradient relation \eqref{flux_gradient_condition} can be re-written as
\begin{align}
\label{rewritten_implicit}
\partial_t u - \nabla \cdot (\tilde{M}(u))^{-1} \nabla u = f(u), \quad \text{in} \quad \Omega\times (0,T),
\end{align}
which is, of course, now in ``explicit'' form. 

To ensure the applicability of our partial regularity theory we must then have access to a glued entropy in the sense of Definition \ref{glued_def} for the system \eqref{rewritten_implicit}. Towards this, we must make the following assumptions on $M$ in the flux-gradient relation \eqref{flux_gradient_condition}:

\smallskip

\paragraph{Conditions for ``implicit'' volume-filling systems:}

\begin{itemize}[leftmargin=.5in]
\item[\hyperlink{H2pp}{(\textbf{H2}$^{\prime\prime}$)}] \hyperlink{H2p}{(\textbf{H2}$^{\prime}$)} should hold for $A$ replaced by $M$.\\
\item[\hyperlink{H3pp}{(\textbf{H3}$^{\prime\prime}$)}]\hyperlink{H3p}{(\textbf{H3}$^{\prime}$)} should hold for $A$ replaced by $M$ and, accordingly, we ask for the existence of functions $m_i \in C^0(\overline{\dom})$ (instead of $a_i$).
\end{itemize}
\medskip

We obtain the following result:

\begin{prop}
Assume a system of the form \eqref{Maxwell_Stefan} with a flux-gradient relation \eqref{flux_gradient_condition}, satisfying \eqref{volume_filling_constraint}, \eqref{ic_volume_filling}, and \eqref{reproduction_rates}, and which satisfies \hyperlink{H0}{(\textbf{H0})}. Furthermore, assume that \hyperlink{H1}{(\textbf{H1})}, \hyperlink{H2pp}{(\textbf{H2}$^{\prime\prime}$)}, and \hyperlink{H3pp}{(\textbf{H3}$^{\prime\prime}$)} hold. Then, there exist $\epsilon > 0$ and $\lambda>0$ such that 
	\begin{align}
	\label{prop_3_result}
 \rho \cdot h_\epsilon''(y)A(y) \rho \geq \lambda |\rho|^2,
	\end{align}
	for any $\rho \in \Xi_0$ and $y \in \overline{\dom} \cap \Xi_1$, with $A(\cdot) := (\tilde{M}(\cdot))^{-1}$.\\
\end{prop}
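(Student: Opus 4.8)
The goal is to verify condition \hyperlink{C1p}{(\textbf{C1}$^{\prime}$)} for the glued entropy density $h_\epsilon$ relative to the \emph{explicit} diffusion matrix $A = (\tilde M)^{-1}$, given that the hypocoercivity and ``near-diagonal'' structure are known for the \emph{implicit} matrix $M$. The plan is to reduce this statement to part $(ii)$ of Proposition \ref{glued_entropy} by transferring the hypotheses \hyperlink{H2pp}{(\textbf{H2}$^{\prime\prime}$)} and \hyperlink{H3pp}{(\textbf{H3}$^{\prime\prime}$)} on $M$ into the hypotheses \hyperlink{H2p}{(\textbf{H2}$^{\prime}$)} and \hyperlink{H3p}{(\textbf{H3}$^{\prime}$)} on $A$, so that the already-proved Proposition applies verbatim. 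Concretely: first I would fix $y \in \dom \cap \Xi_1$, note that \hyperlink{H0}{(\textbf{H0})} makes $\tilde M(y) := M(y)|_{\Xi_0}$ a well-defined invertible endomorphism of $\Xi_0$, and observe that $A(y) = (\tilde M(y))^{-1}$ is to be read as acting on $\Xi_0$ (this matches the fact that, under the volume-filling constraint, the relevant gradient vectors $\rho_i = (\partial_i u_1, \ldots, \partial_i u_n)$ lie in $\Xi_0$, as explained after Definition \ref{glued_def}).

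The first substantive step is to deduce \hyperlink{H2p}{(\textbf{H2}$^{\prime}$)} for $A$ from \hyperlink{H2pp}{(\textbf{H2}$^{\prime\prime}$)} for $M$. For $\rho \in \Xi_0$ write $\rho = \tilde M(y)\sigma$ with $\sigma = A(y)\rho \in \Xi_0$; then
\begin{align*}
\rho \cdot h''(y) A(y)\rho = (\tilde M(y)\sigma)\cdot h''(y)\sigma.
\end{align*}
The obstacle here is that \hyperlink{H2pp}{(\textbf{H2}$^{\prime\prime}$)} gives a lower bound on $\sigma \cdot h''(y)M(y)\sigma$, which is \emph{not} symmetric to the quantity above unless one controls the interplay between $h''(y)$ and $M(y)$. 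The standard device is to use that $h''(y)$ is a positive diagonal matrix (by \hyperlink{H1}{(\textbf{H1})}), hence has a square root $P(y) = (h''(y))^{1/2}$, and that the entropy condition $\sigma \cdot h''(y)M(y)\sigma \geq \lambda|\sigma|^2 - \delta(\sum_i\sigma_i)^2$ says precisely that the symmetric part of $P(y)M(y)P(y)^{-1}$ (conjugated appropriately) is coercive on $\Xi_0$. Since a matrix and its inverse have symmetric parts with the same sign of coercivity on a fixed subspace (more precisely, coercivity of the symmetric part of an invertible $T$ on a $T$-invariant subspace, together with a uniform bound $\|T\|_2 \lesssim 1$ from \hyperlink{H0}{(\textbf{H0})}, yields coercivity of the symmetric part of $T^{-1}$ on that subspace with constant $\lambda/\|T\|_2^2$), one transfers the lower bound from $P M P^{-1}$ to $P M^{-1} P^{-1} = P A P$, i.e.\ to $A$. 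The $\delta$-term survives because on $\Xi_1$ the quadratic form $(\sum_i \rho_i)^2$ is handled exactly as in \eqref{case_2_1}.

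The second step is to deduce \hyperlink{H3p}{(\textbf{H3}$^{\prime}$)} for $A$ from \hyperlink{H3pp}{(\textbf{H3}$^{\prime\prime}$)} for $M$: one must produce continuous functions $a_i$ with $\inf a_i > 0$ and $\max_{i\in\Ss_1,j}|A_{ij}(y) - a_i(y)\delta_{ij}||h_i''(y_i)| \lesssim 1$ on $\overline\dom\cap\Xi_1$, given the analogous statement for $M$ with functions $m_i$. The natural guess is $a_i(y) = m_i(y)^{-1}$. The key algebraic fact is that if $M(y) = \mathrm{diag}(m_i(y)) + E(y)$ on $\Xi_0$ with $\|E_{ij}(y)\|\,|h_i''(y_i)| \lesssim 1$ for $i\in\Ss_1$, and $\mathrm{diag}(m_i)$ is uniformly invertible (by \eqref{mua} for the $m_i$) with $\|M\|_2 \lesssim 1$, then $A = M^{-1} = \mathrm{diag}(m_i^{-1}) + \tilde E$ with $\tilde E$ satisfying the same kind of weighted bound; this follows from the Neumann-series/resolvent identity $M^{-1} - \mathrm{diag}(m_i)^{-1} = -\mathrm{diag}(m_i)^{-1} E M^{-1}$ together with $\|M^{-1}\|_2 \lesssim 1$ (again from the uniform spectral bound in \hyperlink{H0}{(\textbf{H0})} applied to $\tilde M$ — note this must be argued, e.g.\ via Perron–Frobenius as in \cite{B_2010}, since \hyperlink{H0}{(\textbf{H0})} only asserts invertibility and an upper bound on $\|M\|_2$; a uniform \emph{lower} bound on the smallest singular value of $\tilde M$ on $\Xi_0$ is needed and should be extracted from \hyperlink{H2pp}{(\textbf{H2}$^{\prime\prime}$)}). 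Continuity of $a_i = m_i^{-1}$ and of $A_{ij}$ on $\overline\dom\cap\Xi_1$ follows from continuity of $M$ and non-vanishing of $\det\tilde M$.

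With \hyperlink{H1}{(\textbf{H1})}, \hyperlink{H2p}{(\textbf{H2}$^{\prime}$)}, and \hyperlink{H3p}{(\textbf{H3}$^{\prime}$)} now established for $A = (\tilde M)^{-1}$ on $\overline\dom\cap\Xi_1$, part $(ii)$ of Proposition \ref{glued_entropy} applies directly and yields \eqref{prop_3_result}. I expect the main obstacle to be the matrix-inversion lemma at the heart of both steps: showing that the structural properties (coercivity of the $h''$-weighted symmetric part on $\Xi_0$, and $h''$-weighted closeness to a diagonal matrix) are stable under passing from $M$ to $\tilde M^{-1}$. This is where the uniform two-sided control on $\tilde M$ restricted to $\Xi_0$ — upper bound from \hyperlink{H0}{(\textbf{H0})}, lower bound distilled from \hyperlink{H2pp}{(\textbf{H2}$^{\prime\prime}$)} — is indispensable, and where one should be careful that all estimates are uniform in $y \in \overline\dom\cap\Xi_1$ so that the implied constants in ``$\lesssim$'' remain admissible.
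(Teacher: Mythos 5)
Your reduction runs in the opposite direction from the paper's, and as written it has genuine gaps. The paper does not transfer the hypotheses from $M$ to $A$: it applies Proposition \ref{glued_entropy}(ii) with $M$ in the role of the diffusion matrix (legitimate, since \hyperlink{H2pp}{(\textbf{H2}$^{\prime\prime}$)} and \hyperlink{H3pp}{(\textbf{H3}$^{\prime\prime}$)} are precisely \hyperlink{H2p}{(\textbf{H2}$^{\prime}$)} and \hyperlink{H3p}{(\textbf{H3}$^{\prime}$)} for $M$), obtaining $\rho\cdot h_\epsilon''(y)M(y)\rho\geq\lambda|\rho|^2$ for $\rho\in\Xi_0$, and then transfers the \emph{conclusion} in two lines: for $\rho\in\Xi_0$ set $\xi=A(y)\rho\in\Xi_0$, so that by symmetry of $h_\epsilon''$ one has $\rho\cdot h_\epsilon''(y)A(y)\rho=\xi\cdot h_\epsilon''(y)M(y)\xi\geq\lambda|\xi|^2$, while $|\rho|^2=|M(y)\xi|^2\leq\|M(y)\|_2^2|\xi|^2\lesssim|\xi|^2$ by \hyperlink{H0}{(\textbf{H0})}. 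Your plan instead requires establishing \hyperlink{H2p}{(\textbf{H2}$^{\prime}$)} and \hyperlink{H3p}{(\textbf{H3}$^{\prime}$)} for $A=(\tilde M)^{-1}$ so that Proposition \ref{glued_entropy}(ii) can be run on $A$, and this is where it breaks down. First, the proof of Proposition \ref{glued_entropy}(ii) applies the hypocoercivity hypothesis to the truncated vector $\hat\rho$, which in general does \emph{not} lie in $\Xi_0$ (this is exactly why \hyperlink{H2p}{(\textbf{H2}$^{\prime}$)} is formulated for all $\rho\in\R^n$; cf.\ \eqref{case_2_1}). But $A$ is only defined on $\Xi_0$, and your coercivity-transfer lemma only yields control on that ($T$-invariant) subspace; to get \hyperlink{H2p}{(\textbf{H2}$^{\prime}$)} for $A$ in the form the proposition actually uses, you would have to choose an extension of $(\tilde M)^{-1}$ off $\Xi_0$ and prove uniform hypocoercivity for the extension, which your sketch does not supply and which does not follow from the restricted statement.

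Second, your \hyperlink{H3p}{(\textbf{H3}$^{\prime}$)} transfer via the resolvent identity needs a uniform bound on $\|(\tilde M)^{-1}\|_2$ on $\overline{\dom}\cap\Xi_1$. \hyperlink{H0}{(\textbf{H0})} gives only an upper bound on $\|M\|_2$, and the lower bound you propose to extract from \hyperlink{H2pp}{(\textbf{H2}$^{\prime\prime}$)} degenerates near $\partial\dom$: from $\rho\cdot h''(y)M(y)\rho\geq\lambda|\rho|^2$ on $\Xi_0$ one only gets $|M(y)\rho|\geq\lambda|\rho|^2/|h''(y)\rho|$, and $h''$ blows up at the boundary. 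To make this uniform you essentially need the glued-entropy coercivity for $M$ --- i.e.\ the first step of the paper's proof --- and once that is in hand the direct conclusion-transfer above already finishes the argument, with no need for the near-diagonality of $(\tilde M)^{-1}$ or any extension. A smaller point: your conjugation with $P=(h'')^{1/2}$ conflates coercivity with respect to $|\sigma|^2$ and $|P\sigma|^2$, which are not uniformly comparable on $\overline{\dom}$; the clean identity, even within your plan, is $\sigma\cdot h''(y)A(y)\sigma=(A(y)\sigma)\cdot h''(y)M(y)(A(y)\sigma)$ for $\sigma\in\Xi_0$, which is precisely the paper's device, applied there to $h_\epsilon''$.
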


\begin{proof}
By the argument for $(ii)$ of Proposition \ref{glued_entropy}, we may find $\epsilon>0$ such that, for any $y \in \overline{\dom}\cap \Xi_1$ and $\rho \in \Xi_0$, the relation
\begin{align}
\label{intermediate_glued_ent_1}
\rho \cdot h^{\prime\prime}_{\epsilon}(y) M(y) \rho \geq \lambda |\rho|^2 
\end{align}
holds for some $\lambda>0$. Then, let $\xi = A(y) \rho$ and notice that $\xi \in \Xi_0$.  Applying \eqref{intermediate_glued_ent_1} we obtain
\begin{align}
\label{oct_28_1}
 \rho \cdot  h_{\epsilon}^{\prime \prime}(y) A(y) \rho = A(y) \rho \cdot h_{\epsilon}^{\prime \prime}(y) \rho  = \xi \cdot h^{\prime \prime}_{\epsilon}(y) M (y) \xi \geq \lambda |\xi|^2,
\end{align} 
where we have additionally used the definition of $A(y)$ and that $h^{\prime\prime}_{\epsilon}(y)$ is symmetric. To finish our argument for \eqref{prop_3_result}, we write
\begin{align}
\label{oct_28_2}
|\rho|^2 = |M(y) \xi|^2 \leq \|M(y) \|^2_2 |\xi|^2 \lesssim |\xi|^2,
\end{align}
where we have used \hyperlink{H0}{(\textbf{H0})}. Combining \eqref{oct_28_1} and \eqref{oct_28_2} yields \eqref{prop_3_result}.
\end{proof}

\smallskip

\paragraph{Argument for Theorem \ref{theorem_MS}} We now consider the Maxwell-Stefan model as presented in Example \ref{MS_model}: Notice that for any $y \in \dom $, using the symmetry of the interspecies diffusion coefficients, it can easily be checked that $\rm{Im}(M_{\rm{MS}}(\textit{y})) \subseteq \Xi_0$. It is shown in \cite{JS_2012} that $\tilde{M}_{\rm{MS}}(y) = M_{\rm{MS}}(y)|_{\Xi_0}$ is invertible, by which \eqref{Maxwell_Stefan} with \eqref{flux_gradient_condition} and \eqref{volume_filling_constraint} becomes
\begin{align}
\label{rewritten_MS}
\partial_t u - \nabla \cdot (\tilde{M}_{\rm{MS}}(u))^{-1} \nabla u = f(u), \quad \text{in} \quad \Omega\times (0,T),
\end{align}
as in \eqref{rewritten_implicit}. Following the calculations in \cite[Section 2]{JS_2012}, for $u^{\prime} = (u_1, \ldots, u_{n-1})^T$,  defining $M_{0}(u^{\prime})$ via 
\begin{align}
(M_0(u^{\prime}))_{ij} = 
\begin{cases}
\sum_{k=1, k \neq i}^{n-1}\left( \frac{1}{D_{ik}} - \frac{1}{D_{in}} \right) u^{\prime}_k + \frac{1}{D_{i n}} & \text{if } i =j, \, i,j = 1, \ldots, n-1,\\
- \left( \frac{1}{D_{ij}} - \frac{1}{D_{in}} \right)u^{\prime}_i & \text{if } i \neq j, \, i,j = 1, \ldots, n-1,
\end{cases}
\end{align}
the partial inverse $(\tilde{M}_{\rm{MS}}(y))^{-1}$ is obtained as 
\begin{align}
\label{inverse}
(\tilde{M}_{\rm{MS}}(y))^{-1} = X
\left[\begin{array}{cc}
(M_0(y^{\prime}))^{-1} & 0\\
0 & 0 
\end{array}\right]
X^{-1}, \quad \text{where } X = \rm{Id}_n - \left[ \begin{array}{r} 0\\ \vdots \\0\\1   \end{array} \right] \otimes \left[ \begin{array}{r} 1\\ \vdots \\1\\0   \end{array} \right].
\end{align}
The weak solutions of the Maxwell-Stefan system that are obtained in \cite{JS_2012} are given by $u = X (u^{\prime}, 1)^T$, where $u^{\prime}$ solves 
\begin{align}
\label{reduced_MS}
\partial_t u^{\prime} - \nabla \cdot (M_0(u^{\prime}))^{-1} \nabla u^{\prime} = f^{\prime}(u^{\prime})
\end{align}
for $(f^{\prime}(u^{\prime}),0)^T = X^{-1} f(u^{\prime})$ --by construction the components of $u^{\prime}$ are nonnegative and $\sum_{i=1}^{n-1} u_i \leq 1$. Notice that we have slightly abused notation by writing $``f(u^{\prime})"$, by which we mean $f((u^{\prime},1 - \sum_{i=1}^{n-1}u^{\prime}_{i})^T)$.\\

We now give the proof of Theorem \ref{theorem_MS}:

\begin{proof}[Proof of Theorem \ref{theorem_MS}]
We first remark that it follows from the proofs that Theorems \ref{Theorem_1} and \ref{higher_reg} may be applied when \hyperlink{H1}{(\textbf{H1})}, \hyperlink{H4}{(\textbf{H4})},  \hyperlink{H5}{(\textbf{H5})}, and \eqref{volume_perserving} hold, and there exists a glued entropy (see Definition \ref{glued_def}, the ``explicit'' volume-filling case). For the application of Theorem \ref{higher_reg} we must, additionally, have that $A \in C^{0,\sigma}_{\rm{loc}}(\dom)$. We, therefore, now check that \hyperlink{H0}{(\textbf{H0})}, \hyperlink{H1}{(\textbf{H1})}, \hyperlink{H2pp}{(\textbf{H2$^{\prime\prime}$})}, \hyperlink{H3pp}{(\textbf{H3$^{\prime\prime}$})}, \hyperlink{H5}{(\textbf{H5})}, and \eqref{volume_perserving} hold, and that $A_{\rm{MS}} = (\tilde{M}_{\rm{MS}})^{-1} \in C^{0,\sigma}(\overline{\dom})$ for any $\sigma \in (0,1)$. 

The condition \hyperlink{H0}{(\textbf{H0})} has been discussed above and \hyperlink{H5}{(\textbf{H5})} holds by assumption.  \hyperlink{H1}{(\textbf{H1})} follows immediately from \eqref{B_entropy} and \hyperlink{H2pp}{(\textbf{H2$^{\prime \prime}$})} is \eqref{hypocoercive_MS}. We will see that \hyperlink{H3pp}{(\textbf{H3$^{\prime\prime}$})} holds with 
\begin{align*}
m_i(y) = \frac{D_i + \sum^n_{j=1, j \neq i}( \prod_{k =1, k \neq j, k \neq i}^n D_{ik}  - D_i ) y_j }{\prod^n_{j =1, j \neq i}D_{ij}},
\end{align*}
where we set
\begin{align*}
D_i := \min_{j \in \{1, \ldots, n\}, j \neq i} \prod_{k =1, k \neq j, k \neq i }^n D_{ik}.
\end{align*} 
The condition corresponding to \eqref{mua} is clearly satisfied thanks to the positivity of $\dom$ and the definition of the $D_i$. To check \eqref{hp.M} we notice that for $y \in \dom \cap \Xi_1$, we have that 
\begin{align*}
M_{\text{MS},ii} (y) = \frac{D_i (1-y_i) + \sum^n_{j =1, j \neq i}( \prod^n_{k=1, k \neq j, k \neq i} D_{ik}  - D_i ) y_j}{\prod^n_{j =1, j \neq i}D_{ij}},
\end{align*}
whereby it is easy to see that \eqref{hp.M} holds. The volume-filling condition \eqref{volume_perserving} has been observed already following Example \ref{MS_model}.

We must still check that $A_{\rm{MS}} = (\tilde{M}_{\rm{MS}})^{-1} \in C^{0,\sigma}(\overline{\dom})$. For this, we recall \eqref{inverse} and use that in \cite[Lemma 5]{JS_2012} it is shown that the spectrum $\sigma(M_0) \subset [ \delta, \Delta )$, where $\delta, \Delta >0$ are finite constants. It is then shown via Cramer's rule that $M_0^{-1}$ is uniformly bounded on $\overline{\dom}$. In a similar way it can also be shown that $(M_0)^{-1}_{ij} \in C^{0,\sigma}(\overline{\dom})$ for $i, j = 1, \ldots, n-1$. The desired regularity of $A_{\rm{MS}}$ then follows immediately from \eqref{inverse}. 

\end{proof}

\subsection{``Explicit'' volume-filling systems: Proof of Theorem \ref{Theorem_examples_2}}
We apply Theorems \ref{Theorem_1} and \ref{higher_reg} as:
\begin{proof}
For the application of Theorems \ref{Theorem_1} and \ref{higher_reg}, we check that the conditions \hyperlink{H1}{(\textbf{H1})}, \hyperlink{H2p}{(\textbf{H2$^{\prime}$})}, \hyperlink{H3p}{(\textbf{H3$^{\prime}$})}, \hyperlink{H5}{(\textbf{H5})}, and \eqref{volume_perserving} hold, and that $A_{\rm{HB}} \in C^{0,\sigma}_{\rm{loc}}(\overline{\dom})$ for any $\sigma \in (0,1)$. The condition \hyperlink{H1}{(\textbf{H1})} holds thanks to \eqref{B_entropy} and \hyperlink{H2p}{(\textbf{H2$^{\prime}$})} can been seen to hold via the discussion following Example \ref{MS_model}, since $A_{\text{HB}}$ and $M_{\text{MS}}$ are the same (up to renaming constants).  \hyperlink{H3p}{(\textbf{H3$^{\prime}$})} has been shown in the proof of Theorem \ref{theorem_MS}. The condition \hyperlink{H5}{(\textbf{H5})} holds by assumption and that $A_{\rm{HB}} \in C^{0,\sigma}_{\rm{loc}}(\overline{\dom})$ holds is easily seen via \eqref{A_HB}. The volume-filling condition has been verified following Example \ref{HB_model}.

\end{proof}

\subsection{Non volume-filling systems: Proofs of Theorems \ref{Theorem_examples_3}, \ref{Theorem_examples_4}, and \ref{Theorem_examples_5}}

In each of these proofs we must check that we can apply Theorems \ref{Theorem_1} and \ref{higher_reg}. Here are the arguments:

\begin{proof}[Proof of Theorem \ref{Theorem_examples_3}]
For the application of Theorems \ref{Theorem_1} and \ref{higher_reg}, we check that the conditions \hyperlink{H1}{(\textbf{H1})}, \hyperlink{H2}{(\textbf{H2})}, \hyperlink{H3}{(\textbf{H3})}, and \hyperlink{H5}{(\textbf{H5})} hold, and that $A_{\rm{SKT}} \in C_{\rm{loc}}^{0,\sigma}(\overline{\dom})$ for any $\sigma \in (0,1)$. The condition \hyperlink{H1}{(\textbf{H1})} follows immediately from \eqref{entropy_SKT}. The condition \hyperlink{H2}{(\textbf{H2})} is \eqref{entropy_condition_SKT}. For $\hyperlink{H3}{(\textbf{H3})}$ we take 
\begin{align}
a_1(y)= \alpha_{10} + \alpha_{12} u_2 \quad \text{and} \quad a_2(y)= \alpha_{20} + \alpha_{21} u_1,
\end{align}
where \eqref{mua} is satisfied by our assumption that $\alpha_{i0}>0$ for $i = 1,2$ and since the weak solutions are nonnegative. That $A_{\rm{SKT}} \in C_{\rm{loc}}^{0,\sigma}(\overline{\dom})$ is easily seen via \eqref{A_SKT} and \hyperlink{H5}{(\textbf{H5})}  holds by assumption.
 \end{proof} 

\begin{proof}[Proof of Theorem \ref{Theorem_examples_4}]
For the application of Theorems \ref{Theorem_1} and \ref{higher_reg}, we check that the conditions \hyperlink{H1}{(\textbf{H1})}, \hyperlink{H2}{(\textbf{H2})}, \hyperlink{H3}{(\textbf{H3})}, and \hyperlink{H5}{(\textbf{H5})} hold, and that $A_{\rm{SC}} \in C^{0,\sigma}(\overline{\dom})$ for any $\sigma \in (0,1)$. The condition \hyperlink{H1}{(\textbf{H1})} holds, since the relevant entropy density is given by \eqref{B_entropy}. \hyperlink{H2}{(\textbf{H2})} follows from \eqref{entropy_structure_SC}. For \hyperlink{H3}{(\textbf{H3})} we take 
\begin{align}
a_1(y)= \frac{\mu_1}{1+ \mu_2 u_1 + \mu_1 u_2}  \quad \text{and} \quad a_2(y)= \frac{\mu_2}{1+ \mu_2 u_1 + \mu_1 u_2},
\end{align}
where \eqref{mua} is satisfied since $\mu_2, \mu_2 >0$ and the components of $u$ are nonnegative. That $A_{\rm{SC}} \in C^{0,\sigma}(\overline{\dom})$ can be verified via \eqref{A_semi} and \hyperlink{H5}{(\textbf{H5})} holds by assumption.
\end{proof}

\begin{proof}[Proof of Theorem \ref{Theorem_examples_5}] For the application of Theorems \ref{Theorem_1} and \ref{higher_reg}, we check that the conditions \hyperlink{H1}{(\textbf{H1})}, \hyperlink{H2}{(\textbf{H2})}, \hyperlink{H3}{(\textbf{H3})}, and \hyperlink{H5}{(\textbf{H5})} hold, and that $A_{\rm{HJ}} \in C_{\rm{loc}}^{0,\sigma}(\overline{\dom})$ for any $\sigma \in (0,1)$. The condition \hyperlink{H1}{(\textbf{H1})} follows from \eqref{entropy_HJ} and \hyperlink{H2}{(\textbf{H2})} from \eqref{entropy_cond_HJ}. For \hyperlink{H3}{(\textbf{H3})} we notice that $2 \in \Ss_2$, where we use the notation \eqref{case_1_S}. Since $1 \in \Ss_1$, we let 
\begin{align}
a_1(y)=  u_1 +1.
\end{align}
The condition \hyperlink{H5}{(\textbf{H5})} and that  $A_{\rm{HJ}} \in C_{\rm{loc}}^{0,\sigma}(\overline{\dom})$ can easily be seen via \eqref{A_HJ}.
\end{proof}

\section{Estimate of Poincar\'{e}-Wirtinger type for solutions of (\ref{cross_diffusion_system})}
\label{poincare}

Throughout our arguments, we will make repeated use of the following Poincar\'{e}-Wirtinger type inequality that is satisfied by solutions of \eqref{cross_diffusion_system}, and also of \eqref{MS_system_frozen}. An argument of the type we use below can be found in \cite[Lemmas 3 and 4]{S_81}.

\begin{lemma} \label{Poincare} Let $u$ be a weak solution of \eqref{cross_diffusion_system} such that $A(u) \lesssim 1$. Fix $z_0 \in\Lambda$ and $R>0$ such that $\Cc_{2R}(z_0) \subset \Lambda$. Then, we find that the relation
\begin{align}
\label{poincare_new_2}
\int_{\Cc_R(z_0)} |u - (u)_{z_0, R}|^2 \, \dd z  \lesssim R^2 \int_{\Cc_{2R}(z_0)} |\nabla u |^2 \dd z +R^{d+6} \|f(u)\|_{L^{\infty}(\Cc_{2R}(z_0))}^2
\end{align}
 holds.\\
\indent The estimate \eqref{poincare_new_2} also holds for weak solutions of the frozen system \eqref{MS_system_frozen}, as long as $C_{2R}(z_0)$ is contained in the domain where the system is defined.
\end{lemma}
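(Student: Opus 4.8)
The plan is to split the oscillation of $u$ on $\Cc_R(z_0)$ into a spatial and a temporal part and to estimate the two separately, in the spirit of \cite[Lemmas 3 and 4]{S_81}. Since $(u)_{z_0,R}$ minimizes $c\mapsto\int_{\Cc_R(z_0)}|u-c|^2\,\dd z$ over $c\in\R^n$, I may replace it, at the cost of a harmless constant factor, by the constant $c:=\fint_{\Gamma_R(t_0)}(\tilde u)_{x_0,R}(s)\,\dd s$ and bound
\begin{align*}
\int_{\Cc_R(z_0)}|u-(u)_{z_0,R}|^2\,\dd z \lesssim \int_{\Cc_R(z_0)}|u-(\tilde u)_{x_0,R}(t)|^2\,\dd z + \int_{\Cc_R(z_0)}|(\tilde u)_{x_0,R}(t)-c|^2\,\dd z.
\end{align*}

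For the first (spatial) term I would, for each fixed $t\in\Gamma_R(t_0)$, use $\chi_{x_0,R}\equiv1$ on $B_R(x_0)$ to pass to $\int_{B_{2R}(x_0)}|u-(\tilde u)_{x_0,R}(t)|^2\chi_{x_0,R}^2\,\dd x$, then use that $(\tilde u)_{x_0,R}(t)$ is the $\chi_{x_0,R}^2$-weighted $L^2$-minimizer to compare with the unweighted mean over $B_{2R}(x_0)$, and finally invoke the ordinary Poincar\'e--Wirtinger inequality on $B_{2R}(x_0)$; integrating in $t$ this yields $\lesssim R^2\int_{\Cc_{2R}(z_0)}|\nabla u|^2\,\dd z$.

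For the second (temporal) term the point is that $t\mapsto(\tilde u)_{x_0,R}(t)$ belongs to $W^{1,2}(\Gamma_R(t_0))$ with $\frac{\dd}{\dd t}(\tilde u)_{x_0,R}=\big(\int_{B_{2R}(x_0)}\chi_{x_0,R}^2\big)^{-1}\langle\partial_t u,\chi_{x_0,R}^2\rangle$; testing the weak formulation \eqref{weak_formulation} with $\phi=\chi_{x_0,R}^2 e_i$ and using $A(u)\lesssim1$, $|\nabla\chi_{x_0,R}|\lesssim R^{-1}$ and $\int_{B_{2R}(x_0)}\chi_{x_0,R}^2\sim R^d$ gives the pointwise bound $\big|\frac{\dd}{\dd t}(\tilde u)_{x_0,R}(t)\big|\lesssim R^{-d-1}\int_{B_{2R}(x_0)}|\nabla u(\cdot,t)|\,\dd x+\|f(u)\|_{L^\infty(\Cc_{2R}(z_0))}$. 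Since $\Gamma_R(t_0)$ has length $R^2$, the one-dimensional Poincar\'e--Wirtinger inequality gives $\int_{\Gamma_R(t_0)}|(\tilde u)_{x_0,R}(t)-c|^2\,\dd t\lesssim R^4\int_{\Gamma_R(t_0)}|\frac{\dd}{\dd t}(\tilde u)_{x_0,R}|^2\,\dd t$; inserting the derivative bound, using Cauchy--Schwarz in space to turn the $L^1$-norm of $\nabla u$ into an $L^2$-norm, and multiplying by $|B_R(x_0)|\sim R^d$ produces precisely $R^2\int_{\Cc_{2R}(z_0)}|\nabla u|^2\,\dd z+R^{d+6}\|f(u)\|_{L^\infty(\Cc_{2R}(z_0))}^2$. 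For the frozen system \eqref{MS_system_frozen} the argument is identical: the constant matrix $A((u)_{z_0,R})$ still satisfies $A((u)_{z_0,R})\lesssim1$ (as $(u)_{z_0,R}\in\overline{\dom}$ by convexity) and the source term is the same $f(u)$, while the hypothesis that $\Cc_{2R}(z_0)$ lies in the cylinder on which the system is posed guarantees that $\chi_{x_0,R}^2 e_i$ is an admissible test function there.

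The main obstacle --- and the reason the cutoff-weighted mean $(\tilde u)_{x_0,R}$ rather than the plain spatial mean must be used --- is that $\partial_t u$ is only known to lie in $L^2(0,T;H^1(\Omega)')$, so one cannot directly differentiate $\fint_{B_R(x_0)}u$ in time; the weighted mean is tailored so that its time derivative is the legitimate dual pairing $\langle\partial_t u,\chi_{x_0,R}^2\rangle$. The second thing to get right is the scaling: the temporal interval $\Gamma_R(t_0)$ has length $R^2$, so the temporal Poincar\'e constant is $R^4$, and this is exactly what converts the $R^{-d-2}$ and the $R^2$ appearing in the derivative estimate into the $R^2$ and $R^{d+6}$ weights claimed in \eqref{poincare_new_2}.
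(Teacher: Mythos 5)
Your proposal is correct and follows essentially the same route as the paper's proof: replace $(u)_{z_0,R}$ by a time-averaged weighted mean, control the spatial part by a weighted variant of Poincar\'e--Wirtinger on $B_{2R}(x_0)$, and control the temporal part by exploiting the time regularity of $(\tilde u)_{x_0,R}$ obtained from testing the equation with $\chi_{x_0,R}^2$, with exactly the scaling bookkeeping that produces $R^2$ and $R^{d+6}$. The only (cosmetic) difference is that you run the temporal step through a one-dimensional Poincar\'e inequality for the $W^{1,2}$-in-time weighted average, whereas the paper bounds the pairwise oscillation $|(\tilde u)_{x_0,R}(t_1)-(\tilde u)_{x_0,R}(t_2)|$ by testing with $\chi_{x_0,R}^2\mathds{1}_{(t_1,t_2)}$ and then double-averages in time; the two are equivalent in substance.
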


\begin{proof} Let $t_1$ and $t_2 \in \Gamma_{2R}(t_0)$ such that $0< t_1 < t_2$ and $\mathds{1}_{(t_1, t_2)}$ denote the indicator function of the interval $(t_1, t_2)$. We will first show that
\begin{align}
\label{intermediate_1_2}
|(\tilde{u})_{x_0, R}(t_1) - (\tilde{u})_{x_0, R}( t_2)|^2 \lesssim R^{-d} \int_{\Cc_{2R}(z_0)} |\nabla u|^2 \dd z + R^{4}\|f(u)\|^2_{L^{\infty}(\Cc_{2R}(z_0))},
\end{align}
where we use the notation \eqref{weighted_averages}. For this, we test the system \eqref{cross_diffusion_system} with $\chi_{x_0,R}^2 \mathds{1}_{(t_1, t_2)}$ to obtain
\begin{align*}
\begin{split}
&  \int_{B_{2R}(x_0)} \chi_{x_0,R}^2 u \dd x \Big|_{t=t_2} -  \int_{B_{2R}(x_0)} \chi_{x_0, R}^2 u \dd x \Big|_{t=t_1} \\
 &  =-   \int_{t_1}^{t_2} \int_{B_{2R}(x_0)} [\textrm{Id} \,  \otimes \nabla \chi_{x_0,R}^2 ]: A(u) \nabla u  \dd x \dd t + \int_{t_1}^{t_2} \int_{B_{2R}(x_0)} \chi_{x_0,R}^2 f(u) \dd x \dd t.
 \end{split}
\end{align*}
Taking the absolute value of both sides and using the definition \eqref{weighted_averages},  we obtain 
\begin{align}
\label{P_april_1}
\begin{split}
& R^d |(\tilde{u})_{x_0, R}(t_2) - (\tilde{u})_{x_0, R}(t_1)|\\
& \lesssim
  \int_{t_1}^{t_2} \int_{B_{2R}(x_0)}|   [ \textrm{Id}  \otimes \nabla \chi_{x_0,R}^2 ] : A(u) \nabla u | \dd x \dd t + R^{d+2} \|f(u)\|_{L^{\infty}(\Cc_{2R}(z_0))}.
  \end{split}
\end{align}
We then apply H\"{o}lder's inequality and inject the properties of $\chi_R$ to write 
\begin{align*}
  \int_{t_1}^{t_2} \int_{B_{2R}(x_0)}| [  \textrm{Id}  \otimes \nabla \chi_{x_0,R}^2 ]: A(u) \nabla u | \dd x \dd t
  \lesssim 
  R^{\frac{d}{2}} \Big( \int_{t_1}^{t_2} \int_{B_{2R}(x_0)}| \nabla u |^2 \dd x \dd t \Big)^{\frac{1}{2}},
\end{align*}
which is combined with \eqref{P_april_1} to give
\begin{align}
\label{Aug_3_1}
 |(\tilde{u})_{x_0, R}(t_2) - (\tilde{u})_{x_0, R}(t_1)| \lesssim  R^{-\frac{d}{2}} \Big( \int_{t_1}^{t_2} \int_{B_{2R}(x_0)}| \nabla u |^2 \dd x \dd t \Big)^{\frac{1}{2}} + R^{2}  \|f(u)\|_{L^{\infty}(\Cc_{2R}(z_0))}.
\end{align} 
The relation \eqref{intermediate_1_2} follows.

We now show \eqref{poincare_new_2}. Using a slight variant of the standard Poincar\'{e}-Wirtinger inequality and \eqref{intermediate_1_2}, we write
\begin{align}
\label{P_june_1}
\begin{split}
& \int_{\Cc_R(z_0)} |u - (u)_{z_0,R}|^2 \dd z \\
&\leq  \int_{\Cc_{2R}(z_0)} |u - \fint_{\Gamma_{2R}(t_0)} ( \tilde{u})_{x_0,R}(t) \dd t |^2 \dd z  \\
& \leq \int_{\Cc_{2R}(z_0)}  |u - ( \tilde{u})_{x_0,R}|^2 \dd z   + \int_{\Cc_{2R}(z_0)}  | ( \tilde{u})_{x_0,R}-\fint_{\Gamma_{2R}(t_0)} ( \tilde{u})_{x_0,R}(t) \dd t   |^2 \dd z\\
& \lesssim  R^2 \int_{\Cc_{2R}(z_0)} |\nabla u |^2 \dd z +  R^{d} \int_{\Gamma_{2R}(t_0)}  \fint_{\Gamma_{2R}(t_0)}  | ( \tilde{u})_{x_0,R}(s) -  ( \tilde{u})_{x_0,R}(t)|^2 \dd t \dd s  \\
&  \lesssim R^2 \int_{\Cc_{2R}(z_0)} |\nabla u|^2 \dd z + R^{d+2} \Big( R^{-d} \int_{\Cc_{2R}(z_0)} |\nabla u|^2 \dd z + R^4 \|f(u)\|^2_{L^{\infty}(\Cc_{2R}(z_0))} \Big).
\end{split}
\end{align}

The same strategy as above can be applied to weak solutions of \eqref{MS_system_frozen}.

\end{proof}

\section{Argument for Proposition \ref{reverse_holder}: A reverse H\"{o}lder inequality for solutions of (\ref{cross_diffusion_system})} 
\label{reverse_holder_section}

We follow the outline of the proof of \cite[Theorem 2.1]{GS_82}, but within the framework of the glued entropy introduced above.  In particular, the proof of Proposition \ref{reverse_holder} relies on the following result:

\begin{prop} \label{technical} Let $Q \subset \R^d \times (0,\infty)$ be a bounded space-time domain. Let $g, h : Q \to \R$ be nonnegative functions, where $g \in L^q(Q)$ and $h \in L^r(Q)$ with $r>q>1$. Suppose that for any $z_0 \in Q$ and $R>0$ such that $\Cc_{4R}(z_0) \subset Q$ the estimate
\begin{align*}
\fint_{\Cc_R(z_0)} g^q \dd z \leq b \left\{  \Big( \fint_{\Cc_{4R}(z_0)} g \dd z \Big)^{q}  + \fint_{\Cc_{4R}(z_0)} h^q \dd z \right\}+ \gamma \fint_{\Cc_{4R}(z_0)} g^q \dd z
\end{align*}
holds for $\gamma >0$. Then, there exists a constant $\gamma_0 = \gamma_0(q,r,d)$ such that if $\gamma < \gamma_0$, then there exists $\delta>0$ such that $g \in L^p_{\rm{loc}}(Q)$ for $p \in [ q, q+\delta)$ and
\begin{align}
\Big( \fint_{\Cc_R(z_0)} g^p \dd z \Big)^{\frac{1}{p}} \leq c \Big\{  \Big( \fint_{\Cc_{4R}(z_0)} g^q \dd z \Big)^{\frac{1}{q}}  + \Big( \fint_{\Cc_{4R}(z_0)} h^p \dd z \Big)^{\frac{1}{p}}  \Big\}
\end{align}
for any $z_0 \in Q$ and $R>0$ such that $\Cc_{4R}(z_0) \subset Q$. The constant $c$ and $\delta>0$ depend on $b$, $q$, $r$, $\gamma$, and $d$ only.
\end{prop}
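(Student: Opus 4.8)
The plan is to prove Proposition~\ref{technical} as a parabolic instance of the classical Gehring--Giaquinta--Modica self-improving integrability lemma, running the standard three-step scheme adapted to the parabolic cylinders $\Cc_R(z_0)$ and the parabolic metric $\delta$ of \eqref{p_metric}: (i) a Calder\'on--Zygmund stopping-time decomposition of the super-level sets of $g$; (ii) combining this decomposition with the hypothesis to obtain a distributional (``good-$\lambda$'') inequality; and (iii) multiplying that inequality by a power of the level and integrating, which upgrades the exponent from $q$ to $q+\delta$ as soon as $\gamma$ is small.

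Concretely, I would first fix $z_* \in Q$ and $R_0>0$ with $\Cc_{4R_0}(z_*)\subset Q$ and reduce to proving the estimate on $\Cc_{R_0}(z_*)$, the local statement then following by a covering of $Q$. After a normalization, set $t_0 := \big( \fint_{\Cc_{4R_0}(z_*)} g^q \dd z + \fint_{\Cc_{4R_0}(z_*)} h^q \dd z \big)^{1/q}$. For $t\geq c_0 t_0$ with $c_0=c_0(d)$ large, a Vitali/stopping-time argument decomposes $\{z\in\Cc_{R_0/2}(z_*):g(z)>t\}$, up to a null set, into a countable pairwise disjoint family $\{\Cc_{r_k}(z_k)\}_k$ with $\Cc_{4r_k}(z_k)\subset Q$ and $t^q<\fint_{\Cc_{r_k}(z_k)}g^q\dd z\leq C(d)\,t^q$, with $g\leq t$ a.e.\ off $\bigcup_k\Cc_{r_k}(z_k)$; this relies on the parabolic Vitali covering and doubling properties of $\{\Cc_R\}$. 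On each $\Cc_{r_k}(z_k)$ I would apply the hypothesis and split $g$ and $h$ into parts below and above a height of order $t$; the low part of $g$ contributes at most a fixed multiple of $t^q|\Cc_{r_k}(z_k)|$, absorbed by the lower bound $t^q<\fint_{\Cc_{r_k}(z_k)}g^q$ once the splitting height is chosen large enough. Summing over $k$ and using disjointness should then give, for all $t\geq c_0t_0$,
\begin{align*}
\int_{\{g>t\}\cap\Cc_{R_0/2}(z_*)}\! g^q\dd z \;\leq\; C\,t^{q-1}\!\!\int_{\{g>C_1 t\}}\! g\dd z \;+\; C\!\!\int_{\{h>C_1 t\}}\! h^q\dd z \;+\; C\gamma\!\!\int_{\{g>C_1 t\}}\! g^q\dd z,
\end{align*}
with $C,C_1>0$ depending only on $b,q,d$.

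The final step is to integrate this in $t$. To keep everything finite I would first replace $g$ by the truncation $g_M=\min\{g,M\}$ (the inequality persists with $g_M$ on the left and $g$ kept on the right, since $g_M\leq g$), multiply by $\delta\,t^{\delta-1}$, and integrate over $t\in(c_0t_0,\infty)$; Fubini turns the three right-hand terms into $\tfrac{C\delta}{q+\delta-1}\int g^{q+\delta}$, $C\int h^{q+\delta}$, and $C\gamma\int g^{q+\delta}$ over $\Cc_{R_0}(z_*)$ (up to controlled multiples of $t_0^{q+\delta}|\Cc_{R_0}(z_*)|$), while the left side dominates a multiple of $\int g_M^{q+\delta}$. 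Since $q>1$, the factor $\tfrac{\delta}{q+\delta-1}$ is small for small $\delta$, so if $\gamma<\gamma_0(q,r,d)$ and $\delta$ is small the coefficient of the $g^{q+\delta}$-term on the right is strictly below $1$; running this on a nested family $\Cc_\rho(z_*)$, $R_0\leq\rho\leq 2R_0$, and invoking the standard ``hole-filling'' iteration lemma lets me absorb that term, after which $M\to\infty$ by monotone convergence yields $g\in L^{q+\delta}_{\rm{loc}}(Q)$; imposing $\delta<r-q$ keeps the $h$-term finite, and undoing the normalization produces the asserted reverse H\"older estimate.

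I expect the hard part to be the parabolic Calder\'on--Zygmund decomposition and the attendant bookkeeping — tracking which enlarged cylinders appear, at which height to split $g$ and $h$, and keeping every constant independent of the truncation parameter $M$ — so that the displayed distributional inequality comes out in exactly the form needed for step~(iii); the absorption itself is routine once that inequality is in hand. As a fallback, a fully rigorous treatment can simply cite an off-the-shelf parabolic Gehring lemma, in the spirit of \cite{GS_82}.
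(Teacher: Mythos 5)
Your proposal is correct and follows essentially the same route as the paper: the paper does not write out a proof but cites the elliptic case in \cite[Proposition 5.1]{GM_79} and remarks that the Calder\'on--Zygmund cube decomposition argument carries over upon replacing Euclidean cubes by parabolic ones, which is precisely the Gehring--Giaquinta--Modica scheme (stopping-time decomposition, level-set inequality, integration in the level with absorption for small $\gamma$ and $\delta<r-q$) that you sketch. Your fallback of citing an off-the-shelf parabolic Gehring lemma is in fact what the authors themselves do.
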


\noindent The proof of this result can be found for elliptic systems in \cite[Proposition 5.1]{GM_79}. The argument goes via a Calder\'{o}n-Zygmund cube decomposition and can be adapted to the parabolic setting by replacing Euclidean cubes by parabolic cubes. 

As we will see below, to be able to apply Proposition \ref{technical} we require two additional ingredients: First, a Caccioppoli-type estimate like that in Lemma \ref{linear_cacc}, but for solutions of \eqref{cross_diffusion_system}; and second, another estimate of Poincar\'{e}-Wirtinger type satisfied by solutions of \eqref{cross_diffusion_system}, different from that in Lemma \ref{Poincare}. We start with the Caccioppoli-type estimate:
  
\begin{lemma}
\label{nonlinear_cacc}  We adopt the assumptions of Theorem \ref{Theorem_1}. Fix $z_0 \in \Lambda$ and  $R>0$ such that $\Cc_{2R}(z_0) \subset \Lambda$. We show that 
\begin{align}
\label{nonlinear_cacc_eq}
 \int_{\Cc_R(z_0)}  |\nabla u|^2 \, \textrm{d}z
 \lesssim   \frac{1}{R^2}  \int_{\Cc_{2R}(z_0)} |u - (\tilde{u})_{x_0,  R}|^2 \, \textrm{d}z + R^{d+4} \|f(u)\|^2_{L^{\infty}(\Cc_{2R}(z_0))}.
\end{align}
\end{lemma}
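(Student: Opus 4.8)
The plan is to imitate the classical Caccioppoli estimate for parabolic systems (e.g. \cite[Lemma 2.1]{GS_82}), but with the $L^2$-distance replaced by the relative entropy density $h_\epsilon(u\,|\,(\tilde u)_{x_0,R})$ and the energy estimate replaced by the entropy-dissipation inequality \eqref{entropy_estimate}; this is exactly the heuristic computation \eqref{intermediate_cacc} made rigorous. First I would fix a cut-off $\eta\in C^\infty(\Cc_{2R}(z_0))$ with $\eta\equiv 1$ on $\Cc_R(z_0)$, $\eta\equiv 0$ near the parabolic boundary, $|\nabla\eta|\lesssim 1/R$ and $|\partial_t\eta|\lesssim 1/R^2$. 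Abbreviating $b:=(\tilde u)_{x_0,R}$ (which, note, depends on $t$), one tests the weak formulation \eqref{weak_formulation} localized to $\Cc_{2R}(z_0)$ with $\phi = \eta^2\big(h_\epsilon'(u) - h_\epsilon'(b)\big)$. The point is that $\nabla\phi$ splits into the ``good'' term $\eta^2 h_\epsilon''(u)\nabla u$, the ``cut-off'' term $2\eta\nabla\eta\,(h_\epsilon'(u)-h_\epsilon'(b))$, and a term involving $\nabla b$; the first, contracted against $A(u)\nabla u$, is bounded below by $\lambda\eta^2|\nabla u|^2$ using \hyperlink{C1}{(\textbf{C1})} (or, in the volume-filling case, \hyperlink{C1p}{(\textbf{C1}$^{\prime}$)} together with the observation — already made in the excerpt — that the columns $\rho_i=(\partial_i u_1,\dots,\partial_i u_n)$ lie in $\Xi_0$ by \eqref{volume_perserving}, and $u\in\overline{\dom}\cap\Xi_1$).

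Next I would handle the time-derivative term. Writing $\int\langle\partial_t u,\phi\rangle$ and using the chain rule, $\partial_t u\cdot(h_\epsilon'(u)-h_\epsilon'(b)) = \partial_t\big(h_\epsilon(u)-h_\epsilon(b)-\langle h_\epsilon'(b),u-b\rangle\big) + \langle h_\epsilon''(b)\partial_t b,\,u-b\rangle - \partial_t b\cdot(h_\epsilon'(b)-h_\epsilon'(b))$; more cleanly, $\partial_t u\cdot(h_\epsilon'(u)-h_\epsilon'(b)) = \partial_t h_\epsilon(u\,|\,b) + \partial_t b\cdot h_\epsilon''(b)(u-b)$, so that $\int_{\Cc_{2R}}\eta^2\partial_t u\cdot(h_\epsilon'(u)-h_\epsilon'(b)) = -\int\partial_t(\eta^2) h_\epsilon(u\,|\,b) - \int\eta^2\,\partial_t b\cdot h_\epsilon''(b)(u-b)$ (the boundary term vanishes since $\eta=0$ at $t=t_0-4R^2$). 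The first of these is controlled by $R^{-2}\int_{\Cc_{2R}} h_\epsilon(u\,|\,b)\lesssim R^{-2}\int_{\Cc_{2R}}|u-b|^2$ via \eqref{compare}. For the second, $\partial_t b = \partial_t(\tilde u)_{x_0,R}$ is estimated exactly as in the Poincaré–Wirtinger Lemma \ref{Poincare}: testing \eqref{cross_diffusion_system} with $\chi_{x_0,R}^2$ shows $|\partial_t(\tilde u)_{x_0,R}|\lesssim R^{-d}\int_{B_{2R}}|\nabla u|\,\mathrm{d}x + R^2\|f(u)\|_{L^\infty}$ after dividing by the normalization; combined with $|u-b|\lesssim 1$ (boundedness), with $|h_\epsilon''|\lesssim 1$ from \hyperlink{C2}{(\textbf{C2})}, and with Hölder/Young, this term is absorbed into $\delta\int_{\Cc_{2R}}\eta^2|\nabla u|^2 + C_\delta R^{-2}\int_{\Cc_{2R}}|u-b|^2 + C_\delta R^{d+4}\|f(u)\|_{L^\infty}^2$, for $\delta$ small.

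The remaining terms are routine: the cut-off term $2\int\eta\nabla\eta:A(u)\nabla u\,(h_\epsilon'(u)-h_\epsilon'(b))$ is handled by Young's inequality together with $|A(u)|\lesssim 1$ (from \hyperlink{H4}{(\textbf{H4})} and boundedness), $|h_\epsilon'(u)-h_\epsilon'(b)|\lesssim|u-b|$ (from $|h_\epsilon''|\lesssim 1$), giving $\delta\int\eta^2|\nabla u|^2 + C_\delta R^{-2}\int_{\Cc_{2R}}|u-b|^2$; the $\nabla b$ term vanishes since $b=(\tilde u)_{x_0,R}(t)$ is spatially constant; and the source term $\int\eta^2 f(u)\cdot(h_\epsilon'(u)-h_\epsilon'(b))$ is bounded by $\|f(u)\|_{L^\infty}\int_{\Cc_{2R}}|u-b| \lesssim R^{d+4}\|f(u)\|^2_{L^\infty} + R^{-2}\int_{\Cc_{2R}}|u-b|^2$ using $|\Cc_{2R}|\sim R^{d+2}$ and Young. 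Collecting everything, choosing $\delta$ small enough to absorb all the $\delta\int\eta^2|\nabla u|^2$ terms into the left-hand side $\lambda\int\eta^2|\nabla u|^2$, and using $\eta\equiv 1$ on $\Cc_R(z_0)$, yields \eqref{nonlinear_cacc_eq}.

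\textbf{Main obstacle.} The delicate point is the term $\int\eta^2\,\partial_t(\tilde u)_{x_0,R}\cdot h_\epsilon''((\tilde u)_{x_0,R})(u-(\tilde u)_{x_0,R})$ coming from the $t$-dependence of the weighted average: unlike the classical case where one compares to a fixed constant, here one must quantify how fast $(\tilde u)_{x_0,R}$ moves in time and feed in the Poincaré-type bound on $\partial_t(\tilde u)_{x_0,R}$ without generating a term that cannot be absorbed — this is precisely why Giaquinta and Struwe's weighted average \eqref{weighted_averages} is used, and why the exponent $R^{d+4}$ (rather than $R^{d+2}$) appears on the source term. Verifying that the entropy condition \hyperlink{C1}{(\textbf{C1})}/\hyperlink{C1p}{(\textbf{C1}$^{\prime}$)} really does apply pointwise to the gradient vectors $\rho_i$ — in the volume-filling case this needs $\sum_i\partial_i u_k$-type constraints, i.e. $\nabla u\in\Xi_0$ columnwise — is the other spot requiring care, but it follows from \eqref{volume_perserving} as indicated in the excerpt.
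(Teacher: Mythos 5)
Your proposal is essentially the paper's own proof: the paper differentiates $\int_{B_{2R}(x_0)} h_\epsilon(u\,\vert\,(\tilde u)_{x_0,R})\,\eta^2\,\mathrm{d}x$ in time and substitutes the equation, which is the same computation as your testing of \eqref{weak_formulation} with $\phi=\eta^2\big(h'_\epsilon(u)-h'_\epsilon((\tilde u)_{x_0,R})\big)$, and your treatment of the coercive term via \hyperlink{C1}{(\textbf{C1})}/\hyperlink{C1p}{(\textbf{C1}$^{\prime}$)}, of the cut-off and source terms, and of the delicate $\partial_t(\tilde u)_{x_0,R}$ term (via testing with $\chi^2_{x_0,R}$ and Young's inequality with the weight $R^{\pm 2}$, keeping $R^{-2}|u-(\tilde u)_{x_0,R}|^2$ rather than using $|u-(\tilde u)_{x_0,R}|\lesssim 1$) all match the paper. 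The only imprecision is your remark that the temporal boundary term vanishes: the term at $t=t_0$ does not vanish, but it equals the nonnegative quantity $\int_{B_{2R}(x_0)}h_\epsilon(u\,\vert\,(\tilde u)_{x_0,R})\eta^2\,\mathrm{d}x\big|_{t=t_0}$ and enters with the favorable sign, so it may be dropped --- which is precisely how the paper uses it in \eqref{time_deriv}.
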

\noindent As described in Section \ref{motivation}, the argument for Lemma \ref{nonlinear_cacc_eq} is essentially an entropy estimate for $ \int_{\Omega} h_\epsilon(u \, \vert  \, (\tilde{u})_{x_0, R}) \dd x$, but infused with ingredients usually used to prove the classical Caccioppoli inequality. Before moving on, we remark that it follows from Lemma \ref{nonlinear_cacc} that
\begin{align}
\label{nonlinear_cacc_eq_2}
 \int_{\Cc_R(z_0)}  |\nabla u|^2 \, \textrm{d}z
 \lesssim   \frac{1}{R^2}  \int_{\Cc_{2R}(z_0)} |u - (u)_{z_0,2R}|^2 \, \textrm{d}z+ R^{d+4} \|f(u)\|^2_{L^{\infty}(\Cc_{2R}(z_0))}.
\end{align}

Using a similar method, we can also prove the other estimate of Poincar\'{e}-Wirtinger type mentioned above:

\begin{lemma} \label{time_reg}  We adopt the assumptions of Theorem \ref{Theorem_1}. Fix $z_0 \in \Lambda$ and  $R>0$ such that $\Cc_{2R}(z_0) \subset \Lambda$. Then we find that
\begin{align}
\label{Poincare_new}
\sup_{t\in \Gamma_R(t_0)} \int_{B_R(x_0)} |u(t) - (\tilde{u})_{x_0,R}(t)|^2  \, \textrm{d} x \lesssim \int_{\Cc_{2R}(z_0)} |\nabla u |^2 \, \textrm{d} z +R^{d+4}\|f(u)\|_{L^{\infty}(\Cc_{2R}(z_0))}^2.
\end{align}
\end{lemma}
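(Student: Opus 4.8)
The plan is to imitate the proof of Lemma~\ref{Poincare}, but to measure the deviation of $u(\cdot,t)$ from its weighted spatial mean by the \emph{glued relative entropy} $h_\epsilon(\,\cdot\,\vert\,\cdot\,)$ from \eqref{relative_entropy_ep} rather than by the squared $L^2$-distance; the whole point is that, unlike the (sign-indefinite) energy dissipation $\nabla u:A(u)\nabla u$, the entropy dissipation $\nabla u:h_\epsilon''(u)A(u)\nabla u$ is nonnegative by \hyperlink{C1}{(\textbf{C1})}/\hyperlink{C1p}{(\textbf{C1$^{\prime}$})}. Throughout write $\chi:=\chi_{x_0,R}$ and $a(\tau):=(\tilde u)_{x_0,R}(\tau)$, recall that $\chi^2\ge\mathds{1}_{B_R(x_0)}$ and $R^d\lesssim\int_{B_{2R}(x_0)}\chi^2\,\dd x\lesssim R^d$, and note that $a(\tau)\in\overline{\mathcal D}$ for every $\tau$ since it is a convex combination of values of $u(\cdot,\tau)\in\overline{\mathcal D}$. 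The first preparatory step is to fix, using Chebyshev's inequality, a time $s\in(t_0-4R^2,\,t_0-R^2)$ with $u(\cdot,s)\in H^1(B_{2R}(x_0))$ and
\begin{align}\label{tr:reftime}
\int_{B_{2R}(x_0)}|\nabla u(x,s)|^2\,\dd x\ \lesssim\ \frac1{R^2}\int_{\Cc_{2R}(z_0)}|\nabla u|^2\,\dd z ;
\end{align}
note $s<t$ and $[s,t]\subset\Gamma_{2R}(t_0)$ for every $t\in\Gamma_R(t_0)$.

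Next I would reduce the quantity in \eqref{Poincare_new} to a weighted glued relative entropy. Using \eqref{compare} (which encodes \hyperlink{C2}{(\textbf{C2})}) and $\chi^2\ge\mathds{1}_{B_R}$ one has $\int_{B_R}|u(t)-a(t)|^2\,\dd x\lesssim\int_{B_{2R}}h_\epsilon(u(t)\,\vert\,a(t))\chi^2\,\dd x$, and since $a(t)$ is the $\chi^2$-weighted mean of $u(\cdot,t)$, so that $\int_{B_{2R}}(u(t)-a(t))\chi^2\,\dd x=0$, the Bregman-type identity $\int_{B_{2R}}h_\epsilon(u(t)\,\vert\,c)\chi^2\,\dd x=\int_{B_{2R}}h_\epsilon(u(t)\,\vert\,a(t))\chi^2\,\dd x+h_\epsilon(a(t)\,\vert\,c)\int_{B_{2R}}\chi^2\,\dd x$ shows that $a(t)$ minimises $c\mapsto\int_{B_{2R}}h_\epsilon(u(t)\,\vert\,c)\chi^2\,\dd x$; choosing $c=a(s)$ gives $\int_{B_R}|u(t)-a(t)|^2\,\dd x\lesssim\int_{B_{2R}}h_\epsilon(u(t)\,\vert\,a(s))\chi^2\,\dd x$. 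Then I would run the entropy estimate exactly as in the proof of Lemma~\ref{nonlinear_cacc}: since $h_\epsilon\in C^2(\overline{\mathcal D})$ and $u\in C^0([0,T];L^2)\cap L^\infty$, the map $\tau\mapsto\int_{B_{2R}}h_\epsilon(u(\tau)\,\vert\,a(s))\chi^2\,\dd x$ is absolutely continuous and testing \eqref{weak_formulation} with $\phi=[h_\epsilon'(u)-h_\epsilon'(a(s))]\chi^2\mathds{1}_{(s,t)}$ (using $\nabla\phi=\chi^2 h_\epsilon''(u)\nabla u+2\chi[h_\epsilon'(u)-h_\epsilon'(a(s))]\otimes\nabla\chi$) produces
\begin{align}\label{tr:identity}
\begin{split}
\int_{B_{2R}}h_\epsilon(u(t)\,\vert\,a(s))\chi^2\,\dd x
=\ &\int_{B_{2R}}h_\epsilon(u(s)\,\vert\,a(s))\chi^2\,\dd x
-\int_s^t\!\!\int_{B_{2R}}\chi^2\,\nabla u:h_\epsilon''(u)A(u)\nabla u\,\dd x\,\dd\tau\\
&-2\int_s^t\!\!\int_{B_{2R}}\chi\,[h_\epsilon'(u)-h_\epsilon'(a(s))]\otimes\nabla\chi:A(u)\nabla u\,\dd x\,\dd\tau\\
&+\int_s^t\!\!\int_{B_{2R}}f(u)\cdot[h_\epsilon'(u)-h_\epsilon'(a(s))]\chi^2\,\dd x\,\dd\tau .
\end{split}
\end{align}

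I would then estimate the four terms on the right of \eqref{tr:identity}. The second term is $\le 0$: each column $\partial_{x_k}u$ lies in $\mathbb R^n$ --- respectively in $\Xi_0$, by differentiating the constraint \eqref{volume_perserving}, in the volume-filling case --- and $u(x,t)\in\overline{\mathcal D}$ --- respectively $\overline{\mathcal D}\cap\Xi_1$ --- so \hyperlink{C1}{(\textbf{C1})} (resp.\ \hyperlink{C1p}{(\textbf{C1$^{\prime}$})}) gives $\nabla u:h_\epsilon''(u)A(u)\nabla u\ge\lambda|\nabla u|^2\ge0$ and I simply discard it. For the first term, \eqref{compare}, the spatial Poincaré--Wirtinger inequality, the elementary bound $|a(s)-(u)_{B_{2R}}(s)|^2\lesssim R^{-d}\int_{B_{2R}}|u(s)-(u)_{B_{2R}}(s)|^2\,\dd x$ (with $(u)_{B_{2R}}(s)$ the plain ball average), and \eqref{tr:reftime} give $\int_{B_{2R}}h_\epsilon(u(s)\,\vert\,a(s))\chi^2\,\dd x\lesssim R^2\int_{B_{2R}}|\nabla u(s)|^2\,\dd x\lesssim\int_{\Cc_{2R}}|\nabla u|^2\,\dd z$. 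For the third term, $|A(u)|\lesssim1$ (boundedness of $u$ and \hyperlink{H4}{(\textbf{H4})}), $|\nabla\chi|\lesssim R^{-1}$, $|h_\epsilon'(u)-h_\epsilon'(a(s))|\lesssim|u-a(s)|$ (from \hyperlink{C2}{(\textbf{C2})}), and Young's inequality bound it by $\int_{\Cc_{2R}}|\nabla u|^2\,\dd z+R^{-2}\int_s^t\int_{B_{2R}}|u-a(s)|^2\,\dd x\,\dd\tau$; the fourth term is $\le\|f(u)\|_{L^\infty}\int_s^t\int_{B_{2R}}|u-a(s)|\,\dd x\,\dd\tau\lesssim\|f(u)\|_{L^\infty}R^{\frac{d+2}2}\big(\int_s^t\int_{B_{2R}}|u-a(s)|^2\big)^{1/2}$. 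Everything therefore reduces to controlling $\int_s^t\int_{B_{2R}}|u-a(s)|^2\,\dd x\,\dd\tau$, and here I would split $|u(\tau)-a(s)|^2\le 2|u(\tau)-a(\tau)|^2+2|a(\tau)-a(s)|^2$: the first summand is handled pointwise in $\tau$ by the spatial Poincaré--Wirtinger inequality and the mean comparison above (so $R^{-2}\int_s^t\int_{B_{2R}}|u(\tau)-a(\tau)|^2\lesssim\int_{\Cc_{2R}}|\nabla u|^2\,\dd z$), while the second is controlled by the estimate \eqref{intermediate_1_2} established inside the proof of Lemma~\ref{Poincare} (legitimate since $s,\tau\in\Gamma_{2R}(t_0)$), which with $|t-s|\lesssim R^2$ and $|B_{2R}|\lesssim R^d$ gives $R^{-2}\int_s^t\int_{B_{2R}}|a(\tau)-a(s)|^2\lesssim\int_{\Cc_{2R}}|\nabla u|^2\,\dd z+R^{d+4}\|f(u)\|_{L^\infty}^2$.

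Assembling these bounds, $R^{-2}\int_s^t\int_{B_{2R}}|u-a(s)|^2\,\dd x\,\dd\tau\lesssim\int_{\Cc_{2R}}|\nabla u|^2\,\dd z+R^{d+4}\|f(u)\|_{L^\infty}^2$; feeding this into the third term of \eqref{tr:identity}, into the fourth term after one more application of Young's inequality, and then into the reduction of the previous paragraph, one obtains $\int_{B_R(x_0)}|u(t)-a(t)|^2\,\dd x\lesssim\int_{\Cc_{2R}(z_0)}|\nabla u|^2\,\dd z+R^{d+4}\|f(u)\|_{L^\infty(\Cc_{2R}(z_0))}^2$ for every $t\in\Gamma_R(t_0)$, and taking the supremum over $t$ finishes the proof. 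The main obstacle is not conceptual --- the glued relative entropy makes the diffusion term cooperate, which is the one genuinely new feature --- but bookkeeping: one has to keep precise track of the powers of $R$ so that all the lower-order contributions involving $f$ collapse exactly to the scaling $R^{d+4}\|f(u)\|_{L^\infty}^2$, which is why the choice of reference time $s$ in the ``reservoir'' interval $(t_0-4R^2,t_0-R^2)$ and the reuse of \eqref{intermediate_1_2} are essential.
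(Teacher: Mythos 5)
Your proof is correct, and it reaches \eqref{Poincare_new} by a route whose mechanics differ from the paper's even though the spirit (an entropy estimate for the weighted glued relative entropy, with the dissipation discarded by \hyperlink{C1}{(\textbf{C1})}/\hyperlink{C1p}{(\textbf{C1$^{\prime}$})} and the comparison \eqref{compare} from \hyperlink{C2}{(\textbf{C2})}) is the same. The paper simply reruns the computation \eqref{time_deriv} of Lemma \ref{nonlinear_cacc} with the time-truncated cut-off $\eta=\chi_{x_0,R}\,\tau\,\mathds{1}_{t<t_1}$: the reference state is the \emph{moving} weighted average $(\tilde u)_{x_0,R}(t)$, the temporal cut-off kills the initial term, and the price is the term generated by $\partial_t(\tilde u)_{x_0,R}$ (the term \rom{3} of that proof), which is estimated exactly as there; the conclusion then follows from the spatial Poincar\'e--Wirtinger inequality and \eqref{compare}. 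You instead freeze the reference at $a(s)=(\tilde u)_{x_0,R}(s)$ for a Chebyshev-selected slice $s\in(t_0-4R^2,t_0-R^2)$, which removes the $\partial_t(\tilde u)_{x_0,R}$ term entirely, and you return to the moving average via the Bregman/minimality property of the $\chi^2$-weighted mean --- this step is sound, since the cross term vanishes precisely because $\int_{B_{2R}}(u(t)-a(t))\chi^2\,\dd x=0$ and $h_\epsilon(a(t)\,\vert\,a(s))\geq0$ by convexity, with $a(t),a(s)\in\overline{\dom}$ by convexity of $\dom$. The price you pay instead is the initial term at time $s$ (handled by the Chebyshev slice) and the drift $|a(\tau)-a(s)|$, for which recycling \eqref{intermediate_1_2} from the proof of Lemma \ref{Poincare} is legitimate since $s,\tau\in\Gamma_{2R}(t_0)$; your use of (\textbf{C1$^{\prime}$}) with $\partial_k u\in\Xi_0$ and $u\in\overline{\dom}\cap\Xi_1$ in the volume-filling case, and your $R$-bookkeeping leading to $R^{d+4}\|f(u)\|^2_{L^\infty}$, both check out (the chain rule for $\partial_t u\in L^2(H^{-1})$ of course requires the same standard approximation argument the paper invokes for its own computation). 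Net effect: your variant is a bit longer and needs the Bregman step plus an a.e.\ slice selection, but it is more self-contained at this point, as it does not re-import the estimate of the $\partial_t(\tilde u)_{x_0,R}$ term from Lemma \ref{nonlinear_cacc}.
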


\noindent We remark that Lemma \ref{time_reg} is an analogue of \cite[Lemma 2.2]{GS_82}. 

\subsection{Proof of Lemma \ref{nonlinear_cacc}: A Caccioppoli-type estimate for solutions of (\ref{cross_diffusion_system})}

\begin{proof}
Let $\tau \in C^{\infty}(\R)$ such that $\tau \equiv 1$ on $\Gamma_R(t_0)$,  $\tau \equiv 0$ on the set $t \leq  t_0 - (2R)^2 $, and $| \tau'| \lesssim 1/R^2$. Defining the cut-off function $\eta = \chi_{x_0,R}  \tau$, with the notation \eqref{cut_off}, and using the definition \eqref{relative_entropy_ep}, we then write
	\begin{align}
	\label{time_deriv_calc}
	\begin{split}
		& \partial_t \big( h_\epsilon(u \vert   (\tilde{u})_{x_0, R}) \eta^2 \big)\\
	&=
	\partial_t h_\epsilon(u  \vert   (\tilde{u})_{x_0, R} )\eta^2+
	2h_\epsilon(u  \vert   (\tilde{u})_{x_0, R})\eta\partial_t \eta
	\\&=\eta^2 \partial_u h_\epsilon(u  \vert   (\tilde{u})_{x_0, R}) \cdot \partial_t u+ \eta^2 \partial_{v} h_\epsilon(u  \vert   (\tilde{u})_{x_0, R} ) \cdot \partial_t (\tilde{u})_{x_0, R}
	+ 2h_\epsilon(u   \vert  (\tilde{u})_{x_0, R})\eta\partial_t \eta
	\\&=\eta^2 (h'_\epsilon(u)-h'_\epsilon(  (\tilde{u})_{x_0, R})) \cdot \partial_tu  - \eta^2 h''_\epsilon( (\tilde{u})_{x_0, R}) (u -  (\tilde{u})_{x_0, R} )  \cdot \partial_t (\tilde{u})_{x_0, R}+
	2h_\epsilon(u  \vert  (\tilde{u})_{x_0, R})\eta\partial_t \eta.
	\end{split}
	\end{align}
After integrating this identity and using the definition of $\eta$, we obtain 
	\begin{align}
	\label{time_deriv}
	\begin{split}
	0&\leq \int_{B_{2R}(x_0)}h_\epsilon(u\vert  (\tilde{u})_{x_0, R} )\eta^2 \dd x \bigg|_{t=t_0}\\
	& =
	\int_{C_{2R}(z_0)}\partial_t (h_\epsilon(u\vert  (\tilde{u})_{x_0, R} )\eta^2) \dd z
	\\& =\int_{C_{2R}(z_0)}\eta^2 (h'_\epsilon(u)-h'_\epsilon( (\tilde{u})_{x_0, R} )) \cdot \partial_tu \dd z + 2\int_{C_{2R}(z_0)}h_\epsilon(u | (\tilde{u})_{x_0, R}  )\eta\partial_t \eta \, \dd z  \\
	& \qquad - \int_{C_{2R}(z_0)} \eta^2 h''_\epsilon(  (\tilde{u})_{x_0, R} ) (u -  (\tilde{u})_{x_0, R} )  \cdot \partial_t   (\tilde{u})_{x_0, R}  \dd z
	\\& =: \rom{1}+ \rom{2} + \rom{3}.
	\end{split}
	\end{align}
	
Here we have used that $\partial_t  (\tilde{u})_{x_0, R}  \in L^{1}(\Gamma_{2R}(t_0))$ --which can be seen by testing \eqref{cross_diffusion_system} with $\chi^2_{R} \mathds{1}_{(t, t_0)}$ as in the proof of Lemma \ref{Poincare}-- and \eqref{time_deriv_calc} in the form 
\begin{align}
\begin{split}
&\Big\|  \partial_t \int_{B_{2R}(x_0)} h_\epsilon(u\vert  (\tilde{u})_{x_0, R} )\eta^2  \dd x \Big\|_{L^1(\Gamma_{2R}(t_0))}   \\
 & \lesssim_R \| u -   (\tilde{u})_{x_0, R} \|_{L^2( \Gamma_{2R}(t_0); H^1(B_{2R}(x_0)))} \| \partial_t u  \|_{L^2(\Gamma_{2R}(t_0);H^{-1}(B_{2R}(x_0)))} \\
&\qquad  + \| \partial_t (\tilde{u})_{x_0, R}\|_{L^{1} (\Gamma_{2R}(t_0))}  \| u -   (\tilde{u})_{x_0, R} \|_{L^{\infty}(\Gamma_{2R}(z_0),  L^2(B_{2R}(x_0)))}\\
& \qquad \qquad + \| u -   (\tilde{u})_{x_0, R} \|_{L^2( \Gamma_{2R}(t_0), L^2(B_{2R}(x_0)))} < \infty,
\end{split}
\end{align}
where we have used the properties of the glued entropy. We remark that the above heuristic computation can be made formal with a standard approximation argument.

Let us now treat the terms on the right-hand side of \eqref{time_deriv} separately, starting with \textit{\rom{1}}. From \eqref{cross_diffusion_system} it follows:
	\begin{align}
	  \label{final_cacc_june_2}
	\begin{split}
	I&=-
	\int_{C_{2R}(z_0)}\nabla\big((h'_\epsilon(u)-h'_\epsilon( (\tilde{u})_{x_0, R} ))\eta^2\big) : A(u)\nabla u \dd z + \int_{C_{2R}(z_0)} \eta^2(h'_\epsilon(u)-h'_\epsilon( (\tilde{u})_{x_0, R} ))\cdot f(u) \dd z 
	\\&=-
	\int_{C_{2R}(z_0)}\eta^2\nabla h'_\epsilon(u) : A(u)\nabla u \dd z - 2
	\int_{C_{2R}(z_0)} \eta [(h'_\epsilon(u)-h'_\epsilon( (\tilde{u})_{x_0, R} )) \otimes  \nabla \eta ]: A(u)\nabla u \dd z\\
	& \qquad + \int_{C_{2R}(z_0)} \eta^2 (h'_\epsilon(u)-h'_\epsilon( (\tilde{u})_{x_0, R} )) \cdot f(u) \dd z\\
	& =: \rom{1}_1 +\rom{1}_2 + \rom{1}_3,
	 \end{split}
	 \end{align}
From \hyperlink{C1}{(\textbf{C1})} or \hyperlink{C1p}{(\textbf{C1$^{\prime}$})} with $\eqref{volume_perserving}$ we then obtain 
	 \begin{align}
	  \label{final_cacc_june}
	 I_1 &= -
	\int_{C_{2R}(z_0)}\eta^2  \nabla u:  h_\epsilon''(u) A(u) \nabla u \dd z \lesssim - \int_{C_{2R}(z_0)}\eta^2|\nabla u |^2 \dd z.
	\end{align}
The term $\rom{1}_2$ is treated using \hyperlink{C2}{(\textbf{C2})} and the boundedness of $A$ from \hyperlink{H4}{(\textbf{H4})} as
	 \begin{align}
	 \label{final_cacc_1}
	 \begin{split}
	| \rom{1}_2 |& \lesssim \int_{C_{2R}(z_0)}\eta|\nabla\eta | |u- (\tilde{u})_{x_0, R} | \, |\nabla u| \dd z \\
	& \lesssim \gamma  \int_{C_{2R}(z_0)}\eta^2|\nabla u|^2 \dd z + C(\gamma) \int_{C_{2R}(z_0)}|\nabla \eta|^2|u- (\tilde{u})_{x_0, R} |^2 \dd z,
	\end{split}
	 \end{align}
for any $\gamma >0$. 
For $\rom{1}_3$ we use \hyperlink{C2}{(\textbf{C2})} and Young's inequality to write
 \begin{align}
 	 \label{final_cacc_1_new}
	 \begin{split}
|\rom{1}_3| & \lesssim  \int_{C_{2R}} \eta^2 \Big(  \frac{1}{ R^2} |h'_\epsilon(u)-h'_\epsilon( (\tilde{u})_{x_0, R} ) |^2 + R^2 |f(u)|^2  \Big) \dd z\\
& \lesssim  \frac{1}{ R^2}\int_{C_{2R}(z_0)}  |u -  (\tilde{u})_{x_0, R} |^2 \dd z + R^{d+4} \|f(u)\|_{L^{\infty}(\Cc_{2R}(z_0))}^2.  \\
\end{split}
 \end{align}

To handle $\rom{3}$ we notice that by the definition \eqref{weighted_averages}, we obtain
\begin{align}
\partial_t  (\tilde{u})_{x_0, R}  = \frac{\int_{B_{2R}(x_0)}  \chi_R^2 \partial_t u \dd x}{\int_{B_{2R}(x_0)} \chi_R^2 \dd x} = -  \frac{\int_{B_{2R}(x_0)} \left(2 \chi_{R} [  \textrm{Id} \otimes \nabla \chi_{R}] : A(u) \nabla u   -  \chi_R^2 \cdot f(u) \right)\dd x}{\int_{B_{2R}(x_0)} \chi_R^2 \dd x},
\end{align}
where we have dropped the dependence of $\chi_{x_0,R}$ on $x_0$ for brevity. Notice that in the above identity there are no boundary terms thanks to our use of the weighted average $ (\tilde{u})_{x_0, R} $. Using \hyperlink{C2}{(\textbf{C2})} and the definition of $\eta$ along with $|\nabla \chi_R| \lesssim \frac{1}{R}$, we are then able to write 
\begin{align}
 |\rom{3}| & \lesssim R^{-d}  \int_{C_{2R}(z_0)}\Big(  \eta^2 | h''_\epsilon(  (\tilde{u})_{x_0, R} ) |  \, |u -  (\tilde{u})_{x_0, R} | \label{final_cacc_2} \\
&\qquad \qquad \qquad \qquad  \times  \Big( \int_{B_{2R}(x_0)} |2 \chi_{R}  [  \textrm{Id} \otimes \nabla \chi_{R}] : A(u) \nabla u |\dd x +   \int_{B_{2R}(x_0)} \chi_R^2 |f(u)| \dd x \Big)     \Big) \dd z  \\
& \lesssim  \frac{C(\gamma)}{ R^2} \int_{C_{2R}(z_0)} |u -  (\tilde{u})_{x_0, R} |^2 \dd z\\
& \,+   \frac{ \gamma R^2 }{R^{2d}} \int_{C_{2R}(z_0)} \Big( \eta^2 \Big( \int_{B_{2R}(x_0)} |2 \chi_{R}  [ \textrm{Id}\otimes \nabla \chi_{R}]: A(u) \nabla u |\dd x \Big)^2  + R^{2d} \|f(u)\|^2_{L^{\infty}(\Cc_{2R}(z_0))}\Big) \dd z\\
& \lesssim  \frac{C(\gamma)}{R^2} \int_{C_{2R}(z_0)} |u -  (\tilde{u})_{x_0, R} |^2 \dd z\\
& \,  +  \frac{ \gamma  }{R^{2d}}  \int_{C_{2R}(z_0)} \Big( \chi_R^2   R^d \int_{B_{2R}(x_0)} \tau^2 \chi_{R}^2  |\nabla u |^2 \dd x +R^{2d+2} \|f(u)\|_{L^{\infty}(\Cc_{2R}(z_0))}^2  \Big) \dd z\\
& \lesssim  \frac{C(\gamma)}{ R^2 } \int_{C_{2R}(z_0)}   |u -  (\tilde{u})_{x_0, R} |^2 \dd z+ \gamma  \int_{C_{2R}(z_0)} \eta^2 |\nabla u|^2 \dd z  +   \gamma R^{d+4} \|f(u)\|_{L^{\infty}(\Cc_{2R}(z_0))}^2,
\end{align}
for any $\gamma>0$. 

To finish, we remark that by \hyperlink{C2}{(\textbf{C2})} in the form \eqref{compare}, $\rom{2}$ in \eqref{time_deriv} can be estimated as
	\begin{align}
	\label{final_cacc_3}
	|\rom{2}|\lesssim \int_{C_{2R}(z_0)}\eta|\partial_t\eta|| u- (\tilde{u})_{x_0, R} |^2 \dd z \lesssim \frac{1}{R^2} \int_{C_{2R}(z_0)}  | u- (\tilde{u})_{x_0, R} |^2 \dd z.
	\end{align}
	
We then combine the estimates \eqref{time_deriv}, \eqref{final_cacc_june_2}, \eqref{final_cacc_june}, \eqref{final_cacc_1}, \eqref{final_cacc_1_new}, \eqref{final_cacc_2}, and \eqref{final_cacc_3} and choose $\gamma>0$ small enough to absorb the appropriate terms. Using that $\eta \equiv 1$ on $C_{R}(z_0)$ then yields the result. 
\end{proof}

\subsection{Proof of Lemma \ref{time_reg}: Another estimate of Poincar\'{e}-Wirtinger type for solutions of (\ref{cross_diffusion_system})}  
The strategy for obtaining \eqref{Poincare_new} is similar to that used in the proof of Lemma \ref{nonlinear_cacc}.
\begin{proof}
Let $t_1 \in \Gamma_{R}(t_0)$ and set $\eta =  \chi_{x_0,R}  \tau \mathds{1}_{t < t_1}$. Then, by \eqref{time_deriv}  and the bounds contained in the proof of Lemma \ref{nonlinear_cacc}, taking the time derivative of $\int_{B_{2R}(x_0)}h_{\epsilon}(u \, \vert\,(\tilde{u})_{x_0,R}) \dd x$ yields 
\begin{align}
\begin{split}
\int_{B_{2R}(x_0)}h_\epsilon(u\, \vert\,(\tilde{u})_{x_0, R}) \chi_{R}^2 \dd x \bigg|_{t=t_1}\lesssim \frac{1 }{ R^2} \int_{C_{2R}(z_0)}  |u - (\tilde{u})_{x_0, R}|^2 \dd z + R^{d+4}\|f (u)\|^2_{L^{\infty}(\Cc_{2R}(z_0))}.
\end{split}
\end{align}
Then, by the definition of $\chi_{R}$ and using a slight modification of the classical Poincar\'{e}-Wirtinger inequality along with \eqref{compare}, we obtain 
\begin{align*}
\int_{B_{R}(x_0)} | u -  (\tilde{u})_{x_0, R}|^2 \dd x \bigg|_{t=t_1}  \lesssim \int_{C_{2R}(z_0)}  |\nabla u|^2 \dd z + R^{d+4}\|f(u)\|^2_{L^{\infty}(\Cc_{2R}(z_0))}.
\end{align*}
Taking the supremum over $t_1 \in \Gamma_{R}(t_0)$ yields that 
\begin{align}
\label{higher_integrability_1}
\sup_{t \in \Gamma_{R}(t_0)} \int_{B_{R}(x_0)} | u -  (\tilde{u})_{x_0, R}|^2 \dd x \lesssim \int_{C_{2R}(z_0)}  |\nabla u|^2 \dd z +R^{d+4}\|f(u)\|^2_{L^{\infty}(\Cc_{2R}(z_0))}.
\end{align}
\end{proof}

\subsection{Proof of Proposition \ref{reverse_holder}} Given the estimates contained in Lemmas \ref{nonlinear_cacc} and  \ref{time_reg}, the proof of the reverse H\"{o}lder inequality in Proposition \ref{reverse_holder} is now a slight modification of the argument for \cite[Theorem 2.1]{GS_82}. We give the argument for $d \geq 3$ --the argument for $d = 2$ goes in a similar way.

\begin{proof}[Proof of Proposition \ref{reverse_holder}] 
The main tool that we use is Proposition \ref{technical}. Let $z_0 =0$ and $Q = \Cc_{3/2}(0)$, which we assume for simplicity is in $\Lambda$. We will show that for any $0< R < 3/2$ and $z_0^{\prime} \in Q$ such that $\Cc_{4R}(z_0^{\prime}) \subset Q$ the estimate
\begin{align}
\label{intermediate_0}
\fint_{\Cc_{R}(z^{\prime}_0)} |\nabla u|^2  \dd z  \lesssim C(\gamma)\Big\{ \Big(  \fint_{\Cc_{4R}(z^{\prime}_0)} |\nabla u |^{2_*} \dd z \Big)^{\frac{2}{2_{*}}} +   \|f(u)\|^2_{L^{\infty}(\Cc_{3/2}(0))}  \Big\}+ \gamma \fint_{\Cc_{4R}(z^{\prime}_0)} |\nabla u|^2 \dd z ,
\end{align}
holds for any $\gamma >0 $ with $2_* = 2d/(d+2)$. 

Applying Proposition \ref{technical} with $g = |\nabla u |^{2_*}$, $h =   \|f(u)\|_{L^{\infty}(\Cc_{3/2}(0))}^{2_*}$, and $q = 2/ 2_*$ yields 
\begin{align}
\label{intermediate_1}
\left( \fint_{\Cc_{1/4}(0)} |\nabla u|^{2_* p} \dd z \right)^{\frac{1}{p}} \lesssim \left( \fint_{\Cc_{1}(0)} |\nabla u|^2 \dd z \right)^{\frac{2_*}{2}} +   \|f(u)\|^{2_*}_{L^{\infty}(\Cc_{3/2}(0))}
\end{align}
 for $ p \in [ 2/2_* , 2/2_* + \delta)$ with $\delta  >0$. This gives the reverse H\"{o}lder inequality \eqref{reverse_holder_relation} for $z_0 = 0 $ and $R = 1/4$. We obtain \eqref{reverse_holder_relation} for any $z_0 \in \Lambda$ and $R >0$ by applying \eqref{intermediate_1} to  the rescaled and translated $\tilde{u}(x,t) = u((4R)^2(t - t_0), 4R (x- x_0))$, which solves \eqref{cross_diffusion_system} with the reaction terms $\tilde{f}_i (x,t)= (4R)^2 f_i(t-t_0, x-x_0)$.

It remains to show \eqref{intermediate_0}. We begin with applications of H\"{o}lder's inequality in both space and time and an application of Lemma \ref{time_reg}:
\begin{align}
\label{May_13_1}
\begin{split}
& \int_{\Cc_{2R}(z^{\prime}_0)} |u - (\tilde{u})_{x_0^{\prime}, 2R}|^2 \dd z  \\
 & \leq  \sup_{z \in \Gamma_{2R}(t^{\prime}_0) } \Big( \int_{B_{2R}(x^{\prime}_0)} |u - (\tilde{u})_{x_0^{\prime}, 2R}|^2 \dd x  \Big)^{\frac{1}{2}} \int_{\Gamma_{2R}(t^{\prime}_0)}    \Big( \int_{B_{2R}(x^{\prime}_0)} |u - (\tilde{u})_{x_0^{\prime},2R}|^2 \dd x \Big)^{\frac{1}{2}}  \dd t \\
& \lesssim  \Big( \Big(\int_{\Cc_{4R}(z^{\prime}_0)} |\nabla u|^2 \dd z \Big)^{\frac{1}{2}} + R^{\frac{d+4}{2}}   \| f(u)\|_{L^{\infty}(\Cc_{4R}(z^{\prime}_0))} \Big)\\
& \qquad \times  \int_{\Gamma_{2R}(t^{\prime}_0)}   \Big( \int_{B_{2R}(x^{\prime}_0)} |u - (\tilde{u})_{x_0^{\prime},2R}|^{2} \dd x \Big)^{ \frac{1}{4} }  \Big(  \int_{B_{2R}(x^{\prime}_0)} |u - (\tilde{u})_{x_0^{\prime},2R}|^{2_*} \dd x \Big)^{\frac{1}{2}\frac{1}{ 2_*}} \dd t,
\end{split}
\end{align}
where $2^* = 2d/ (d-2)$. Using slight variants of the classical Poincar\'{e}-Wirtinger and the Poincar\'{e}-Sobolev inequalities, we further bound the right-hand side of the last string of inequalities by:
\begin{align}
\label{May_13_2}
\begin{split}
& c R^{\frac{1}{2}}\Big( \Big(\int_{\Cc_{4R}(z^{\prime}_0)} |\nabla u|^2 \dd z \Big)^{\frac{1}{2}} + R^{\frac{d+4}{2}} \| f(u)\|_{L^{\infty}(\Cc_{4R}(z^{\prime}_0))} \Big)\\
&\qquad \qquad \times   \int_{\Gamma_{2R}(t_0^{\prime})}   \Big( \int_{B_{2R}(x^{\prime}_0)} |\nabla u |^{2} \dd x \Big)^{ \frac{1}{4}}  \Big(  \int_{B_{2R}(x^{\prime}_0)} |\nabla u |^{2_*} \dd x \Big)^{\frac{1}{2 } \frac{1}{2_*}} \dd t\\
& \lesssim R^{\frac{1}{2}}\Big( \Big(\int_{\Cc_{4R}(z^{\prime}_0)} |\nabla u|^2 \dd z \Big)^{\frac{1}{2}} + R^{\frac{d+4}{2}} \| f(u)\|_{L^{\infty}(\Cc_{4R}(z^{\prime}_0))} \Big) \\
& \qquad \qquad  \times\Big(  \int_{\Cc_{2R}(z^{\prime}_0)} |\nabla u |^{2_*} \dd z \Big)^{\frac{1}{2 } \frac{1}{2_*}}   \Big( \int_{\Gamma_{2R}(t_0^{\prime})} \Big( \int_{B_{2R}(x^{\prime}_0)} |\nabla u |^{2} \dd x \Big)^{ \frac{1}{2} \frac{2_*}{2 \cdot 2_* -1}}  \dd t \Big)^{\frac{2 \cdot  2_* -1}{2 \cdot 2_*}}\\
&\lesssim R^{\frac{3}{2} - \frac{1}{d}} \Big( \int_{\Cc_{4R}(z^{\prime}_0)} |\nabla u|^2 \dd z\Big)^{\frac{3}{4}} \Big(  \int_{\Cc_{2R}(z^{\prime}_0)} |\nabla u |^{2_*} \dd z \Big)^{\frac{1}{2} \frac{1}{2_*}}\\
& \qquad  \qquad + R^{\frac{d+7}{2}  - \frac{1}{d} } \| f(u)\|_{L^{\infty}(\Cc_{4R}(z^{\prime}_0))} \Big( \int_{\Cc_{4R}(z^{\prime}_0)} |\nabla u|^2 \dd z\Big)^{\frac{1}{4}} \Big(  \int_{\Cc_{2R}(z^{\prime}_0)} |\nabla u |^{2_*} \dd z \Big)^{\frac{1}{2 } \frac{1}{2_*}}
\end{split}
\end{align}
where $c \in \R$. Notice that in the second line above we have applied H\"{o}lder's inequality in time and in the third line we have applied Jensen's inequality for concave functions as
\begin{align*}
\Big(\int_{\Gamma(t_0^{\prime}, 2R)}  \Big( \int_{B_{2R}(x^{\prime}_0)} |\nabla u |^{2} \dd x \Big)^{ \frac{a}{2} }  \dd t \Big)^{\frac{1}{2a}} \lesssim R^{\frac{1}{a} - \frac{1}{2}}  \Big( \int_{\Cc_{2R}(z^{\prime}_0)} |\nabla u |^{2} \dd z \Big)^{ \frac{1}{4} }
\end{align*}
for $a = \frac{2_*}{2 \cdot 2_* -1}$. Treating the two terms on the right-hand side of \eqref{May_13_2} separately, we notice that an application of Young's inequality yields that 
\begin{align}
\label{May_13_3}
\begin{split}
& R^{\frac{3}{2} - \frac{1}{d}} \Big( \int_{\Cc_{4R}(z^{\prime}_0)} |\nabla u|^2 \dd z\Big)^{\frac{3}{4}} \Big(  \int_{\Cc_{2R}(z^{\prime}_0)} |\nabla u |^{2_*} \dd z \Big)^{\frac{1}{2 } \frac{1}{2_*}} \\
& \qquad \qquad \lesssim \gamma R^2 \int_{\Cc_{4R}(z^{\prime}_0)} |\nabla u|^2 \dd z + C(\gamma)  R^{-\frac{4}{d}} \Big(  \int_{\Cc_{2R}(z^{\prime}_0)} |\nabla u |^{2_*} \dd z \Big)^{\frac{2}{2_*}},
\end{split}
\end{align}
for any $\gamma>0$. For the second term of \eqref{May_13_2}, we additionally use that $R \leq 3/2$ and two applications of Young's inequality to write 
\begin{align}
\label{May_13_4}
\begin{split}
& R^{\frac{d+7}{2}  - \frac{1}{d} } \| f(u)\|_{L^{\infty}(\Cc_{4R}(z^{\prime}_0))} \Big( \int_{\Cc_{4R}(z^{\prime}_0)} |\nabla u|^2 \dd z\Big)^{\frac{1}{4}} \Big(  \int_{\Cc_{2R}(z^{\prime}_0)} |\nabla u |^{2_*} \dd z \Big)^{\frac{1}{2} \frac{1}{2_*}}\\
& \lesssim    R^{\frac{3}{2}  - \frac{1}{d} }  \Big( R^{   \frac{3(d + 4)}{4} } \| f(u)\|_{L^{\infty}(\Cc_{4R}(z^{\prime}_0))}^{\frac{3}{2}} + \Big( \int_{\Cc_{4R}(z^{\prime}_0)} |\nabla u|^2 \dd z\Big)^{\frac{3}{4}} \Big) \Big(  \int_{\Cc_{2R}(z^{\prime}_0)} |\nabla u |^{2_*} \dd z \Big)^{\frac{1}{2} \frac{1}{2_*}}\\
& \lesssim \gamma R^{2} \int_{\Cc_{4R}(z_0)} |\nabla u|^2 \dd z + C(\gamma)  \Big(R^{-\frac{4}{d}} \Big(  \int_{\Cc_{2R}(z_0)} |\nabla u |^{2_*} \dd z \Big)^{\frac{2}{2_*}}  + \| f(u)\|^2_{L^{\infty}(\Cc_{4R}(z^{\prime}_0))} \Big)
\end{split}
\end{align}
for any $\gamma >0$. To obtain \eqref{intermediate_0} we then combine \eqref{May_13_1}, \eqref{May_13_2}, \eqref{May_13_3}, and \eqref{May_13_4} with the result of Lemma \ref{nonlinear_cacc}. 

\end{proof}

\section{Argument for Corollary \ref{constant_coeff}: Interior regularity estimates for solutions of (\ref{MS_system_frozen})}

\subsection{Proof of Lemma \ref{linear_cacc}: A Caccioppoli inequality for solutions of (\ref{MS_system_frozen})}

The main idea for the proof of Lemma \ref{linear_cacc}  is to linearize the methods in the argument for Lemma \ref{nonlinear_cacc}. In particular, the motivation for the below argument is that we approximate $h_{\epsilon}$ with its Taylor expansion out to second order, keeping only the convex term. This leads us to replacing the calculation \eqref{time_deriv} by instead taking the time derivative of $ (\bar{u}-b) \cdot h^{\prime \prime}_{\epsilon}( (u)_{z_0, R}) (\bar{u}-b)$ for $b \in \R^n$.

\begin{proof}[Proof of Lemma \ref{linear_cacc} ] We use essentially the same  cut-off function $\eta$ as in the proof of Lemma \ref{nonlinear_cacc}. In particular, we let $\eta = \chi_{x_0^{\prime}, r} \tau$, where $\tau \equiv 1$ on $\Gamma_{r}(t^{\prime}_0)$ and $\tau \equiv 0 $ for $t \leq t^{\prime}_0 - (2r)^2$ such that $|\partial_t \tau| \lesssim 1/r^2$.  We then take the time derivative of $(\ub - b) \cdot h^{\prime \prime }_\epsilon((u)_{z_0, R})(\ub - b) \eta^2$:
	\begin{align}
	\label{time_deriv_2}
	\begin{split}
	& \int_{B_{2r}(x^{\prime}_0)} (\ub - b) \cdot h^{\prime \prime }_\epsilon((u)_{z_0, R})(\ub - b) \eta^2 \dd x \Big|_{t=t_0^{\prime}}\\
	&  =
	\int_{C_{2r}(z^{\prime}_0)} \partial_t ((\ub - b) \cdot h^{\prime \prime }_\epsilon((u)_{z_0, R})(\ub - b) \eta^2) \dd z
	\\&  = 2 \int_{C_{2r}(z_0^{\prime})} \sum_{i=1}^n \eta^2  (e_i \cdot  h^{\prime \prime }_{\epsilon} ((u)_{z_0, R}) e_i)  (\ub_i - b_i) \partial_t  \ub_i \dd z + 2\int_{C_{2r}(z_0^{\prime})}   (\ub - b)  \cdot  h^{\prime \prime }_{\epsilon} ((u)_{z_0, R}) (\ub - b)  \eta\partial_t \eta  \dd z
	\\&=
	- 2
	\int_{C_{2r}(z_0^{\prime})}\eta^2   \nabla \ub :  h_\epsilon''((u)_{z_0, R}) A((u)_{z_0, R}) \nabla \ub \dd z \\
	&\quad -4
	\int_{C_{2r}(z_0^{\prime})}  \eta [h_\epsilon''((u)_{z_0, R})( \ub -b )\otimes \nabla \eta ]: A((u)_{z_0,R}) \nabla \ub \dd z \\
	& \quad + 2 \int_{C_{2r}(z_0^{\prime})}  \eta^2  (\ub - b)\cdot  h^{\prime \prime }_\epsilon ((u)_{z_0, R})  f(u) \dd z\\
	&\quad + 2\int_{C_{2r}(z_0^{\prime})}   (\ub - b)  \cdot  h^{\prime \prime }_{\epsilon} ((u)_{z_0, R}) (\ub - b)  \eta\partial_t \eta  \dd z
	\end{split}
	 \end{align}
Using both the properties \hyperlink{C1}{(\textbf{C1})} or \hyperlink{C1p}{(\textbf{C1$^{\prime}$})} --which may be applied since, for $i = 1, \ldots, n$, $(\partial_i \bar{u}_1, \ldots, \partial_i \bar{u}_n) \in \Xi_0$ (this follows from the construction of $\bar{u}$ in Section \ref{campanato}) and $(u)_{z_0, R} \in \Xi_1$ --   and  \hyperlink{C2}{(\textbf{C2})}, the assumption \hyperlink{H4}{(\textbf{H4})}, and the properties of $\eta$, we can then complete the argument just as in Lemma \ref{nonlinear_cacc}. 
\end{proof}

\subsection{Proof of Corollary \ref{constant_coeff}} Using standard arguments we now upgrade the Caccioppoli estimate of Lemma \ref{linear_cacc} into the required interior regularity estimates for $\bar{u}$.

\begin{proof}[Proof of Corollary \ref{constant_coeff}] We begin by showing \eqref{constant_coeff_reg_1}. Notice that we may assume $r \leq \tilde{R}/8$ as otherwise the estimates are clear. Furthermore, we initially set $\tilde{R} =1$.

For our argument, in the cylinder $\Cc_{1}(z_0^{\prime})$ we decompose $\bar{u} = w + \bar{w}$, where $\bar{w}$ solves 
\begin{align}
\label{MS_system_frozen_4}
\begin{split}
\partial_t \bar{w} - \nabla \cdot A( (u)_{z_0,R} ) \nabla \bar{w} & = 0  \, \,\, \quad \qquad \textrm{in}  \quad \Cc_{1}(z_0^{\prime}),\\
\bar{w}& = \bar{u} \quad \quad \quad  \textrm{ on} \quad \partial^P \Cc_{1}(z_0^{\prime}).
\end{split}
\end{align}
By the triangle inequality we then have that 
\begin{align}
\label{july_1}
\int_{\Cc_r(z_0^{\prime})} |\nabla \bar{u}|^2 \dd z \lesssim \int_{\Cc_r(z_0^{\prime})} |\nabla \bar{w}|^2 \dd z +  \int_{\Cc_r(z_0^{\prime})} |\nabla w |^2 \dd z.
\end{align}

To treat the first term on the right-hand side of \eqref{july_1}, we notice that an iterative application of Lemma \ref{linear_cacc} with $f \equiv 0$ yields that 
\begin{align}
\label{iterate_Cacc}
\int_{\Cc_{1/2}(z_0^{\prime})}  |\nabla^{k+1} \bar{w}|^2 \dd z \lesssim_k \int_{\Cc_{1}(z_0^{\prime})} |\nabla \bar{w}|^2 \dd z
\end{align}
for any $k \geq 0$, where we have used that \eqref{MS_system_frozen_4} has constant coefficients. The bound for terms with time derivatives is given by 
\begin{align}
\label{iterate_Cacc_2}
\int_{\Cc_{1/2}(z_0^{\prime})}  |\partial_t^l \nabla^{k+1} \bar{w}|^2 \dd z \lesssim_{k,l} \int_{\Cc_{1}(z_0^{\prime})} |\nabla \bar{w}|^2 \dd z 
\end{align}
for $l,k \geq 0$, which can easily be shown by induction on $l$ using the equation \eqref{MS_system_frozen_4}. The Sobolev embedding and \eqref{iterate_Cacc_2} then yield
\begin{align}
\label{July_4}
\begin{split}
\int_{\Cc_r(z_0^{\prime})} |\nabla \bar{w}|^2 \dd z &\lesssim r^{d+2} \sup_{y \in \Cc_{1/2}(z_0^{\prime})} |\nabla \bar{w}(y)|^2\\
& \lesssim r^{d+2} \| \nabla \bar{w} \|_{H^{\lfloor \frac{d}{2} \rfloor +1}(\Cc_{1/2}(z_0^{\prime})) } \lesssim r^{d+2} \int_{\Cc_{1}(z_0^{\prime})}  |\nabla \bar{w}|^2 \dd z.
\end{split}
\end{align}
We complete this estimate by noticing that 
\begin{align}
\label{new_J_26}
\int_{\Cc_{1}(z_0^{\prime})}  |\nabla \bar{w}|^2 \dd z \lesssim \int_{\Cc_{1}(z_0^{\prime})}  |\nabla \bar{u}|^2 \dd z + \|f(u)\|^2_{L^{\infty}(\Cc_{1}(z_0^{\prime})) },
\end{align}
 which can be seen by testing the system 
\begin{align}
\label{MS_system_frozen_4_2}
\begin{split}
\partial_t B (\bar{w} - \bar{u}) - \nabla \cdot B A( (u)_{z_0,R} ) \nabla (\bar{w} - \bar{u})& =  - B f(u)   \, \,\, \quad \qquad \textrm{in}  \quad \Cc_{1}(z_0^{\prime}),\\
\bar{w} - \bar{u}& =  0 \quad \quad \quad \qquad \quad  \,  \textrm{ on} \quad \partial^P \Cc_{1}(z_0^{\prime}),
\end{split}
\end{align}
where again $B =\sqrt{h_{\epsilon}^{\prime\prime} ( (u)_{z_0,R})}$, with $B (\bar{w} - \bar{u})$ and using the properties of the glued entropy along with the Poincar\'{e} inequality with homogeneous Dirichlet boundary data on balls. In particular, these ingredients yield that 
\begin{align}
\label{new_J_26_e}
\int_{\Cc_{1}(z_0^{\prime})}  |\nabla (\bar{w} - \bar{u})|^2 \dd z  \lesssim \|f(u)\|^2_{L^{\infty}(z_0^{\prime})}.
\end{align}

For the second term on the right-hand side of \eqref{july_1}, we notice that $w$ solves 
\begin{align}
\label{MS_system_frozen_5}
\begin{split}
\partial_t w - \nabla \cdot A( (u)_{z_0,R} ) \nabla w & = f(u)  \, \,\, \, \, \quad \qquad \textrm{in}  \quad \Cc_{1}(z_0^{\prime}),\\
w& = 0 \quad \quad \quad \qquad   \textrm{ on} \quad \partial^P \Cc_{1}(z_0^{\prime}).
\end{split}
\end{align}
Left-multiplying the system by $B =\sqrt{h_{\epsilon}^{\prime\prime} ( (u)_{z_0,R})}$ and testing with $B w$ yields 
\begin{align}
\label{July_3}
\begin{split}
\int_{\Cc_{1}(z_0^{\prime}) } |\nabla w|^2 \dd z & \lesssim \int_{\Cc_{1}(z_0^{\prime}) }\nabla w : h_{\epsilon}^{\prime\prime} ( (u)_{z_0,R})  A( (u)_{z_0,R} ) \nabla w \dd z\\
&  \lesssim \int_{\Cc_{1} (z_0^{\prime})} |w f(u)| \dd z 
\lesssim \Big(\int_{\Cc_{1} (z_0^{\prime})} |\nabla w|^2 \dd z \Big)^{\frac{1}{2}} \|f(u)\|_{L^{\infty}(\Cc_1(z_0^{\prime}))},
\end{split}
 \end{align}
 where we have again used the properties of the glued entropy and the Poincar\'{e} inequality with homogeneous Dirichlet boundary data on balls. After rescaling, the combination of \eqref{july_1}, \eqref{July_4}, \eqref{new_J_26} and \eqref{July_3} yields \eqref{constant_coeff_reg_1}.

We now show \eqref{constant_coeff_reg_2}. Again, we assume that $r \leq \tilde{R}/8$ and to begin set $\tilde{R} =1$. The triangle inequality then allows us to write
\begin{align}
\label{triangle_26}
\int_{\Cc_r(z_0^{\prime})} |\nabla \bar{u} -  (\nabla \bar{u})_{z_0^{\prime},r} |^2 \dd z & 
\lesssim \int_{\Cc_r(z_0^{\prime})} |\nabla \bar{w} -  (\nabla \bar{w})_{z_0^{\prime},r} |^2 \dd z  + \int_{\Cc_r(z_0^{\prime})} |\nabla w -  (\nabla w)_{z_0^{\prime},r} |^2 \dd z.
\end{align}
For the first term on the right-hand side we use Lemma \ref{Poincare}, in combination with \eqref{July_4} applied to $\nabla^2 \bar{w}$ and Lemma \ref{linear_cacc} to find
\begin{align} 
\label{term_1_26}
  \int_{\Cc_r(z_0^{\prime})} |\nabla \bar{w} -  (\nabla \bar{w})_{z_0^{\prime},r} |^2 \dd z &  \lesssim r^2 \int_{\Cc_{2r}(z_0^{\prime})} |\nabla^2 \bar{w}|^2 \dd z \\
&  \lesssim r^{d+4} \int_{\Cc_{1/2}(z_0^{\prime})} |\nabla^2 \bar{w}|^2 \dd z  \lesssim r^{d +4} \int_{\Cc_{1} (z_0^{\prime})} |\nabla \bar{w} - (\nabla \bar{w})_{z_0^{\prime},1} |^2 \dd z.
\end{align} 
We can continue this estimate by noticing that 
\begin{align}
\label{new_J_26_2}
 \int_{\Cc_{1} (z_0^{\prime})} |\nabla \bar{w} - (\nabla \bar{w})_{z_0^{\prime},1} |^2 \dd z 
\lesssim  \int_{\Cc_{1} (z_0^{\prime})} |\nabla \bar{u} - (\nabla \bar{u})_{z_0^{\prime},1} |^2 \dd z + \|f(u)\|^2_{L^{\infty}(\Cc_{1}(z_0^{\prime})) },
\end{align}
which can be verified via \eqref{new_J_26_e} and the triangle inequality. In particular, we write 
\begin{align*}
 \int_{\Cc_{1} (z_0^{\prime})} |\nabla \bar{w} - (\nabla \bar{w})_{z_0^{\prime},1} |^2 \dd z 
& \lesssim  \int_{\Cc_{1} (z_0^{\prime})} |\nabla \bar{u} - (\nabla \bar{u})_{z_0^{\prime},1} |^2 \dd z +  \int_{\Cc_{1} (z_0^{\prime})} |\bar{w} - \bar{u} - (\nabla (\bar{w} - \bar{u}) )_{z_0^{\prime},1}  |^2 \dd z \\
& \lesssim \int_{\Cc_{1} (z_0^{\prime})} |\nabla \bar{u} - (\nabla \bar{u})_{z_0^{\prime},1} |^2 \dd z +  \int_{\Cc_{1} (z_0^{\prime})} |\bar{w} - \bar{u} |^2 \dd z \\
& \lesssim  \int_{\Cc_{1} (z_0^{\prime})} |\nabla \bar{u} - (\nabla \bar{u})_{z_0^{\prime},1} |^2 \dd z +  \|f(u)\|^2_{L^{\infty}(\Cc_{1}(z_0^{\prime})) }.
\end{align*}

The second term on the right-hand side of \eqref{triangle_26} is easily treated using \eqref{July_3} as  
\begin{align}
\label{term_2_26}
\int_{\Cc_r(z_0^{\prime})} |\nabla w -  (\nabla w)_{z_0^{\prime},r} |^2 \dd z \lesssim \int_{\Cc_1(z_0^{\prime})} |\nabla w|^2 \dd z \lesssim \|f (u) \|^2_{L^{\infty}(\Cc_1(z_0^{\prime}))}.
\end{align}
Combining \eqref{triangle_26}, \eqref{term_1_26}, \eqref{new_J_26_2}, and \eqref{term_2_26} yields 
\begin{align}
\int_{\Cc_r(z_0^{\prime})} |\nabla \bar{u} -  (\nabla \bar{u})_{z_0^{\prime},r} |^2 \dd z \lesssim  r^{d +4} \int_{\Cc_{1} (z_0^{\prime})} |\nabla \bar{u} - (\nabla \bar{u})_{z_0^{\prime},1} |^2 \dd z +  \|f (u) \|^2_{L^{\infty}(\Cc_1(z_0^{\prime}))}, 
\end{align}
which yields \eqref{constant_coeff_reg_2} after rescaling.

\end{proof} 

\section{Proofs of Theorem \ref{Theorem_1} and Corollary \ref{Theorem_1_corollary}: Partial $C^{0,\alpha}$-regularity}

\label{Proof_Theorem_1}

\subsection{Proof of Theorem \ref{Theorem_1}}

We are now in the position to prove Theorem \ref{Theorem_1}. Since we have already described the strategy in Section \ref{campanato}, we will now only fill in the details. 

\begin{proof}  Fix $z_0 \in \Lambda_0$, where we assume that $\Lambda_0$ satisfies \eqref{set_condition_1} and the equivalent \eqref{set_condition_2},  and $R>0$ such that $\Cc_{R} (z_0) \subset \Lambda$. We may assume $0< r< R/16$ as otherwise \eqref{excess_decay} trivially holds. 

As already mentioned in Section \ref{campanato}, our strategy is to view $u$ as a perturbation of $\bar{u}$ solving \eqref{MS_system_frozen}. In particular, by the triangle inequality we obtain \eqref{triangle}. From this latter relation combined with \eqref{energy_estimate_use}, which we must still prove, and \eqref{constant_coeff_reg_1} of Corollary \ref{constant_coeff}, we obtain \eqref{triangle_2}. We rewrite \eqref{triangle_2} here for convenience:
\begin{align}
\label{triangle_2_2}
\int_{\Cc_{2r}(z_0)} |\nabla u |^2 \dd z \lesssim   \Big( \frac{r}{R} \Big)^{d+2}\int_{\Cc_{R/8}(z_0)} |\nabla u|^2 \dd z +R^{d+4} + \int_{\Cc_{R/8}(z_0)} |\nabla \bar{v}|^2 \dd z.
\end{align}
We also now prove \eqref{energy_estimate_use}. For this, we first use the triangle inequality to write
\begin{align}
\label{July_2}
\int_{\Cc_{R/8}(z_0)} |\nabla \bar{u}|^2 \dd z \lesssim \int_{\Cc_{R/8}(z_0)} |\nabla u |^2 \dd z + \int_{\Cc_{R/8}(z_0)} |\nabla \bar{v}|^2 \dd z.
\end{align}
To control the second term on the right-hand side we test \eqref{MS_system_frozen_aux} with $B \bar{v}$ and use the properties of the glued entropy to obtain
\begin{align}
\label{energy_use_proof}
\begin{split}
 \int_{\Cc_{R/8}(z_0)} |\nabla \bar{v}|^2 \dd z &  \lesssim   \int_{\Cc_{R/8}(z_0)} \nabla \bar{v} : h^{\prime\prime}_{\epsilon} ((u)_{z_0, R}) A(  (u)_{z_0,R})   \nabla \bar{v} \dd z \\
 & = - \int_{\Cc_{R/8}(z_0)}  \partial_t  |B \bar{v}|^2 \dd z - \int_{\Cc_{R/8}(z_0)}  \nabla \bar{v} :  h^{\prime\prime}_{\epsilon} ((u)_{z_0, R}) (A(  (u)_{z_0,R}) - A(u))  \nabla u   \dd z \quad  \\
& \lesssim \int_{\Cc_{R/8}(z_0)} \big(   \gamma  | \nabla \bar{v}|^2 + C(\gamma) |\nabla u|^2 \big)  \dd z,
\end{split}
\end{align}
for any $\gamma>0 $. Notice that here we have used that $\bar{v} \equiv 0$ on $\partial^P\Cc_{R/8}(z_0)$.

We now proceed as indicated in Section \ref{campanato} and derive \eqref{error_estimate_frozen}. In particular, we repeat the calculation in \eqref{energy_use_proof} above, but treat the right-hand term using Young's and H\"{o}lder's inequalities as 
\begin{align}
\label{final_3}
\begin{split}
&- \int_{\Cc_{R/8}(z_0)}  \nabla \bar{v} :  h^{\prime\prime}_{\epsilon} ((u)_{z_0, R}) (A(  (u)_{z_0,R}) - A(u))  \nabla u   \dd z  \\
& \qquad \lesssim  \gamma \int_{\Cc_{R/8}(z_0)}  |\nabla \bar{v}|^2 \dd z  + C(\gamma) \Big( \int_{\Cc_{R/8}(z_0)}   |\nabla  u|^p  \dd z  \Big)^{\frac{2}{p}} \Big(  \int_{\Cc_{R/8}(z_0)}  |A((u)_{z_0,R}) - A(u)|^{\frac{2p}{p-2}} \Big)^{\frac{p-2}{p}},
\end{split}
\end{align}
for any $\gamma>0$ and with $p> 2$ chosen such that Proposition \ref{reverse_holder} may be applied. Applying Proposition \ref{reverse_holder} and introducing the modulus of continuity $\omega$ of $A$, we continue \eqref{error_estimate_frozen} (combined with \eqref{energy_use_proof}) as
\begin{align}
\label{modulus_estimate_1}
\begin{split}
 \int_{\Cc_{R/8}(z_0)} |\nabla \bar{v}|^2 \dd z & \lesssim  \Big( \int_{\Cc_{R/2}(z_0)}   |\nabla  u|^2  \dd z  + R^{d+4} \Big) \Big(  \fint_{\Cc_{R}(z_0)}  |A((u)_{z_0,R}) - A(u)|^{\frac{2p}{p-2}} \Big)^{\frac{p-2}{p}} \\
 &  \lesssim \Big( \int_{\Cc_{R/2}(z_0)}   |\nabla  u|^2  \dd z  + R^{d+4} \Big) \Big( \fint_{\Cc_R(z_0)} \omega( |(u)_{z_0,R} - u|^2 )^{\frac{2p}{p-2}} \dd z \Big)^{\frac{p-2}{p}}\\
& \lesssim \Big( \int_{\Cc_{R/2}(z_0)}  |\nabla  u|^2 \dd z  + R^{d+4}\Big)  \Big( \fint_{\Cc_R(z_0)} \omega( | (u)_{z_0, R}- u|^2 ) \dd z\Big)^{\frac{p-2}{p}} \\
&\lesssim    \Big( \int_{\Cc_{R/2}(z_0)}  |\nabla  u|^2 \dd z  + R^{d+4}\Big)  \omega \left( \fint_{\Cc_R(z_0)} | (u)_{z_0,R} - u|^2 \dd z \right)^{\frac{p-2}{p}} .\\
\end{split}
\end{align}
Notice that we have used the existence of $c(q) \in \R_+$ such that $\omega^q \leq c(q)  \omega$, which holds since $\omega$ is bounded, and also that $\omega$ is concave.

To finish our argument we now assume that $r = \tau R$ for some $\tau \in (0,1/16)$. By the condition \eqref{set_condition_1} on $\Lambda_0$, we can choose $R_0$ small enough so that 
\begin{align}
\label{def_chi}
\chi( z_0, R):=  \omega \left( \fint_{\Cc_R(z_0)} | (u)_{z_0,R} - u|^2 \dd z \right)^{\frac{p-2}{p}} \lesssim \tau^{d + 2}
  \end{align}
holds for $R< R_0$. Combining this with \eqref{triangle_2_2} and \eqref{modulus_estimate_1}, along with the estimate \eqref{nonlinear_cacc_eq_2} and Lemma \ref{Poincare}, we then obtain that 
\begin{align}
\label{error_estimate_frozen_4_2}
\begin{split}
\int_{\Cc_{\tau R} (z_0)} | u - (u)_{z_0,\tau R}|^2\dd z& \lesssim (\tau R)^2\Big(\tau^{d+2} \int_{\Cc_{R/8}(z_0)} | \nabla u |^2 \dd z + R^{d+4} +  \int_{\Cc_{R/8}(z_0)} |\nabla \bar{v}|^2 \dd z \Big)\\
& \lesssim \tau^{d+2} (\tau R)^2  \int_{\Cc_{R/2}(z_0)} | \nabla u|^2 \dd z +R^{d+4}\\ 
&   \lesssim \tau^{d+4}  \int_{\Cc_{ R} (z_0)} | u - (u)_{z_0,R}|^2\dd z+ R^{d+4}  
\end{split}
\end{align}
where we have used that $r< R < 1$.  By \eqref{error_estimate_frozen_4_2} we can set $\tau$ small enough so that 
\begin{align}
\label{new_june}
\phi(z_0; \tau R) \lesssim \tau^{2} \phi (z_0; R)  +  \tau^{-(d+2)}R^2,
\end{align}
where $\phi$ is defined in \eqref{excess}.

Iterating \eqref{new_june} (see, e.g., \cite[Theorem 3.1]{GS_82}) then yields \eqref{excess_decay} for any $0<r<R< R_0$ and $z_0^{\prime} = z_0$. 
Since $\chi(z_0, R)$ defined in \eqref{def_chi} is continuous in $z_0$, we find that  \eqref{new_june} holds uniformly for any  $0<r\leq R< R_0$ in a neighborhood of $z_0$.  

\end{proof}

\subsection{Proof of Corollary \ref{Theorem_1_corollary}: Estimate on the Hausdorff dimension of the singular set}
Using the characterization \eqref{set_condition_2} of the singular set $\Lambda \setminus \Lambda_0$, the argument for Corollary \ref{Theorem_1_corollary} is entirely classical and taken from \cite{GS_82}. We include it here only for completeness. 

The proof of Corollary \ref{Theorem_1_corollary} mainly depends on the following result from \cite{GG_78}.

\begin{prop}\label{giusti} For $f \in L^1_{\rm{loc}}(\Lambda)$ and $0<k<d+2$, we denote 
\begin{align}
F_k := \Big\{ z_0 \in \Lambda \, \vert \, \limsup_{\rho \rightarrow 0} \rho^{-k} \int_{\Cc_{\rho}(z_0)} |f | \dd z > 0 \Big\}.
\end{align}
Then we find that 
\begin{align*}
H^{k}(F_k) = 0.
\end{align*}
\end{prop}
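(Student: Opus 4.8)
The plan is to prove this classical covering lemma via a Vitali-type covering argument combined with the absolute continuity of the integral. First I would fix $k \in (0, d+2)$ and $f \in L^1_{\rm{loc}}(\Lambda)$, and it suffices to show $H^k(F_k \cap K) = 0$ for every compact $K \subset \Lambda$, so I would work inside a fixed compact set on which $\int_K |f| \, \dd z < \infty$. The key observation is that $F_k$ is contained in the set where the limsup is strictly positive, so $F_k = \bigcup_{m \in \N} F_k^{(m)}$, where
\begin{align}
F_k^{(m)} := \Big\{ z_0 \in \Lambda \, \vert \, \limsup_{\rho \rightarrow 0} \rho^{-k} \int_{\Cc_{\rho}(z_0)} |f| \dd z > \tfrac{1}{m} \Big\},
\end{align}
and it is enough to show $H^k(F_k^{(m)} \cap K) = 0$ for each fixed $m$.

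Next I would use the absolute continuity of the integral: given $\tau > 0$, choose $\delta > 0$ so that $\int_E |f| \, \dd z < \tau$ whenever $|E| < \delta$ (Lebesgue measure of the space-time set $E$). For every $z_0 \in F_k^{(m)} \cap K$, by definition of the limsup there exist arbitrarily small radii $\rho = \rho(z_0) < \eps_0$ with $\rho^{-k} \int_{\Cc_\rho(z_0)} |f| \, \dd z > \frac{1}{m}$; shrinking $\eps_0$ first so that every such cylinder has measure less than $\delta$, these cylinders form a fine cover of $F_k^{(m)} \cap K$. Applying the Vitali covering lemma (in the parabolic metric $\delta$ from \eqref{p_metric}, using that the parabolic cylinders $\Cc_\rho(z_0)$ satisfy the requisite doubling/engulfing property), I would extract a countable disjoint subfamily $\{\Cc_{\rho_i}(z_i)\}_i$ such that $F_k^{(m)} \cap K \subset \bigcup_i \Cc_{5\rho_i}(z_i)$. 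Then
\begin{align}
\sum_i \delta(\Cc_{5\rho_i}(z_i))^k \lesssim \sum_i \rho_i^k \leq m \sum_i \int_{\Cc_{\rho_i}(z_i)} |f| \dd z = m \int_{\bigcup_i \Cc_{\rho_i}(z_i)} |f| \dd z \leq m \tau,
\end{align}
using disjointness in the last step, where the implicit constant absorbs the factor $5^k$ and the comparison between the diameter and the radius. Since the covering cylinders all have parabolic diameter bounded by $10\eps_0$, this bounds the pre-Hausdorff content at scale $10\eps_0$, and letting $\tau \to 0$ (hence $\eps_0 \to 0$) gives $H^k(F_k^{(m)} \cap K) = 0$. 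Taking the countable union over $m$ and over a compact exhaustion of $\Lambda$ completes the proof.

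The main obstacle I anticipate is the Vitali covering step in the parabolic metric: parabolic cylinders $\Cc_\rho(z_0) = B_\rho(x_0) \times (t_0 - \rho^2, t_0)$ are not balls in a genuine metric space, and one must be slightly careful that the engulfing property $\Cc_{\rho_1}(z_1) \cap \Cc_{\rho_2}(z_2) \neq \emptyset$ with $\rho_2 \leq \rho_1$ implies $\Cc_{\rho_2}(z_2) \subset \Cc_{5\rho_1}(z_1)$ (or whatever dilation factor) actually holds — this is where the non-symmetric, backward-in-time nature of the cylinders enters. This is standard and is precisely the reason the Hausdorff measure in \eqref{hausdorff_measure} was set up with respect to $\delta$ in the first place; the argument is identical to the one in \cite{GG_78} and \cite[Section 3]{GS_82}, so I would cite those and only indicate the adaptation rather than reprove the covering lemma from scratch.
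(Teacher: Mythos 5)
The paper does not prove this proposition at all --- it is quoted verbatim from \cite{GG_78} --- so your proposal must be measured against the classical argument, and as written it has a genuine gap at the absolute-continuity step. You choose $\delta>0$ so that $\int_E|f|\,\dd z<\tau$ whenever $|E|<\delta$, and then arrange only that \emph{each individual} cylinder $\Cc_{\rho}(z_0)$ has Lebesgue measure $<\delta$. But the final inequality $m\int_{\bigcup_i\Cc_{\rho_i}(z_i)}|f|\,\dd z\leq m\tau$ requires the \emph{union} of the disjoint Vitali subfamily to have measure $<\delta$, and nothing in your construction gives this: countably many disjoint cylinders of individually small measure can fill up a whole neighborhood of $K$, so the best you can conclude is $\sum_i\rho_i^k\lesssim m\int_{K'}|f|\,\dd z$ for a compact neighborhood $K'$, i.e.\ only $H^k(F_k^{(m)}\cap K)<\infty$, not $=0$.

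The missing idea is to first show that $F_k^{(m)}$ (indeed $F_k$) is Lebesgue-null and then localize the covering inside a small open set. For instance, by the Lebesgue differentiation theorem in the doubling metric space $(\R^{d+1},\delta)$ (or from the finiteness bound above together with $k<d+2$), for a.e.\ $z_0$ one has $\limsup_{\rho\to0}\rho^{-(d+2)}\int_{\Cc_\rho(z_0)}|f|\,\dd z<\infty$, hence $\rho^{-k}\int_{\Cc_\rho(z_0)}|f|\,\dd z\to0$ since $k<d+2$; thus $|F_k^{(m)}|=0$. Now, given $\tau$ and the corresponding $\delta$, pick an open set $U\supset F_k^{(m)}\cap K$ with $\overline{U}\subset\Lambda$ and $|U|<\delta$, and admit into your fine cover only cylinders $\Cc_\rho(z_0)\subset U$ (possible because $\rho$ may be taken arbitrarily small). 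The disjoint subfamily then lies in $U$, so $\sum_i\rho_i^k\leq m\int_U|f|\,\dd z<m\tau$, and your content estimate at scale $10\eps_0$ goes through; letting $\eps_0$ and $\tau$ tend to $0$ independently gives $H^k(F_k^{(m)}\cap K)=0$. Your worry about the engulfing property of the one-sided cylinders is real but harmless: $\Cc_\rho(z_0)$ is contained in the two-sided ball $\{z:\delta(z,z_0)<\rho\}$ of the genuine metric $\delta$ from \eqref{p_metric}, so one runs the $5r$-covering lemma on these balls (disjointness of the balls implies disjointness of the cylinders, and the dilated balls have $\delta$-diameter $\lesssim\rho_i$, which is all that the definition \eqref{hausdorff_measure} requires). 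With these two repairs your argument is the standard proof of \cite{GG_78}.
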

\noindent As previously mentioned, we use the Hausdorff measure with respect to $\delta$ --see \eqref{hausdorff_measure}.

We now apply this result to prove Corollary \ref{Theorem_1_corollary}:
 
 \begin{proof}[Proof of Corollary \ref{Theorem_1_corollary}]
To obtain the result we apply Proposition \ref{giusti} with $f = |\nabla u|^p$, where $p>2$ is chosen such that Proposition \ref{reverse_holder} holds. For this we first notice that $ |\nabla u|^p \in L_{\loc}^1(\Lambda)$, since $ u \in L^2(0,T; H^1(\Omega; \R^n))$ and by an application of Proposition \ref{reverse_holder}. Furthermore, notice that by H\"{o}lder's inequality and the characterization  \eqref{set_condition_2} of the singular set, we have that 
\begin{align}
0<\epsilon_1<  \limsup_{\rho \rightarrow 0} \rho^{-d} \int_{\Cc_\rho(z_0)} |\nabla u|^2 \dd z  \lesssim  \limsup_{\rho \rightarrow 0}  \Big( \rho^{-(d  - (p -2))} \int_{\Cc_{\rho}(z_0)} |\nabla u|^p \dd z \Big)^{\frac{2}{p}}
\end{align}
for any $z_0 \in \Lambda \setminus \Lambda_0$. By Proposition \ref{giusti} this yields \eqref{hausdorff_measure_result} for $\gamma = p-2>0$. 
 \end{proof}
 
 \section{Proof of Theorem \ref{higher_reg}: Partial $C^{1,\alpha}$-regularity}
 \label{theorem_7_proof}

The argument for Theorem \ref{higher_reg} is a slight variation of the proof of Theorem \ref{Theorem_1}, which takes advantage of the H\"{o}lder continuity of the coefficients $A_{ij}$ and uses \eqref{constant_coeff_reg_2} as opposed to \eqref{constant_coeff_reg_1}. The H\"{o}lder regularity of the coefficients, in particular, gives us more control over the modulus of continuity called $\omega$ in the proof of Theorem \ref{Theorem_1}. The following argument is inspired by the proof of \cite[Theorem 3.2]{GS_82}. 

\begin{proof} Let $z_0 \in \Lambda_0$ and assume that $\Cc_R(z_0) \subset \Lambda$. As in the argument for Theorem \ref{Theorem_1}, we assume that $r < R/16$. 

We begin by using \eqref{constant_coeff_reg_2}, the triangle inequality, and the observation that
\begin{align}
\label{July_5}
\begin{split}
& \int_{\Cc_{R/8}(z_0)} |\nabla \bar{u} - (\nabla \bar{u})_{z_0, R/8}|^2 \dd z \leq \int_{\Cc_{R/8}(z_0)} |\nabla u - (\nabla u)_{z_0, R/8}  |^2 \dd z + \int_{\Cc_{R/8}(z_0)} |\nabla \bar{v}|^2 \dd z,
 \end{split}
\end{align}
in order to write
\begin{align}
\label{triangle_2_2_2}
\begin{split}
\int_{\Cc_r(z_0)} |\nabla u -  (\nabla u )_{z_0, r} |^2 \dd z & \leq  \int_{\Cc_r(z_0)} |\nabla u -  (\nabla \bar{u} )_{z_0, r} |^2 \dd z \\
& \lesssim   \Big( \frac{r}{R} \Big)^{d+4}\int_{\Cc_{R}(z_0)} |\nabla u - (\nabla u)_{z_0,R}|^2 \dd z + R^{d+4} + \int_{\Cc_{R/8}(z_0)} |\nabla \bar{v}|^2 \dd z.
\end{split}
\end{align}

Just as in the proof of Theorem \ref{Theorem_1}, it now only remains to treat the term involving $\nabla \bar{v}$ on the right-hand side of \eqref{triangle_2_2_2}.  For this we use the same calculation as in \eqref{modulus_estimate_1}, but additionally that for any $ s \in \R_+$ we have $\omega(s) \leq c s^{\sigma}$ with $c \in \R_+$. Also, since the conditions of Theorem \ref{Theorem_1} are satisfied, for any $\alpha \in (0,1)$ we have that 
\begin{align*}
R^{-d} \int_{\Cc_R(z_0)} |\nabla u |^2 \dd z \lesssim R^{2\alpha}
\end{align*}
holds uniformly in a neighborhood of $z_0$. Combining these observations we find that 
\begin{align}
\label{modulus_estimate_1_2}
\begin{split}
  \int_{\Cc_{R/8}(z_0)} |\nabla \bar{v}|^2 \dd z & \lesssim    \Big( \int_{\Cc_{R/2}(z_0)}  |\nabla  u|^2 \dd z  + R^{d+4}\Big)  \omega \left( \fint_{\Cc_{R/2}(z_0)} | (u)_{z_0,R} - u|^2 \dd z \right)^{\frac{p-2}{p}} \\
  &\lesssim R^{d + 2\alpha}  R^{2\alpha \sigma \frac{p-2}{p} },
\end{split}
\end{align}
where we have additionally used Lemma \ref{Poincare} and that $R <1$. 

Together, \eqref{triangle_2_2_2} and \eqref{modulus_estimate_1_2} give that 
\begin{align}
\label{triangle_final_1}
\begin{split}
&\int_{\Cc_r(z_0)} |\nabla u -  (\nabla u )_{z_0, r} |^2 \dd z \lesssim   \Big( \frac{r}{R} \Big)^{d+4}\int_{\Cc_{R}(z_0)} |\nabla u - (\nabla u)_{z_0,R}|^2 \dd z + R^{d+4} +R^{d + 2\alpha + 2\alpha \sigma \frac{p-2}{p} }.
\end{split}
\end{align}
Using the same arguments as in Theorem \ref{Theorem_1}, choosing $\alpha$ close to 1, we see that this implies that $\nabla u$ is bounded in compact subsets of $\Lambda_0$.  

Using this boundedness, we revise the estimate used to handle the second term on the right-hand side of \eqref{triangle_2_2_2}. In particular, we replace \eqref{final_3} with 
\begin{align}
\label{final_3_2}
&- \int_{\Cc_{R/4}(z_0)}  \nabla \bar{v} :  h^{\prime\prime}_{\epsilon} ((u)_{z_0, R}) (A(  (u)_{z_0,R}) - A(u))  \nabla u   \dd z  \\
&  \lesssim  \gamma \int_{\Cc_{R/4}(z_0)}  |\nabla \bar{v}|^2 \dd z  + C(\gamma) \int_{\Cc_{R/4}(z_0)}   |\nabla  u|^2  \dd z \sup_{z_0^{\prime} \in \Cc_R(z_0)} | h^{\prime\prime}_{\epsilon} ((u)_{z_0, R}) (A(  (u)_{z_0,R}) - A(u(z_0^{\prime}))) |^2\\
& \lesssim \gamma \int_{\Cc_{R/4}(z_0)}  |\nabla \bar{v}|^2 \dd z  + C(\gamma) R^{d+2 + 2 \sigma},
\end{align}
which again holds uniformly in a neighborhood of $z_0$. Notice that here we have used a version of \eqref{Aug_3_1}. Combining this with \eqref{triangle_2_2_2} yields the desired result via the same arguments as in Theorem \ref{Theorem_1}.

\end{proof}

\section{Acknowledgements}
\selectlanguage{czech}
First, we would like to thank Angsar J\"ungel for helpful discussions concerning entropy methods and cross-diffusion systems; and, for making us aware of relevant literature. We would also like to thank Miroslav Bul\'{i}\v{c}ek for helpful comments during his visits to Vienna and also during the third author's time as a post-doc at the Charles University and the first and second authors' visits there. 


  \bibliographystyle{plain}
  \bibliography{regularity_cd}

\end{document}